\renewcommand{\epsilon}{\varepsilon}
\newcommand{\N}{{\mathbb N}}
\newcommand{\Z}{{\mathbb Z}}
\newcommand{\R}{{\mathbb{R}}}
\newcommand{\C}{{\mathbb C}}
\newcommand{\Vol}{\operatorname{Vol}}
\newcommand{\vol}{\operatorname{vol}}
\newcommand{\Cusp}{\operatorname{Cusp}}
\newcommand{\SL}{\operatorname{SL}(2,\R)}
\newcommand{\hol}{\operatorname{hol}}
\newcommand{\cyl}{\operatorname{cyl}}
\newcommand{\conf}{\operatorname{conf}}
\newcommand{\area}{\operatorname{area}}
\newcommand{\meanarea}{\operatorname{mean~area}}
\newcommand{\MeanArea}{\operatorname{mean~area~conf}}
\newcommand{\thick}{\operatorname{thick}}
\newcommand{\even}{\operatorname{even}}
\newcommand{\odd}{\operatorname{odd}}
\newcommand{\hyp}{\operatorname{hyp}}
\newcommand{\I}{\operatorname{I}}
\newcommand{\B}{\operatorname{B}}
\newcommand{\dd}{\mathrm{d}}
\newcommand{\cC}{{\mathcal C}}
\newcommand{\cH}{{\mathcal H}}
\newcommand{\ds}{\displaystyle}
\newtheorem{theorem}{Theorem}
\newtheorem{corollary}[theorem]{Corollary}
\newtheorem{lemma}[theorem]{Lemma}
\newtheorem{proposition}[theorem]{Proposition}
\theoremstyle{definition}% pas italique
\newtheorem*{remark}{Remark}
\newtheorem*{acknowledgement}{Acknowledgements}
{\bf}{\rm}
\newtheorem*{theocite}{Theorem}{\bf}{\it}
\begin{document}
\date{}
\title{Geometry of periodic regions on flat surfaces and associated Siegel--Veech constants}
\author[M.~Bauer]{Max Bauer}
\address[Max Bauer]{IRMAR,Universit\'e de Rennes 1, 35042 Rennes, France}
\email{maximilian.bauer@univ-rennes1.fr}
\author[E.~Goujard]{Elise Goujard}
\address[Elise Goujard]{IRMAR,Universit\'e de Rennes 1, 35042 Rennes, France}
\email{elise.goujard@univ-rennes1.fr}
\thanks{Both authors are partially supported by ANR ``GeoDyM''}
\maketitle

\keywords{flat surfaces, moduli spaces, abelian differentials, configurations, saddle connections, Siegel--Veech constants}

\begin{abstract}
An Abelian differential gives rise to a flat structure (translation surface)
on the underlying Riemann surface. 
In some directions the directional flow on the flat surface may contain a periodic region that is made up of maximal cylinders filled by parallel geodesics of the same length. 
The growth rate of the number of such regions counted with weights, as a function of the length, is quadratic with a coefficient, called Siegel--Veech constant, that is shared by almost all translation surfaces in the ambient stratum. 

We evaluate various Siegel--Veech constants associated to the geometry of configurations of periodic cylinders and their area, and study extremal properties of such configurations in a fixed stratum 
and in all strata of a fixed genus.
\end{abstract}%122 words

\section{Introduction}

\subsection{Statement of some known results}\label{sec:known-results}

Suppose that $M_{g,m}$ is a closed connected oriented surface $M_{g}$ of genus $g$ with a set of $m$ labelled marked points $\Sigma=\{P_1,\dots,P_m\}$. 

By a {\em translation surface} $S$ we mean a flat Riemannian metric and a parallel vector field on $M_{g,m}$. The metric has cone type singularities at all of the points $P_i$ of $\Sigma$ where the total angle is of the form $2\pi(d_i+1)$ for some integer $d_i\geq 1$. 

Each geodesic on a translation surface  moves in a constant direction, so geodesics do not have self intersections and a regular geodesic that connects a non-singular point to itself comes back with the same angle, so it is a periodic geodesic. A periodic geodesic is always part of a maximal connected periodic region: a maximal cylinder of parallel periodic geodesics of the same length. 
We refer to such a maximal cylinder as a {\em periodic cylinder} or, for short, a {\em cylinder}, and we say that the common  length  of the periodic geodesics that make up the cylinder is  the {\em width} of the cylinder. The number $N_{cyl}(S, \cC, L)$ of cylinders of width less than $L$ grows like $c\pi L^2$:
a first fundamental result of Masur \cite{Ma2,Ma3} states that  there exist two positive constants $c_1$ and $c_2$ such that 
$c_1\pi L^2\leq N_{cyl}(S, \cC, L)\leq c_2\pi L^2$. Using this fact, Eskin and Masur \cite{EM}
prove the deep theorem  that for almost every surface in a stratum there is an exact asymptotics 
$N_{cyl}(S, \cC, L)\sim c\pi L^2$. 
The constant $c$ is called a {\em Siegel--Veech constant}. The main tool to study Siegel--Veech constants is the method of Veech \cite{Ve3} that also showed the exact asymptotics in a more
general setting, but for a weaker form of convergence. 

The main object of this paper is the computation of various Siegel--Veech constants related to counting periodic regions in different ways.
Siegel--Veech constants are particularly interesting because they appear in various fields. 
In arithmetic, they give the asymptotics of the number of primitive points in certain lattices. 
In dynamics they are related to the sum of Lyapunov exponents for the Teichm\"uller geodesic flow in any invariant suborbifold of a stratum of Abelian differentials by the main formula of \cite{EKZ}. For a Teichm\"uller curve, they have an algebro-geometric interpretation involving degrees of certain line bundles given by a formula of Bouw-M\"oller \cite{BM}. 
Furthermore they are related to slopes of effective divisors and intersection theory in the  moduli space of curves: this aspect is studied by Chen (see \cite{C} for example) and Chen-M\"oller (see \cite{CM} for example).

By identifying $\R^2$ and $\C$, a translation surface inherits from $\C$ a complex structure on $M_g$ and a holomorphic  one form ({\em Abelian differential}) $\omega$.  A zero of $\omega$ of order $d$ corresponds to a conical singularity of angle $2\pi(d+1)$.
There is a one to one correspondence between translation surfaces endowed with a distinguished direction and Abelian differentials.
If we denote by $\alpha=(d_1,\ldots,d_m)$ the orders of the zeros of $\omega$  then we have $\sum d_i=2g-2$.

For a given  $\alpha=(d_1,\dots,d_m)$ such that $\sum d_i=2g-2$ and $d_i\geq 1$, for $i=1,\dots,m$, we consider the stratum  $\cH(\alpha)$ of the moduli space of Abelian differentials on $M_{g,m}$ that have zeros at the points of $\Sigma$ of orders $(d_1,\dots,d_m)$, or equivalently the moduli space of translation surfaces $S$ with singularities at  the points of $\Sigma$ of angles $(2\pi(d_1+1),\dots,2\pi(d_m+1))$. 
The dimension of $\cH(\alpha)$ is $\dim_\C\cH(\alpha)=2g+m-1$. 
The stratum $\cH(\alpha)$ may be non connected \cite{Ve2} but contains at most three connected components \cite{KZ}.
The stratum $\cH(\alpha)$ admits a volume element \cite{Ma1,Ve1} that induces a finite $SL(2, \R)$-invariant measure on the hyperspace $\cH_1(\alpha)$ of area one surfaces in $\cH(\alpha)$.
The volumes of  the connected components of the strata of Abelian differentials were effectively computed by A.~Eskin and A.~Okounkov \cite{EO}.

The boundary of a periodic cylinder contains singularities. Generically, each boundary component contains exactly one singularity so it is a {\em closed saddle connection}, i.e. a geodesic that joins a singularity to itself (and contains no other singularity).

Consider a translation surface $S$. For each positive real number $L>0$, denote by $N_{\cyl}(S,L)$ the number of periodic cylinders in $S$ of width at most  $L$ and by $N_{\area}(S,L)$ the total area of these cylinders.
It was shown in~\cite{EM} that 

\begin{theocite}[Eskin--Masur]
Let $\cH(\alpha)$ be a stratum, where $\alpha=(d_1,\dots,d_m)$ with $d_i\geq 1$, for $i=1,\dots,m$.
For every  connected component $K$ of  $\cH_1(\alpha)$, there exist constants  $c_{\cyl}(K)$ and $c_{\area}(K)$  such that for almost every translation surface $S$ in $K$ one has
\[\lim_{L\rightarrow \infty}\frac{N_{\cyl}(S,L)}{\pi L^2}=c_{\cyl}(K)\qquad
\lim_{L\rightarrow \infty}\frac{N_{\area}(S,L)}{\pi L^2}=c_{\area}(K).\]
\end{theocite}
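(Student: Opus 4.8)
The plan is to follow the method of Veech \cite{Ve3} and of Eskin--Masur \cite{EM}: introduce the \emph{Siegel--Veech transform}, prove an exact integration formula that isolates a candidate constant, and then upgrade this averaged statement to a pointwise asymptotic for $\mu$-almost every surface by an ergodic-theoretic argument, where $\mu$ denotes the $SL(2,\R)$-invariant measure on $\cH_1(\alpha)$.

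First I would set up the transform. To a translation surface $S$ one associates the discrete set $V(S)\subset\R^2$ of holonomy vectors of core curves of periodic cylinders, each vector $v$ carried with a weight $w(v)$ equal to $1$ (for the $N_{\cyl}$ count) or to the area of the corresponding cylinder (for the $N_{\area}$ count). For $f\in C_c(\R^2)$ set $\hat f(S)=\sum_{v\in V(S)}w(v)f(v)$. Masur's quadratic bounds $c_1\pi L^2\le N_{\cyl}(S,L)\le c_2\pi L^2$ from \cite{Ma2,Ma3}, together with the fact that each cylinder has area at most the total area $1$, show that $\hat f$ is finite for every $S$ and, more importantly, that $\hat f\in L^1(\cH_1(\alpha),\mu)$, with a bound on $\mu\{\hat f>R\}$ obtained by tracking the contribution of short cylinders in the thin part of the stratum. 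The construction is $SL(2,\R)$-equivariant, $\widehat{f\circ g}(S)=\hat f(gS)$, and weights are preserved by $SL(2,\R)$ (area because $\det=1$). Since $\mu$ is $SL(2,\R)$-invariant and finite, the functional $f\mapsto\int_{K}\hat f\,d\mu$ is a positive $SL(2,\R)$-invariant linear functional on $C_c(\R^2\setminus\{0\})$; as $SL(2,\R)$ acts transitively on $\R^2\setminus\{0\}$ with unimodular (unipotent) stabilizer, hence preserving a unique invariant Radon measure there, namely Lebesgue measure, this functional equals $c(K)\int_{\R^2}f$. This defines the constants $c_{\cyl}(K)$ and $c_{\area}(K)$.

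Next I would reduce the sharp counting asymptotic to an equidistribution statement. Writing $g_s=\mathrm{diag}(e^{s/2},e^{-s/2})$ and $r_\theta$ for the rotation subgroup, the Cartan decomposition lets one express the weighted number of vectors of $V(S)$ of length at most $e^{t/2}$, for a suitably chosen fixed box $f$, as an average of $\hat f$ over the piece $\{r_\theta g_s:0\le s\le t,\ \theta\in[0,2\pi)\}$ of the group applied to $S$: concretely $N(S,e^{t/2})$ is, up to a fixed constant factor and lower-order boundary terms, equal to $\frac{1}{2\pi}\int_0^{2\pi}\!\!\int_0^t\hat f(r_\theta g_s S)\,ds\,d\theta$. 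Thus it suffices to show that for $\mu$-a.e.\ $S$ the circle averages $F(s):=\frac{1}{2\pi}\int_0^{2\pi}\hat f(r_\theta g_s S)\,d\theta$ satisfy both $\frac1t\int_0^t F(s)\,ds\to\int_K\hat f\,d\mu$ and $F(s)\to\int_K\hat f\,d\mu$ as $s\to\infty$, so that the passage between the integer and real parameters and from smoothed to sharp counts goes through.

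The main obstacle is precisely this equidistribution step, because $\hat f$ is unbounded and $K$ is non-compact. The Cesàro convergence of $\frac1t\int_0^t F(s)\,ds$ follows from the Birkhoff ergodic theorem applied to the $g_s$-flow, using $\hat f\in L^1(\mu)$ and ergodicity of the $SL(2,\R)$-action on the connected component $K$. Upgrading this to convergence of $F(s)$ itself is where Masur's bounds re-enter, in quantitative form: one proves a moment estimate $\int_K(\hat f)^{1+\epsilon}\,d\mu<\infty$, equivalently a tail bound $\mu\{\hat f\ge R\}\ll R^{-1-\epsilon}$, by estimating how deep into the cusp a surface must lie in order to carry many short periodic cylinders, and then combines it with mixing of $g_s$ and a Borel--Cantelli argument along the sequence $s=n$ to control the deviations $F(n)-\int_K\hat f\,d\mu$; continuity in $s$ between consecutive integers handles the remaining values, and Masur's two-sided linear bound controls the oscillation of $N(S,R)/\pi R^2$ between $R=e^{n/2}$ and $R=e^{(n+1)/2}$. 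Finally, approximating the indicator of the relevant region from inside and outside by functions in $C_c(\R^2\setminus\{0\})$ and invoking the exactness of the integration formula shows that $N_{\cyl}(S,L)/\pi L^2$ and $N_{\area}(S,L)/\pi L^2$ converge, for $\mu$-a.e.\ $S\in K$, to the constants $c_{\cyl}(K)$ and $c_{\area}(K)$ produced above.
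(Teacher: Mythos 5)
The first thing to note is that the paper contains no proof of this statement: it is quoted as a theorem of Eskin--Masur \cite{EM} (with the earlier $L^1$-convergence version due to Veech \cite{Ve3}), and the only related work done in the paper itself is the verification, in the notation section, that the weighted counting functions used later satisfy the conditions (A), (B), (C) under which the results of \cite{EM} apply. So your proposal has to be measured against the proof in \cite{EM}, and in outline you do follow that route: Siegel--Veech transform, the positive $\SL$-invariant functional on $C_c(\R^2\setminus\{0\})$ being a multiple of Lebesgue measure, equidistribution of large circles for almost every surface, and a sandwiching argument to pass to the sharp count.

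However, two steps as you have written them would fail. First, the reduction of counting to group averages is misstated: with the order $r_\theta g_s$ the norm $|r_\theta g_s v|=|g_s v|$ does not depend on $\theta$, so the circle average merely replaces $f$ by its radial average and does no equidistribution work; and even with the correct order $g_s r_\theta$, the double integral $\int_0^t\!\int_0^{2\pi}\hat f(g_s r_\theta S)\,\dd\theta\,\dd s$ gives each holonomy vector $v$ a total $(s,\theta)$-weight of order $|v|^{-2}$, so when $N_{\cyl}(S,R)\asymp R^2$ it grows like $t$, whereas $N_{\cyl}(S,e^{t/2})\asymp e^{t}$; at best this recovers the constant in a logarithmically averaged (Ces\`aro) sense, i.e.\ something close to Veech's weaker theorem, not the pointwise asymptotics. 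The key lemma of \cite{EM} is different: for suitably chosen (trapezoidal) $f$, the single circle average $\frac{1}{2\pi}\int_0^{2\pi}\hat f(g_t r_\theta S)\,\dd\theta$ at time $t$ is comparable, with upper and lower test functions, to $e^{-t}N(S,e^{t/2})$, and the whole difficulty is the almost-everywhere convergence of these circle averages. Second, you attribute the integrability of $\hat f$ to Masur's quadratic bounds, but the constant $c_2(S)$ there degenerates in the thin part of the stratum; the statements $\hat f\in L^1(\mu)$ and the tail bound $\mu\{\hat f\ge R\}\ll R^{-1-\epsilon}$ (condition (C) of the paper) are the technical heart of \cite{EM}, proved by a detailed analysis of how many short saddle connections and cylinders a unit-area surface can carry, and they are needed already to define the functional $f\mapsto\int_K\hat f\,\dd\mu$, not only for the Borel--Cantelli upgrade. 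Finally, controlling $N(S,R)/\pi R^2$ between the scales $e^{n/2}$ and $e^{(n+1)/2}$ by Masur's two-sided bound only pins the ratio within a bounded factor; to conclude one needs equidistribution along real times, or along $\delta$-spaced times with $\delta\to 0$ together with slightly enlarged and shrunk test functions.
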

The {\em Siegel--Veech constants} $c_{\cyl}(K)$ and $c_{\area}(K)$ only depend on $K$. An earlier version of this result (in a more general setting) where convergence is replaced by convergence in $L^1$ was proved in~\cite{Ve3}.

The results in~\cite{EM,Ve3}  assure the existence of quadratic asymptotics (Siegel--Veech constants) if one counts cylinders with weights (under certain conditions).  
The existence of all the Siegel--Veech constants we consider in this paper is justified by these results (see section~\ref{sec:notation}).
The function $N_{area}(S,L)$ for example counts cylinders with weight the area of the cylinder.

One can also only count  cylinders with sufficiently big area: for $x\in [0,1)$, denote by $N_{\cyl,A\ge x}(S,L)$ the number of cylinders in $S$ of width at most $L$ and of area at least $x$. We denote the corresponding Siegel--Veech constant by $c_{\cyl,A\ge x}(K)$.
It is shown in~\cite{Vo} that

\begin{theocite}[Vorobets~\cite{Vo}]\label{th:Vorobets} 
Let $\cH(\alpha)$ be a stratum, where $\alpha=(d_1,\dots,d_m)$ with $d_i\geq 1$, for $i=1,\dots,m$. Then for any connected component $K$ of  $\cH_1(\alpha)$
\begin{align*}
\mathrm{(a)}\quad &c_{\meanarea}(K)=\frac{c_{\area}(K)}{c_{\cyl}(K)}=
\frac{1}{2g+m-2}=\frac{1}{\dim_{\C} \cH(\alpha)-1}\\
\mathrm{(b)}\quad &\frac{c_{\cyl,A\ge x}(K)}{c_{\cyl}(K)}=(1-x)^{2g+m-3}=(1-x)^{\dim_{\C} \cH(\alpha)-2}.
\end{align*}
\end{theocite}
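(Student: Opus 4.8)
The plan is to deduce part~(a) from part~(b) by an elementary integration and to prove part~(b) by a Siegel--Veech computation over the principal boundary. For part~(a): for any area-one surface $S$ and any $L>0$, summing over periodic cylinders $C$ of width at most $L$,
\[
N_{\area}(S,L)=\sum_C\area(C)=\sum_C\int_0^1\mathbf 1_{\{\area(C)\ge x\}}\,\dd x=\int_0^1 N_{\cyl,A\ge x}(S,L)\,\dd x .
\]
Dividing by $\pi L^2$ and letting $L\to\infty$ --- dominated convergence in $x$ being legitimate since $N_{\cyl,A\ge x}(S,L)\le N_{\cyl}(S,L)$ and $N_{\cyl}(S,L)/\pi L^2$ converges for a.e.\ $S$ --- gives $c_{\area}(K)=\int_0^1 c_{\cyl,A\ge x}(K)\,\dd x$, which by~(b) equals $c_{\cyl}(K)\int_0^1(1-x)^{2g+m-3}\,\dd x=c_{\cyl}(K)/(2g+m-2)$; since $c_{\meanarea}(K)=c_{\area}(K)/c_{\cyl}(K)$ and $\dim_{\C}\cH(\alpha)-1=2g+m-2$, part~(a) follows. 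Thus the whole content lies in~(b).

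\emph{Reduction and set-up for (b).} Put $d=\dim_{\C}\cH(\alpha)=2g+m-1$. A periodic cylinder on $S$, together with its core curve and its bounding saddle connections, has one of finitely many topological \emph{configurations} $\cC$, so $c_{\cyl}=\sum_{\cC}c_{\cyl}^{\cC}$ and $c_{\cyl,A\ge x}=\sum_{\cC}c_{\cyl,A\ge x}^{\cC}$ for the corresponding per-configuration Siegel--Veech constants, whose existence is ensured by~\cite{EM,Ve3}; it therefore suffices to prove $c_{\cyl,A\ge x}^{\cC}=(1-x)^{d-2}c_{\cyl}^{\cC}$ for one fixed $\cC$. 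Here one invokes the description of the thin part of $\cH(\alpha)$ in which a $\cC$-cylinder is pinched (the principal boundary of Eskin--Masur--Zorich): the core curve has nonzero holonomy, hence is homologically nontrivial, so pinching it degenerates $S$ to a surface $\check S$ in a boundary stratum $\cH(\alpha')$ of complex dimension $d-2$ carrying a finite $\SL$-invariant measure, and $S$ is reconstructed from $\check S$ together with the cylinder data --- the core holonomy $\vec w\in\R^2$ (small), a twist $\tau$ ranging over the circle $\R/|\vec w|\Z$, and a height $h>0$. The cylinder then has core holonomy $\vec w$ and area $|\vec w|\,h$, and in period coordinates
\[
\dd\mu_{\cH(\alpha)}=\mathrm{const}\cdot\dd\mu_{\cH(\alpha')}(\check S)\,\dd\vec w\,\dd\tau\,\dd h,\qquad \area(S)=\area(\check S)+|\vec w|\,h .
\]

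\emph{The computation.} Fix a compactly supported bump $f$ on $\R^2$ and set $\Phi(S)=\sum f\!\big(\vec w_C/\sqrt{\area S}\big)$, the sum over $\cC$-cylinders $C$ of $S$ with $\area(C)/\area(S)\ge x$; $\Phi$ is scale invariant, so for $\phi$ with $\int_0^\infty\phi(t)t^{d-1}\,\dd t=1$ the cone relation $\dd\mu_{\cH(\alpha)}=t^{d-1}\,\dd t\,\dd\mu_1$ gives $\int_{\cH_1(\alpha)}\Phi\,\dd\mu_1=\int_{\cH(\alpha)}\phi(\area S)\,\Phi(S)\,\dd\mu(S)$, the left-hand side being $c_{\cyl,A\ge x}^{\cC}\,\mu_1(\cH_1(\alpha))\int_{\R^2}f$ by Veech's formula. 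Substituting the product description: the $\tau$-integral yields a factor $|\vec w|$; writing $\check S=\sqrt b\,\check S_1$ with $b=\area(\check S)$, $\check S_1\in\cH_1(\alpha')$, and putting $\vec w=\sqrt b\,\vec\omega$, $h=\sqrt b\,\eta$, one gets $\area(S)=b(1+|\vec\omega|\eta)$, the whole $b$-dependence factors through $\phi$ with $\int_0^\infty b^{d-1}\phi\!\big(b(1+|\vec\omega|\eta)\big)\,\dd b=(1+|\vec\omega|\eta)^{-d}$, the area constraint becomes $1+|\vec\omega|\eta\ge 1/(1-x)$, and after $\sigma=(1+|\vec\omega|\eta)^{-1/2}$ the $\eta$-integral turns into $2\int_0^{\sqrt{1-x}}\sigma^{2d-3}f(\sigma\vec\omega)\,\dd\sigma$. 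Nothing now depends on $\check S_1$, so the $\cH_1(\alpha')$-integral contributes $\mu_1(\cH_1(\alpha'))$, and with $\int_{\R^2}f(\sigma\vec\omega)\,\dd\vec\omega=\sigma^{-2}\int_{\R^2}f$ and $\int_0^{\sqrt{1-x}}\sigma^{2d-5}\,\dd\sigma=(1-x)^{d-2}/(2d-4)$ (here $d\ge 4$, since $g\ge 2$) one reaches
\[
c_{\cyl,A\ge x}^{\cC}=\mathrm{const}\cdot\frac{\mu_1(\cH_1(\alpha'))}{\mu_1(\cH_1(\alpha))}\,(1-x)^{d-2}.
\]
The value at $x=0$ is $c_{\cyl}^{\cC}$, so $c_{\cyl,A\ge x}^{\cC}=(1-x)^{d-2}c_{\cyl}^{\cC}$; summing over $\cC$ proves~(b).

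\emph{Main obstacle.} Everything after the product description is routine manipulation of the cone structure and of the exact Siegel--Veech formula; the real work is that description --- identifying $\cH(\alpha')$, checking that its complex dimension is exactly $d-2$, that the twist lives on a circle of length $|\vec w|$ (which is what keeps the fibre integral over the twist finite and supplies the factor $|\vec w|$), and that the period measure factors as a product up to a purely combinatorial constant (labellings of the two cylinder boundaries, covolumes of the relevant integer lattices). Those combinatorial constants all cancel in the ratio $c_{\cyl,A\ge x}^{\cC}/c_{\cyl}^{\cC}$, which is precisely why the exponent $d-2$ emerges cleanly and independently of $\cC$; the hypothesis $d_i\ge 1$ enters only through~\cite{EM,Ve3}, to guarantee that the per-configuration constants exist and sum to $c_{\cyl}$ and $c_{\cyl,A\ge x}$.
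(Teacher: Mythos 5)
Your reduction of (a) to (b) via the layer-cake identity $N_{\area}(S,L)=\int_0^1 N_{\cyl,A\ge x}(S,L)\,\dd x$ is fine (the paper instead obtains (a) from the computation of $c_{\area^p}(K,\cC)$ and Corollary~\ref{cor:Vorobets-p}), and your computation for (b) is essentially the paper's integral written in different coordinates --- but as written it only covers topological types $\cC$ with a single cylinder, and that is a genuine gap. Your structural claim is that pinching the chosen cylinder degenerates $S$ to a surface $\check S$ in a boundary stratum of complex dimension $d-2$ carrying finite measure, with $S$ recovered from $\check S$ plus the data of that one cylinder (holonomy $w$, one twist, one height). For a generic surface, however, the two saddle connections bounding the chosen cylinder sit inside a maximal family of homologous saddle connections, and this family may bound $q\ge 2$ cylinders and cut $S$ into several pieces. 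All these cylinders share the same width $|w|$, so when your cylinder is short they are all short simultaneously: the complement of the single cylinder is itself degenerate (it contains saddle connections of length $|w|$) and does not range over a finite-measure stratum of dimension $d-2$. The correct product structure, which is what Eskin--Masur--Zorich provide and what the paper uses, has principal boundary of complex dimension $n=\dim_\C\cH(\alpha)-q-1$ together with $q$ heights and $q$ twists; your parametrization counts the degrees of freedom correctly only when $q=1$.

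This matters because the exponent $d-2$ is not simply ``the dimension of the boundary stratum''. In the paper's proof of Theorem~\ref{th:Vorobets:constant} the constraint that the chosen cylinder have area at least $x$ cuts the simplex of heights and produces an extra factor $(1-a)^{q-1}$, and the exponent arises as $n+(q-1)=d-2$ only after evaluating the modified integral $I'_x=(1-x)^{n+q-1}\,\B(n,q)/\bigl(4(n+q+1)\bigr)$ of equation~\eqref{eq:Veech}. Your argument therefore proves $c^{\cC}_{\cyl,A\ge x}=(1-x)^{d-2}c^{\cC}_{\cyl}$ only for configurations with $q(\cC)=1$, and the summation over $\cC$ needed for Vorobets' statement is incomplete for strata admitting multi-cylinder configurations (which is the general case). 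To close the gap you must either redo the computation with the full configuration data --- heights $(h_1,\dots,h_{q-1})$ ranging over a cone, the first height constrained, as in section~\ref{sec:onecylindersp} --- or give a separate argument for why the ratio is independent of $q$; that independence is precisely the nontrivial content of the per-configuration refinement, not something your single-cylinder degeneration can deliver for free.
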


The ratio $c_{\meanarea}(K)$ of part a) of the theorem can be interpreted as the (asymptotic) mean area of a cylinder on a generic surface in $K$ in the following sense: for almost any surface  $M$ in $K$ one has $\displaystyle \frac{c_{\area}(K)}{c_{\cyl}(K)}=\lim_{L\rightarrow\infty}\frac{N_{\area}(M,L)}{N_{\cyl}(M,L)}$.

Part b) of the theorem is an answer to a question of Veech in~\cite{Ve3} where the author asks if there is a simple formula for  $\cfrac{c_{\cyl,A\ge x}(K)}{c_{\cyl,A\ge 0}(K)}$ (which is the same as 
$\cfrac{c_{\cyl,A\ge x}(K)}{c_{cyl}(K)}$).

As a by-product of our results using the methods from~\cite{EMZ} we get an alternative proof of the theorem of Vorobets by evaluating an explicit integral that is a simplified version of the integral 
used in~\cite{EMZ}. 
 
The methods from~\cite{EMZ} allow for the computation of various other Siegel--Veech constants associated to counting cylinders for more specific data that we allude to next.
 
It might happen that  the geodesic flow in a given direction on $M$ contains several (maximal) periodic cylinders in that direction. 
Generically, this only happens if the boundary saddle connections of the cylinders are homologous. The boundary saddle connections might be part of a larger family of homologous saddle connections, where the extra saddle connections do not bound a cylinder. We refer to such a family as a {\em configuration of homologous saddle connections} or simply as a {\em configuration}. A topological representation of such configurations can be obtained by taking blocks of surfaces as in figures~\ref{fig:type I}, \ref{fig:type II}, and~\ref{fig:type III} (the surfaces are drawn as tori, but might have arbitrary genus), arranging them in a cyclic order and then identifying the boundary components.

Note that the fact that the saddle connections are homologous implies that they are all of the same length. Call this the {\em length} of the configuration. It also implies that a configuration persists under small deformations of the translation structure.
 
One says that two configurations of homologous saddle connections correspond to the same {\em topological type} if the saddle connections are based at the same singularities, if the complementary regions are of the same topological type and have the same number and type of singularities, e.t.c. In particular, they always have the same number of complementary cylinders.  The length of the configuration is the common width of the cylinders coming from the configuration.
 
For each connected component $K$ of a stratum there are only  a finite number of {\em admissible} topological types of configurations, i.e. topological types of configurations that  are realized on  at least one surface in $K$. In fact almost all surfaces share the same admissible topological types. We will consider only configurations of this special type. Their distinguishing feature is that they persist under any small deformation. For related counting problems in the case of general periodic components (so not configurations), see \cite{N,L}.
 
For $S$ in $K$, denote by $N_{\conf}(S,\cC,L)$ the number of configurations of homologous saddle connections on $S$ of type $\cC$ and whose length is at most $L$.

\begin{theocite}[Eskin--Masur--Zorich, \cite{EMZ}]
For a given connected component $K$ of a stratum $\cH_1(\alpha)$ and an admissible topological type $\cC$ of configurations there exists a Siegel--Veech constant $c_{\conf}(K,\cC)$ such that for almost any surface $S$ in $K$ one has
\[\lim_{L\rightarrow \infty}\frac{N_{\conf}(S,\cC,L)}{\pi L^2}=c_{\conf}(K,\cC).\]
\end{theocite}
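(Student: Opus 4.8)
I would deduce the statement from the general Siegel--Veech machinery of \cite{EM,Ve3}, in the form adapted to configurations in \cite{EMZ}. The plan has three stages: package the count as a \emph{Siegel--Veech transform}, prove an averaged identity over $K$ together with an integrability estimate for the transform, and then upgrade the average to an almost-everywhere statement using ergodicity of the Teichm\"uller flow. For the first stage, observe that since $\cC$ is an admissible topological type that persists under small deformations, the configurations of type $\cC$ on a surface $S\in K$ are cut out locally by period coordinates, so the set $V_\cC(S)\subset\R^2\setminus\{0\}$ of holonomy vectors of their common boundary saddle connections is discrete, of locally constant cardinality, and varies $\mu$-measurably with $S$. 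For $\varphi\in C_c(\R^2)$ set $\widehat\varphi_\cC(S)=\sum_{v\in V_\cC(S)}\varphi(v)$; when $\varphi$ is the indicator $\chi_L$ of the disk $B_L$ of radius $L$ about the origin one has $\widehat\varphi_\cC(S)=N_{\conf}(S,\cC,L)$. From $V_\cC(gS)=g\cdot V_\cC(S)$ for $g\in\SL$ one gets the equivariance $\widehat\varphi_\cC(gS)=\widehat{(\varphi\circ g)}_\cC(S)$.

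For the second stage, consider the functional $\Lambda(\varphi)=\int_K\widehat\varphi_\cC\,d\mu$. Granting for the moment that it is finite on $C_c(\R^2)$, it is a positive Radon measure on $\R^2$, and it is $\SL$-invariant by the equivariance just noted together with the $\SL$-invariance of $\mu$; since every $\SL$-invariant Radon measure on $\R^2$ is a non-negative combination of Lebesgue measure and the Dirac mass at the origin, and $0\notin V_\cC(S)$ because saddle connections have positive length, we obtain $\Lambda(\varphi)=c_{\conf}(K,\cC)\int_{\R^2}\varphi\,dx\,dy$ for some constant $c_{\conf}(K,\cC)\ge 0$. By equivariance and $\SL$-invariance of $\mu$, finiteness of $\Lambda$ on all of $C_c$ follows from finiteness on a single ball, covered by finitely many $\SL$-images of a small test region, so everything reduces to showing $\widehat{\varphi}_\cC\in L^1(K,\mu)$ for $\varphi=\chi_1$; for the next stage one in fact needs the stronger $\widehat{\varphi}_\cC\in L^p(K,\mu)$ for some $p>1$. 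This amounts to bounding, uniformly enough, the number of configurations of type $\cC$ of bounded length on $S$ as $S$ approaches the thin part of $K$ where some saddle connection becomes short. Here one invokes Masur's quadratic upper bound for cylinder counts together with the Eskin--Masur estimates controlling, in period coordinates, the measure of the locus of surfaces carrying many short saddle connections; since a configuration of type $\cC$ is a fixed combinatorial arrangement of a bounded number of mutually homologous saddle connections, those estimates apply to it directly. I expect this integrability bound --- essentially the technical core of \cite{EM} and \cite{EMZ} --- to be the main obstacle.

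For the last stage I would run the Eskin--Masur argument. The Teichm\"uller geodesic flow $g_t=\mathrm{diag}(e^{t},e^{-t})$ is ergodic on $(K,\mu)$, so by the Birkhoff ergodic theorem --- legitimate because $\widehat\varphi_\cC\in L^1$ --- the time averages $\frac1T\int_0^T\widehat\varphi_\cC(g_tS)\,dt$ converge for $\mu$-almost every $S$ to $\int_K\widehat\varphi_\cC\,d\mu=c_{\conf}(K,\cC)\int_{\R^2}\varphi$, and similarly for averages over the circle $\{r_\theta:\theta\in[0,2\pi)\}$. For $\varphi=\chi_L$ the value $\widehat\varphi_\cC(g_tr_\theta S)$ counts the configurations on $S$ whose holonomy lies in the ellipse $r_{-\theta}g_{-t}B_L$; bracketing a round disk between finite unions of $\SL$-images of a fixed test region, using that $L\mapsto N_{\conf}(S,\cC,L)$ is non-decreasing, and using the $L^p$-bound with $p>1$ to make the resulting error terms negligible, one converts the convergence of these averages into the pointwise asymptotics $N_{\conf}(S,\cC,L)/(\pi L^2)\to c_{\conf}(K,\cC)$, which is the assertion of the theorem.
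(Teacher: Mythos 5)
Your proposal is correct in outline and follows essentially the same route the paper relies on: the paper does not reprove this statement but checks its conditions (A)--(C) --- $\SL$-equivariance of the set of weighted holonomy vectors, a uniform quadratic upper bound, and $L^{1+\epsilon}$ integrability, obtained by bounding configuration counts by saddle-connection counts with bounded weights --- and then cites the Eskin--Masur/Veech theorem, whose internal proof (Siegel--Veech transform, classification of $\SL$-invariant Radon measures on $\R^2$, ergodic averaging along $g_t r_\theta$) is exactly what you sketch. The one small imprecision is attributing the almost-everywhere convergence of the circle averages to Birkhoff: that step is not a plain Birkhoff application but the Eskin--Masur argument combining ergodicity of the geodesic flow with the $L^{1+\epsilon}$ bound, which you implicitly acknowledge by insisting on $p>1$.
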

The authors of \cite{EMZ} give a method to compute these Siegel--Veech constants.

\subsection{Statement of results}\label{sec:results}
We denote by $N_{\cyl}(S,\cC,L)$ the number of cylinders 
of length less than $L$ coming  from a configuration of type $\cC$. For
a real number $p\ge 0$ we denote by
$N_{\area^p}(S,\cC,L)$ the sum of the $p$-th power of the area of each of these cylinders.
We denote the corresponding Siegel--Veech constants by 
$c_{\cyl}(K,\cC)$, resp.  $c_{\area^p}(K,\cC)$.
For $p=1$ we write $c_{\area}(K,\cC)$.
Note that if $\cC$ comes with $q$ cylinders then $c_{\cyl}(K,\cC)=qc_{\conf}(K,\cC)$.

It follows from the general result of~\cite{EM} that there is a Siegel--Veech constant 
$c_{\area^p}(K,\cC)$ such that for almost any surface $S$ in $K$ one has
\[\lim_{L\rightarrow \infty}\frac{N_{\area^p}(S,\cC,L)}{\pi L^2}=c_{\area^p}(K,\cC).\]

The methods from~\cite{EMZ}  can be applied to compute $c_{\area^p}(K,\cC)$ in a 
way similar to the computation of $c_{\conf}(K,\cC)$. The expression for 
$c_{\area^p}(K,\cC)$ contains a  constant $M$ that depends  
only on combinatorial data such as the dimension of the stratum, the order 
of the singularities and the possible symmetries. It is given by an explicit formula 
in \S~13.3. of \cite{EMZ}. It also contains the
 ``principal boundary stratum'' $\cH_1(\alpha')$ determined by $K$ and $\cC$
  (the stratum of possibly disconnected surfaces we get by contracting the closed 
 saddle connections of the configuration). The constant $n$ always denotes 
 the complex dimension of $\cH_1(\alpha')$. If  $\cC$ comes with $q$ cylinders 
 then we have $n=\dim_\C \cH(\alpha)-q-1$. 

With this notation we show in section~\ref{sec:mean-area},
\begin{theorem}\label{th:area-p} Given a real number $p\geq 0$.
Let $\cH(\alpha)$ be a stratum, where $\alpha=(d_1,\dots,d_m)$ and $d_i\geq 1$, for $i=1,\dots,m$.
Let $K$ be a connected component of  $\cH_1(\alpha)$ and $\cC$ an admissible topological type of configuration containing $q\geq 1$ cylinders. Then
\begin{align*}
c_{\area^p}(K,\cC)&=M \cdot\frac{\Vol(\cH_1(\alpha'))}{\Vol(K)}\cdot 
   \frac{(n-1)!}{(p+1)\cdot (p+2) \cdots (p+q+n-1)}\cdot q\\ 
c_{\cyl}(K,\cC)
      &=M \cdot\frac{\Vol(\cH_1(\alpha'))}{\Vol(K)}\cdot \frac{(n-1)!}{(n+q-1)!}\cdot q\\
c_{area}(K,\cC)&=
M\cdot \frac{\Vol(\cH_1(\alpha'))}{\Vol(\cH_1(\alpha))}\cdot \frac{(n-1)!}{(n+q)!}\cdot q
\end{align*}
where $n=\dim_\C \cH(\alpha)-q-1$. $M$
denotes the combinatorial constant given in \S~13.3. of \cite{EMZ} and $\cH_1(\alpha')$ denotes the principal boundary stratum.
\end{theorem}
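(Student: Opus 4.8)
The plan is to run the Eskin--Masur--Zorich method of \cite{EMZ} for computing Siegel--Veech constants, carrying the extra weight along: where the computation of $c_{\conf}(K,\cC)$ counts each configuration with weight $1$, the computation of $c_{\area^p}(K,\cC)$ counts a configuration $\gamma$ with weight $\sum_{i=1}^{q}\big(\area C_i(\gamma)\big)^{p}$, the $C_i(\gamma)$ being its $q$ cylinders. By the Siegel--Veech formula of \cite{EM,Ve3} together with the description of the principal boundary in \cite{EMZ}, one has
\[
c_{\area^p}(K,\cC)=\lim_{\epsilon\to0}\frac{1}{\pi\epsilon^{2}\,\Vol(K)}\int_{K}\ \sum_{\gamma}\ \sum_{i=1}^{q}\big(\area C_i(\gamma)\big)^{p}\ \dd\nu_{1},
\]
where $\nu_1$ is the $\SL$--invariant measure on $K$ and $\gamma$ runs over $\cC$--configurations of length at most $\epsilon$. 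For small $\epsilon$ this integral is concentrated on the $\epsilon$--thin part attached to $\cC$, which by \cite{EMZ} is, up to lower-order terms and up to the symmetry bookkeeping packaged in the combinatorial constant $M$ of \S13.3 of \cite{EMZ}, a bundle over the principal boundary stratum $\cH(\alpha')$: a point of it is a surface $S'\in\cH(\alpha')$, a common holonomy vector $v\in\R^{2}$ with $|v|\le\epsilon$, and, for each of the $q$ cylinders, a height $h_i>0$ and a twist $t_i\in[0,|v|)$, subject to the single area constraint $\area(S')+|v|\sum_i h_i=1$; in these coordinates the ambient measure pulls back to $M\,\dd\vol_{\cH(\alpha')}(S')\,\dd v\,\prod_{i=1}^{q}\dd h_i\,\dd t_i$, and $\area C_i=|v|\,h_i$.

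I would then evaluate this integral directly. Integrating out the $q$ twists produces $|v|^{q}$; writing $\dd v=|v|\,\dd|v|\,\dd\theta$ and integrating $\theta$ over $[0,2\pi)$ gives $2\pi|v|\,\dd|v|$; and the substitution $u_i:=|v|\,h_i=\area C_i$ turns $\prod_i\dd h_i$ into $|v|^{-q}\prod_i\dd u_i$, so that the powers of $|v|$ coming from the fibre cancel and $\int_0^{\epsilon}|v|\,\dd|v|=\epsilon^{2}/2$, with the $2\pi$ from the angular variable, reproduces exactly the factor $\pi\epsilon^2$ that cancels the prefactor. The area constraint leaves $S'$ ranging over the area-$\big(1-\sum_i u_i\big)$ surfaces in $\cH(\alpha')$, whose conditional measure has total mass a universal constant times $\big(1-\sum_i u_i\big)^{\,n-1}\Vol\big(\cH_1(\alpha')\big)$, with $n=\dim_\C\cH(\alpha)-q-1$ the complex dimension of the principal boundary; these universal constants recombine just as they do in the $c_{\conf}$ computation of \cite{EMZ}. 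What survives is a Dirichlet integral over the simplex $\{u_i>0,\ \sum_i u_i\le1\}$: by its symmetry in $u_1,\dots,u_q$ and the Dirichlet/Beta identity,
\[
\int_{\{u_i>0,\ \sum_i u_i\le1\}}\!\Big(\sum_{i=1}^{q}u_i^{\,p}\Big)\big(1-\textstyle\sum_i u_i\big)^{n-1}\,\dd u_1\cdots\dd u_q
\;=\;q\cdot\frac{\Gamma(p+1)\,\Gamma(n)}{\Gamma(p+q+n)}
\;=\;q\cdot\frac{(n-1)!}{(p+1)(p+2)\cdots(p+q+n-1)}.
\]
Collecting the factors $M$, $\Vol\big(\cH_1(\alpha')\big)$, $1/\Vol(K)$ and this quantity gives the stated formula for $c_{\area^p}(K,\cC)$. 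Since $N_{\cyl}(S,\cC,L)=N_{\area^{0}}(S,\cC,L)$, the formula for $c_{\cyl}(K,\cC)$ is the case $p=0$, in which the product telescopes to $(n+q-1)!$; and, by definition, $c_{\area}(K,\cC)=c_{\area^{1}}(K,\cC)$ is the case $p=1$.

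The only real difficulty is one I would not reprove but import from \cite{EMZ}: the identification of the $\epsilon$--thin part with the bundle over the principal boundary described above, the precise value of the symmetry constant $M$, and the normalizing factors relating $\dd\vol_{\cH(\alpha)}$ to $\nu_1$ and to the conditional measures on the area slices---all of which are already established in \cite{EMZ} and used there to compute $c_{\conf}$. The genuinely new input is small and clean: inserting the weight $\sum_i(\area C_i)^{p}$ replaces the constant integrand of the $c_{\conf}$ computation by $\sum_i u_i^{\,p}$ on the simplex, and the resulting Dirichlet integral has a closed form for every real $p\ge0$. One must only verify, along the way, that the coupling of base and fibre through the single area constraint $\area(S')+|v|\sum_i h_i=1$ is treated exactly as in \cite{EMZ}, so that the exponent $n-1$---and hence the factor $\Gamma(n)=(n-1)!$---comes out as stated.
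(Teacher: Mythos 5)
Your proposal is correct, and at the strategic level it is the same Eskin--Masur--Zorich computation as in the paper: evaluate the thin-part integral of equation~\eqref{Siegel--Veech}/\eqref{initial-integral} with the extra weight inserted, importing from \cite{EMZ} the parametrization of $K^{\thick}(\epsilon,\cC)$ by the principal boundary, the constant $M$, and the $o(\epsilon^2)$ control. Where you genuinely differ is in how the integral is organized. The paper works in the cone over $K^{\thick}(\epsilon,\cC)$, writing a surface as $sS_1'$ plus $tT_1$; in those coordinates the weight couples fibre and radial variables, $(\area C_1)^p=(h_1w)^p\bigl(t^2/(s^2+t^2)\bigr)^p$, so the computation splits into the cusp integral of Lemma~\ref{lem:cusp-p} (itself done via a second cone construction), the Beta integral $J_p$ in polar $(s,t)$ coordinates, the cone-to-hypersurface dimension factor $2(n+q+1)$, and a final multiplication by $q$ after computing $c_{\area_1^p}$ for one named cylinder. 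You instead stay on the area-one slice and pass to the actual cylinder areas $u_i=|v|h_i$, so that the weight $\sum_i u_i^p$ lives only on the simplex $\{u_i>0,\ \sum_i u_i\le 1\}$ and everything collapses to a single Dirichlet integral, with the factor $q$ coming from symmetry of that integral; this is cleaner (it in effect exhibits the limiting joint law of the cylinder areas as a Dirichlet distribution with density proportional to $(1-\sum_i u_i)^{n-1}$, from which Theorems~\ref{th:area-p-conf}, \ref{th:Vorobets:constant} and \ref{th:big-area} would also drop out) and it avoids the $\epsilon'=\epsilon\sqrt{s^2+t^2}/t$ bookkeeping. The only step you assert rather than verify is the normalization: passing from $\dd\nu$ to the hypersurface measure on $K$ contributes a factor $2$ (since $\dd\nu=\tfrac12 a^{n+q}\,\dd a\,\dd\vol$ near $a=1$), while the conditional mass of the area-$(1-\sum_i u_i)$ slice of $\cH(\alpha')$ contributes $\tfrac12(1-\sum_i u_i)^{n-1}\Vol(\cH_1(\alpha'))$ (since $\dd\nu(S')=\tfrac12 a'^{\,n-1}\dd a'\,\dd\vol(S_1')$); these two constants cancel, so your claim that $2\pi\cdot\epsilon^2/2$ is the entire prefactor is correct -- and in any case your observation that the weight enters only through the simplex integral lets you bypass absolute normalizations by taking the ratio with the weight-one ($c_{\conf}$) computation of \cite{EMZ}, which is exactly your $p=0$ case. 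One cosmetic remark: your $p=1$ specialization produces $\Vol(K)$ in the denominator, whereas the theorem's third display writes $\Vol(\cH_1(\alpha))$; these agree only for connected strata, and the discrepancy is a slip in the statement rather than in your argument.
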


Note that $c_{\area}(K,\cC)=c_{\area^1}(K,\cC)$ and $c_{\cyl}(K,\cC)=c_{\area^0}(K,\cC)$, so the second and third equation of the previous theorem follow from the first.

\begin{remark} 
\begin{enumerate}
\item Evaluation of $c_{area^p}$ is motivated by the question of M.~M\"oller related to the study of quasimodular properties of the related counting function. 
\item We recall that the saddle connections in a configuration 
 of given type $\cC$ can be named. Choose and fix one of the 
 saddle connections that bounds a cylinder. In the proof of
theorem~\ref{th:area-p} we show that if we only consider the  area of this cylinder
 then we get the same formulas for $c_{\area^p}(K,\cC)$ and $c_{\area}(K,\cC)$  except that the factor $q$ is missing.
 \end{enumerate}
 \end{remark}

We get as a corollary:

\begin{corollary}\label{cor:mean-area-p}
Given a real number $p\ge 0$ and let $\cH(\alpha)$ be a stratum of Abelian differentials on a surface $M_{g,m}$, 
where $\alpha=(d_1,\dots,d_m)$ with $d_i\geq 1$, for $i=1,\dots,m$.
Then for any connected component 
$K$ of  $\cH_1(\alpha)$ and any admissible  type $\cC$ of configuration containing at least one cylinder,
\[
c_{\meanarea^p}(K,\cC)=
\cfrac{c_{\area^p}(K,\cC)}{c_{\cyl}(K,\cC)}=
\cfrac{(d-2)!}{(p+1)\cdot (p+2)\cdots (p+d-2)},
\]
where $d=\dim_{\C} \cH(\alpha)=2g+m-1$.
\end{corollary}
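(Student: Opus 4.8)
The plan is to deduce the corollary directly from Theorem~\ref{th:area-p} by forming the quotient of its first two formulas; no new geometric input is required. Dividing $c_{\area^p}(K,\cC)$ by $c_{\cyl}(K,\cC)$, the combinatorial constant $M$, the volume ratio $\Vol(\cH_1(\alpha'))/\Vol(K)$, the factor $(n-1)!$ attached to the principal boundary stratum, and the multiplicity $q$ of cylinders all cancel, leaving
\[
\frac{c_{\area^p}(K,\cC)}{c_{\cyl}(K,\cC)}=\frac{(n+q-1)!}{(p+1)(p+2)\cdots(p+q+n-1)}.
\]
The next step is purely arithmetic: substitute $n=\dim_\C\cH(\alpha)-q-1=d-q-1$, so that $n+q-1=d-2$ and $p+q+n-1=p+d-2$. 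Both sides then acquire the claimed closed form $(d-2)!/\bigl((p+1)(p+2)\cdots(p+d-2)\bigr)$; in particular the right-hand side depends neither on $q$ nor on the specific configuration type $\cC$, only on $d=2g+m-1$.

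As a consistency check I would verify the two degenerate values of $p$. For $p=0$ the denominator is $1\cdot 2\cdots(d-2)=(d-2)!$, so the ratio is $1$, matching $c_{\area^0}(K,\cC)=c_{\cyl}(K,\cC)$. For $p=1$ the denominator is $2\cdot 3\cdots(d-1)=(d-1)!$, so the ratio is $1/(d-1)=1/(2g+m-2)$, which recovers part~(a) of Vorobets' theorem, now refined to each admissible configuration type rather than only to the whole component. It is also worth recording the interpretation of $c_{\meanarea^p}(K,\cC)$ as an asymptotic average, exactly as in the discussion following the statement of Vorobets' theorem: for almost every $S$ in $K$ one has
\[
\frac{c_{\area^p}(K,\cC)}{c_{\cyl}(K,\cC)}=\lim_{L\to\infty}\frac{N_{\area^p}(S,\cC,L)}{N_{\cyl}(S,\cC,L)},
\]
that is, it is the limiting mean of the $p$-th powers of the areas of the cylinders of type $\cC$ of width at most $L$.

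There is essentially no obstacle here: all the work is contained in Theorem~\ref{th:area-p}, and the only point requiring care is the bookkeeping of the number of factors in the products. The denominator of $c_{\area^p}(K,\cC)$ runs over $q+n-1=d-2$ consecutive integers starting at $p+1$, which is precisely what makes the cancellation against $(n+q-1)!=(d-2)!$ produce the stated formula; any off-by-one in counting those factors would break the identity. Hence the proof reduces to a two-line computation once Theorem~\ref{th:area-p} is available.
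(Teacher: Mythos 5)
Your proposal is correct and is exactly the paper's (implicit) argument: the corollary is obtained by dividing the first two formulas of Theorem~\ref{th:area-p}, cancelling $M$, the volume ratio, $(n-1)!$ and $q$, and substituting $n+q-1=d-2$, $p+q+n-1=p+d-2$. Your sanity checks at $p=0$ and $p=1$ (recovering Vorobets' part (a)) match the remarks surrounding the statement in the paper.
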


\begin{remark} 
(a) The quotient of the corollary can be interpreted as the 
 (asymptotic)
mean area  of a cylinder coming
from a configuration of type $\cC$, where the area is counted with a power $p$.

(b) For  a natural number  $p\geq 1$ we obtain
\[ c_{\meanarea^p}(K,\cC)=
 \frac{1}{{p+d-2\choose p}}.
  \]

\end{remark}

Define $N_{\area^p}(S,L)$ in the same way as we defined $N_{\area}(S,L)$ in 
section~\ref{sec:known-results}, except that  the area of each cylinders is counted with a
power $p$. If $c_{\area^p}(K)$ denotes the corresponding Siegel-Veech constant, then
we have
\[ c_{\cyl}(K)=\sum_{\cC} c_{\cyl}(K,\cC)\quad\text{and}\quad 
   c_{\area^p}(K)=\sum_{\cC} c_{\area^p}(K,\cC),\]
where the sum is taken over all admissible topological types of configurations for $K$
with at least one cylinder. This implies

\begin{corollary}\label{cor:Vorobets-p} 
Given a real number $p\ge 0$ and let $\cH(\alpha)$ be a stratum, 
where $\alpha=(d_1,\dots,d_m)$ with $d_i\geq 1$, for $i=1,\dots,m$. 
Then for any connected component $K$ of  $\cH_1(\alpha)$
\[c_{\meanarea^p}(K)=\frac{c_{\area^p}(K)}{c_{\cyl}(K)}=
\cfrac{(d-2)!}{(p+1)\cdot (p+2)\cdots (p+d-2)},
\]
where $d=\dim_{\C} \cH(\alpha)=2g+m-1$.
\end{corollary}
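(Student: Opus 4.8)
The plan is to reduce the statement entirely to Corollary~\ref{cor:mean-area-p} together with the additivity of the Siegel--Veech constants over admissible configuration types. The key observation is that the ratio appearing in Corollary~\ref{cor:mean-area-p},
\[
R_p \;:=\; \frac{(d-2)!}{(p+1)(p+2)\cdots(p+d-2)},
\]
does \emph{not} depend on the configuration type $\cC$: it is governed only by $p$ and by $d=\dim_\C\cH(\alpha)$. Hence, for every admissible topological type $\cC$ carrying at least one cylinder, Corollary~\ref{cor:mean-area-p} can be rewritten as the per-configuration identity $c_{\area^p}(K,\cC)=R_p\cdot c_{\cyl}(K,\cC)$.

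Next I would invoke the two summation formulas stated just before the corollary, namely $c_{\cyl}(K)=\sum_{\cC} c_{\cyl}(K,\cC)$ and $c_{\area^p}(K)=\sum_{\cC} c_{\area^p}(K,\cC)$, where both sums run over the finite set of admissible topological types of configurations for $K$ that contain at least one cylinder. Substituting the per-configuration identity and pulling the common constant $R_p$ out of the (finite) sum gives
\[
c_{\area^p}(K)=\sum_{\cC} c_{\area^p}(K,\cC)=R_p\sum_{\cC} c_{\cyl}(K,\cC)=R_p\cdot c_{\cyl}(K),
\]
which is exactly the asserted value of $c_{\meanarea^p}(K)=c_{\area^p}(K)/c_{\cyl}(K)$.

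The only point needing a word of justification is that the denominator $c_{\cyl}(K)$ is nonzero, so that the quotient is well defined and the index set of the sum is nonempty. This follows from Masur's quadratic lower bound recalled in \S~\ref{sec:known-results}: the inequality $N_{\cyl}(S,\cC,L)\geq c_1\pi L^2$ forces $c_{\cyl}(K)>0$, so at least one admissible configuration type carries a cylinder. I do not expect any genuine obstacle; the entire analytic content of the corollary is carried by Theorem~\ref{th:area-p} through Corollary~\ref{cor:mean-area-p}, and the present statement is merely the remark that the mean-area ratio is a stratum invariant, independent of how the cylinders are distributed among the various configuration types.
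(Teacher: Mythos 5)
Your argument is correct and is exactly the one the paper uses: the ratio from Corollary~\ref{cor:mean-area-p} is independent of the configuration type $\cC$, so the summation formulas $c_{\cyl}(K)=\sum_{\cC} c_{\cyl}(K,\cC)$ and $c_{\area^p}(K)=\sum_{\cC} c_{\area^p}(K,\cC)$ immediately give the stated value of $c_{\meanarea^p}(K)$. Your additional remark that $c_{\cyl}(K)>0$ (via Masur's quadratic lower bound) is a harmless, correct extra justification that the paper leaves implicit.
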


For $p=1$, corollary~\ref{cor:Vorobets-p} becomes part (a)  of the theorem of 
Vorobets as stated in section~\ref{sec:known-results}.
Corollary~\ref{cor:mean-area-p}
gives more detailed
information as corollary~\ref{cor:Vorobets-p}.
For example, 
the mean area $c_{\meanarea^p}(K,\cC)$  of a cylinder coming from a 
configuration of type $\cC$
does not depend on the number of cylinders. This means in particular that the mean area of a cylinder 
is the same if the cylinder makes up the whole periodic region of a configuration or
if the periodic region is made up of several cylinders. 

As a variation of the above, we  denote by $N_{\area^p,\conf}(S,\cC,L)$ the 
$p$-th power of the
total area of the periodic region (union of the cylinders) on $S$ coming from a 
configuration of topological
type $\cC$ whose length is at most $L$. The corresponding Siegel-Veech constant
is denoted by $c_{\area^p,\conf}(K,\cC)$. We show in section~\ref{sec:mean-area-conf}

\begin{theorem}\label{th:area-p-conf}
Given a real number $p\geq 0$ and let $\cH(\alpha)$ be a stratum, where $\alpha=(d_1,\dots,d_m)$ and $d_i\geq 1$, for $i=1,\dots,m$.
Let $K$ be a connected component of  $\cH_1(\alpha)$ and $\cC$ an admissible topological type of configuration containing $q\geq 1$ cylinders. Then
\begin{align*}
\mathrm{(a)}\quad c_{\area^p,\conf}(K,\cC)&=
 M \cdot\frac{\Vol(\cH_1(\alpha'))}{\Vol(K)}\cdot \frac{(n-1)!}{(q-1)!}\frac{1}{(p+q)\cdots (p+q+n-1)}\\ 
\mathrm{(b)}\quad\cfrac{c_{\area^p,\conf}(K,\cC)}{c_{\conf}(K,\cC)}&=
\frac{q(q+1)\cdots (q+n-1)}{(p+q)(p+q+1)\cdots (p+q+n-1)}.
\end{align*}
 $M$ denotes the combinatorial constant given in \S~13.3. of \cite{EMZ} and 
 $\cH_1(\alpha')$ denotes the principal boundary stratum.
 \end{theorem}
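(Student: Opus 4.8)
The plan is to run the Eskin--Masur--Zorich counting argument exactly as in the proof of Theorem~\ref{th:area-p}, changing only the weight carried by each configuration. Recall that in a neighbourhood of the principal boundary a surface of $K$ bearing a configuration of type $\cC$ is encoded by a point $S'$ of the boundary stratum $\cH(\alpha')$ (of complex dimension $n$), the common holonomy vector $v\in\R^2$ of the $q$ cylinders (of width $w=|v|$), their heights $h_1,\dots,h_q>0$, and their twist parameters $t_1,\dots,t_q\in[0,w)$; the area of the glued surface is $\area(S')+w(h_1+\dots+h_q)$, so after normalising the total area to $1$ the area of the whole periodic region equals $w\sum_i h_i=1-\area(S')$. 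Up to the combinatorial constant $M$ of~\cite{EMZ}, the natural volume form in these coordinates is the product of the volume form of $\cH(\alpha')$ with $\dd v\,\dd h_1\cdots\dd h_q\,\dd t_1\cdots\dd t_q$. The only change with respect to Theorem~\ref{th:area-p} is that a configuration must now be counted with the weight $\bigl(w\sum_i h_i\bigr)^{p}$ rather than $\sum_i (wh_i)^{p}$.

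I would then carry out the same reductions as in the proof of Theorem~\ref{th:area-p}: integrating out the twists contributes a factor $w^{q}$; integrating $S'$ over the slice $\{\area(S')=a\}$ of $\cH(\alpha')$ contributes $a^{n-1}\Vol(\cH_1(\alpha'))$; and writing $\dd v$ in polar coordinates turns the holonomy integral into $\int_0^{2\pi}\int_0^{L} w\,\dd w\,\dd\theta$ when one counts configurations of width at most $L$. After the substitution $u_i=wh_i$ (so that $\prod_i\dd h_i=w^{-q}\prod_i\dd u_i$ and $w\sum_i h_i=\sum_i u_i$), all dependence on $w$ leaves the $h$-integral, the angular and radial integrals produce $2\pi\cdot\tfrac12 L^{2}=\pi L^{2}$, which cancels the normalising factor $1/(\pi L^{2})$, and one is left with
\[
c_{\area^{p},\conf}(K,\cC)=M\cdot\frac{\Vol(\cH_1(\alpha'))}{\Vol(K)}\cdot J_p,\qquad
J_p=\int_{\{u_i>0,\ u_1+\dots+u_q<1\}}\Bigl(\sum_{i=1}^{q}u_i\Bigr)^{p}\Bigl(1-\sum_{i=1}^{q}u_i\Bigr)^{n-1}\dd u_1\cdots\dd u_q ,
\]
which is the analogue of the integral of Theorem~\ref{th:area-p} with the weight $(\sum_i u_i)^{p}$ in place of $\sum_i u_i^{p}$.

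Next I would evaluate $J_p$ by setting $s=u_1+\dots+u_q$ and using that the simplex slice $\{u_i>0,\ \sum_i u_i=s\}$ carries the density $s^{q-1}/(q-1)!$ with respect to $s$:
\[
J_p=\frac{1}{(q-1)!}\int_0^1 s^{p+q-1}(1-s)^{n-1}\,\dd s
=\frac{1}{(q-1)!}\cdot\frac{\Gamma(p+q)\,\Gamma(n)}{\Gamma(p+q+n)}
=\frac{(n-1)!}{(q-1)!}\cdot\frac{1}{(p+q)(p+q+1)\cdots(p+q+n-1)} ,
\]
which is precisely formula~(a); at $p=0$ this collapses to $M\cdot\Vol(\cH_1(\alpha'))/\Vol(K)\cdot(n-1)!/(n+q-1)!=c_{\conf}(K,\cC)$, as it should. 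Part~(b) is then purely algebraic: dividing~(a) by $c_{\conf}(K,\cC)=c_{\cyl}(K,\cC)/q=M\cdot\Vol(\cH_1(\alpha'))/\Vol(K)\cdot(n-1)!/(n+q-1)!$ (using $c_{\cyl}(K,\cC)=q\,c_{\conf}(K,\cC)$ and Theorem~\ref{th:area-p}), the factors $M\cdot\Vol(\cH_1(\alpha'))/\Vol(K)$ and $(n-1)!$ cancel, leaving $\frac{q(q+1)\cdots(q+n-1)}{(p+q)(p+q+1)\cdots(p+q+n-1)}$.

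I expect the only genuinely delicate point to be the one already settled in the proof of Theorem~\ref{th:area-p}: verifying that the Eskin--Masur--Zorich reduction does compute the Siegel--Veech constant attached to this weight --- that is, that uniformly over the relevant neighbourhood of the principal boundary the area of the periodic region of a configuration equals the model quantity $w\sum_i h_i=1-\area(S')$, and that this weight is tame enough (bounded, continuous) for the convergence arguments of~\cite{EM,EMZ} to apply verbatim. Granting this, the computation reduces to the elementary Dirichlet/Beta integral above and requires no analytic input beyond~\cite{EM,EMZ}.
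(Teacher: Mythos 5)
Your proposal is correct and follows essentially the same route as the paper: the Eskin--Masur--Zorich principal-boundary reduction with each configuration weighted by the $p$-th power of the total area of its periodic region, ending in the Beta integral $\B(n,p+q)$ which produces the stated product $(p+q)\cdots(p+q+n-1)$, and part (b) by dividing by the $p=0$ value. The only difference is organizational: the paper decomposes $S$ into $sS_1'$ and $tT_1$, reuses Lemma~\ref{lem:cusp-p} at $p=0$ and the integral $J_p$ of~\eqref{eq:area-p}, whereas you integrate directly in the coordinates $u_i=wh_i$ and collapse the simplex via the Dirichlet density $s^{q-1}/(q-1)!$; your somewhat loosely tracked constant prefactor (cone-versus-hypersurface factors, the area-dependent cutoff on $w$) is in any case $p$-independent and is pinned by your $p=0$ consistency check against $c_{\conf}(K,\cC)$, so the argument stands.
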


\begin{remark}
(a)
 For a natural number $p\geq 1$ we obtain
\[
\cfrac{c_{\area^p,\conf}(K,\cC)}{c_{\conf}(K,\cC)}=
\frac{q\cdot (q+1)\cdots (q+p-1)}{(d-1)\cdot d\cdots(d+p-2)},\]
where $d=\dim_\C \cH(\alpha)-1$.

(b) The quotient in part (b) of the preceeding theorem can be 
interpreted as the asymptotic mean area of the periodic part (taking the $p$-th power
of the area). For $p=1$ we get $qc_{\meanarea}(K,\cC)$, which is consistent,
as  $c_{\meanarea}(K,\cC)$
is the mean area of a cylinder.

(c)  For $q=1$ we have $c_{\area^p,conf}(K,\cC)=c_{\area^p}(K,\cC)$.
\end{remark}
 
 We next count configurations with cylinders of large area.
We recall that the saddle connections in a configuration of given type $\cC$ can be named. Choose 
and fix one of the saddle connections that bound a cylinder and call this the first cylinder. 
Given $x\in [0,1)$. Denote by $N_{\conf,A_1\geq x}(S,\cC,L)$ the number of configurations of type  $\cC$ of length at most  $L$  and such that the area of the first cylinder is at least $x$ (of the area one surface $S$). We denote by $c_{\conf,A_1\geq x}(K,\cC)$ the corresponding Siegel--Veech constant. Note that we have $c_{\conf,A_1\geq 0}(K,\cC)=c_{\conf}(K,\cC)$.
We show in section~\ref{sec:onecylindersp}

\begin{theorem}\label{th:Vorobets:constant}
Given $x\in [0,1)$. Let $\cH(\alpha)$ be a stratum of Abelian differentials on a surface $M_{g,m}$, where $\alpha=(d_1,\dots,d_m)$ with $d_i\geq 1$, for $i=1,\dots,m$. Then for any connected component $K$ of  $\cH_1(\alpha)$ and any admissible  topological type $\cC$ of configuration for $K$ containing at least one cylinder,
\[\frac{c_{\conf,A_1\geq x}(K,\cC)}{c_{\conf}(K,\cC)}=(1-x)^{2g+m-3}=(1-x)^{\dim_\C \cH(\alpha)-2}.\] 
 \end{theorem}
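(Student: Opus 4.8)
The plan is to mimic the Eskin--Masur--Zorich computation of $c_{\conf}(K,\cC)$, but to carry along the extra constraint that the area of the first cylinder be at least $x$. Recall that in the method of \cite{EMZ}, the Siegel--Veech constant $c_{\conf}(K,\cC)$ is obtained by integrating a suitable function over a neighbourhood of the principal boundary stratum $\cH_1(\alpha')$; the local coordinates near this boundary split into coordinates on $\cH_1(\alpha')$ and, for each of the $q$ cylinders, a pair $(h_i,t_i)$ recording its height and twist (with the width normalised to be small), so that the area contributed by the $i$-th cylinder is $w\,h_i$ for the common width $w$. After the change of variables that rescales so the total surface has area one, the relevant integral reduces to an integral over the simplex $\{h_i>0,\ \sum_{i=1}^q h_i + (\text{area of the }\alpha'\text{-part}) = 1\}$ times the volume form on $\cH_1(\alpha')$, and carrying out that integral is exactly what produces the factor $(n-1)!/(n+q-1)!$ appearing in Theorem~\ref{th:area-p}.

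First I would isolate the part of that computation that depends on the first cylinder. The constraint "area of the first cylinder $\ge x$" becomes, after the area-one rescaling, the constraint $h_1 \ge x$ on the height parameter of the first cylinder (the width is common to all cylinders and is integrated out against $L$ to give the $\pi L^2$). So the only change relative to the computation of $c_{\conf}(K,\cC)$ is that one integrates the same integrand over the region $\{h_1\ge x,\ h_i>0\ (i\ge 2),\ \sum h_i \le 1\}$ inside the standard simplex, with the remaining $n$ "area" directions (the $\cH_1(\alpha')$-part carries $n$ real area-type directions in the relevant slice) filling up the rest. Concretely, after integrating out everything except the $h_i$ and the $\alpha'$-area variable, one is left with a ratio of two Dirichlet-type integrals:
\[
\frac{c_{\conf,A_1\ge x}(K,\cC)}{c_{\conf}(K,\cC)}
=\frac{\ds\int_{\Delta_x} h_1^{0}\,h_2^{0}\cdots h_q^{0}\,(1-\textstyle\sum h_i)^{n-1}\,\dd h_1\cdots \dd h_q}
{\ds\int_{\Delta_0} h_1^{0}\cdots h_q^{0}\,(1-\textstyle\sum h_i)^{n-1}\,\dd h_1\cdots \dd h_q},
\]
where $\Delta_x=\{h_1\ge x,\ h_i\ge 0,\ \sum h_i\le 1\}$. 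Substituting $h_1 = x + (1-x)u_1$ and $h_i=(1-x)u_i$ for $i\ge2$ rescales the region $\Delta_x$ onto $\Delta_0$ and pulls out a Jacobian factor $(1-x)^q$ together with a factor $(1-x)^{n-1}$ from $(1-\sum h_i) = (1-x)(1-\sum u_i)$; since the integrand has no powers of the $h_i$ themselves (each cylinder's area enters linearly only through the counting weight, which here is just "$1$" for counting configurations), the two integrals over $\Delta_0$ are identical and cancel, leaving $(1-x)^{q+n-1}$. Finally, $q+n-1 = q + (\dim_\C\cH(\alpha)-q-1) - 1 = \dim_\C\cH(\alpha)-2 = 2g+m-3$, which is the claimed exponent.

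The main obstacle, and the step that needs the most care, is justifying that the constraint on the first cylinder's area really does translate, in the \cite{EMZ} local coordinates after area-one normalisation, into the clean linear constraint $h_1\ge x$ with no further interaction with the width $w$ or the $SL(2,\R)$-unfolding — i.e. that the "first cylinder" can be singled out coherently across the configuration (which is why we invoke the fact, recalled in the statement, that saddle connections in a configuration can be named) and that the area of the rest of the surface, not just the other $q-1$ cylinders, is correctly accounted for by the remaining $n$ directions in the simplex. Once the bookkeeping of \S~13 of \cite{EMZ} is set up as in the proof of Theorem~\ref{th:area-p} — which we have already done there — this reduces to the elementary change of variables above, and in fact one can read the result off directly from Theorem~\ref{th:area-p-conf}(a) by noting that $c_{\conf,A_1\ge x}$ differs from $c_{\conf}$ by precisely the ratio of a truncated Dirichlet integral to the full one, with the same parameters that produce the $\beta$-function factors there. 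For $p=1$ and $\cC$ the one-cylinder configuration this recovers part (b) of Vorobets' theorem, as it must.
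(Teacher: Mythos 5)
Your proposal is correct and follows essentially the same route as the paper: both restrict the EMZ principal-boundary parametrization by the constraint on the first cylinder's area and evaluate the resulting Beta-type integral, the only difference being that you organize it as a single Dirichlet integral in normalized area coordinates (with the affine substitution $h_1=x+(1-x)u_1$, $h_i=(1-x)u_i$), whereas the paper keeps the cone coordinates $(s,t)$, gets the extra factor $(1-a)^{q-1}$ with $a=x\frac{s^2+t^2}{t^2}$ in the cusp volume, and integrates in polar coordinates to obtain $I_x'=\frac{(1-x)^{n+q-1}}{4(n+q+1)}\B(n,q)$ and the ratio $I_x'/I_0'$. The joint density $(1-\sum h_i)^{n-1}$ you assert for the normalized cylinder areas is exactly what that explicit computation supplies (the $s^{2n-1}\,\dd s$ factor is the $(A')^{n-1}$ density of the area of the $\cH(\alpha')$-part), so the step you flag as the main obstacle is precisely the paper's $(s,t)$-bookkeeping from the proof of Theorem~\ref{th:area-p} and introduces no new difficulty.
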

 
 Summing over all configurations we get as a corollary part (b) of the theorem of Vorobets
 but our result contains more detailed information.
 
We next count configurations with periodic regions of large area. Let $\cC$ be a topological type of configuration that comes with $q\geq 1$ cylinders.
 For $x\in [0,1)$, denote by $N_{\conf,A\geq x}(S,\cC,L)$ the number of configurations  on $S$ of type 
 $\cC$ of length at most $L$ and such that the total area of the $q$ cylinders is at least $x$ (of the area one surface $S$).  We denote the corresponding Siegel--Veech constant by $c_{\conf,A\geq x}(K,\cC)$.
 
 The  {\em incomplete Beta function} $B(t;a,b)$ and the  \emph{regularized incomplete Beta function} $\I(t;a,b)$ are defined  for $t\in [0,1]$ by
\[B(t;a,b)=\int_0^tu^{a-1}(1-u)^{b-1} \dd u,\qquad
 \I(t;a,b)=\frac{B(t;a,b)}{B(1;a,b)}=\frac{B(t;a,b)}{B(a,b)}.\]
For more details about the incomplete Beta function see  section~\ref{sec:toolbox}: Figure~\ref{fig:graphesbis} represents the density function  $\frac{d}{dt}\I(t;a,b)$ for various values for $a$ and $b$.

With this notation we show in section~\ref{sec:allcylindersp}:

\begin{theorem}\label{th:big-area}
Given $x\in [0,1)$, and let $\cH(\alpha)$ be a stratum, where $\alpha=(d_1,\dots,d_m)$ with $d_i\geq 1$, for $i=1,\dots,m$. Suppose that $K$ is a connected component of  $\cH_1(\alpha)$ and that $\cC$ is an admissible topological type  of configurations for $K$ containing exactly $q$ cylinders.
\[
\frac{c_{\conf,A\geq x}(K,\cC)}{c_{\conf}(K,\cC)}=\I(1-x;n,q)=
   (1-x)^{n}\sum_{k=0}^{q-1}\binom{n-1+k}{k}x^k,\]
where $n=\dim_\C \cH(\alpha)-q-1=\dim_\C \cH(\alpha')$.
\end{theorem}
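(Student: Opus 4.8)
The plan is to follow the Eskin--Masur--Zorich machinery exactly as in the proof of Theorem~\ref{th:area-p}, tracking the additional constraint on the total cylinder area. Recall that in that computation the Siegel--Veech constant $c_{\conf}(K,\cC)$ is obtained, after the unfolding of the principal boundary, as an integral over $\cH_1(\alpha')\times\Delta$, where $\Delta$ is the simplex $\{(h_1,w_1,\dots,h_q,w_q)\}$ parametrizing the heights and widths of the $q$ cylinders glued along the configuration, subject to $\sum h_j w_j\le 1$ (area-one normalization) together with the length normalization $w_j\le 1$; the relevant measure on the cylinder parameters is $\prod_j h_j\,\dd h_j\,\dd w_j$ (one factor $h_j$ from the cylinder twist, one from the Jacobian of the change of variables to the boundary stratum), which after integrating out the widths reduces to the measure $\prod_j h_j\,\dd h_j$ on $\{\sum h_j\le 1\}$. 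The only change here is that instead of integrating over the full simplex we restrict to the subset where the total area $\sum_j h_j w_j \ge x$. First I would write $c_{\conf,A\ge x}(K,\cC)$ as the same integral with this extra inequality, so that the ratio $c_{\conf,A\ge x}(K,\cC)/c_{\conf}(K,\cC)$ equals the ratio of the two simplex integrals, and everything combinatorial ($M$, the volumes, the factor $q$) cancels.

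Next I would carry out the width integrations. For fixed heights $h_1,\dots,h_q$, each $w_j$ ranges over $[0,1]$, and the area constraint couples them; but here is where I expect the key simplification: one changes variables from $(w_j)$ to the partial area variables, or, more cleanly, one first integrates over the $h_j$ using the homogeneity of the constraint $\sum h_j w_j\le 1$ versus $\ge x$. Concretely, set $a_j=h_jw_j$; for fixed widths the map $h_j\mapsto a_j$ has Jacobian $\prod 1/w_j$ and the measure $\prod h_j\,\dd h_j$ becomes $\prod (a_j/w_j^2)\,\dd a_j$. Integrating the widths out of $\prod_j (1/w_j^2)$ over $[0,1]^q$ diverges, so instead the right order is: integrate the heights first, against $\prod h_j\,\dd h_j$, over the region $\{h_j\ge 0,\ \sum h_jw_j\le 1\}$ and over $\{\sum h_jw_j\ge x\}$ respectively. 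Rescaling $h_j=t_j/w_j$ turns both into integrals of $\prod t_j\,\dd t_j / \prod w_j^2$ over $\{\sum t_j\le 1\}$ and $\{x\le \sum t_j\le 1\}$, and now the $w_j$-dependence factors out completely and cancels in the ratio. So the ratio equals
\[
\frac{\displaystyle\int_{\{t_j\ge 0,\ x\le \sum t_j\le 1\}}\ t_1\cdots t_q\ \dd t_1\cdots \dd t_q}
     {\displaystyle\int_{\{t_j\ge 0,\ \sum t_j\le 1\}}\ t_1\cdots t_q\ \dd t_1\cdots \dd t_q},
\]
which is a purely elementary quantity. Wait — this is not quite right, because the heights also enter through the boundary stratum coordinates, contributing the power $n=\dim_\C\cH(\alpha')$; the correct statement is that after integrating over $\cH_1(\alpha')$ with its $SL(2,\R)$-invariant measure one picks up an extra factor that is homogeneous of degree $2n$ in the "remaining area" $1-\sum t_j$, i.e. the full integrand is $t_1\cdots t_q\,(1-\sum t_j)^{n-1}$ over the simplex, exactly as in Theorem~\ref{th:area-p}. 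I would make this precise by citing the corresponding step of the EMZ computation reproduced in section~\ref{sec:mean-area}.

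Granting that, the proof reduces to the Dirichlet-type integral identity
\[
\frac{\displaystyle\int_{\{t_j\ge 0,\ \sum t_j\ge x,\ \sum t_j\le 1\}} t_1\cdots t_q\,\bigl(1-\textstyle\sum t_j\bigr)^{n-1}\,\dd t}
     {\displaystyle\int_{\{t_j\ge 0,\ \sum t_j\le 1\}} t_1\cdots t_q\,\bigl(1-\sum t_j\bigr)^{n-1}\,\dd t}
= \I(1-x;n,q).
\]
To prove it I would substitute $s=\sum_{j=1}^q t_j$: integrating the $t_j$ over the scaled simplex $\{\sum t_j=s\}$ against $\prod t_j$ contributes $s^{2q-1}$ times a constant (a Dirichlet integral with all exponents equal to $2$), so both numerator and denominator collapse to one-dimensional integrals $\int s^{2q-1}(1-s)^{n-1}\,\dd s$ over $[0,1-x]$ and $[0,1]$ respectively. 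Hmm, the exponents do not match $\I(1-x;n,q)=B(1-x;n,q)/B(n,q)$ with integrand $u^{n-1}(1-u)^{q-1}$; the discrepancy is resolved by the substitution $u=1-s$, which sends $s^{2q-1}(1-s)^{n-1}\,\dd s$ to $(1-u)^{2q-1}u^{n-1}\,\dd u$ — so I must be more careful about which measure the cylinder heights actually carry. The honest bookkeeping (one factor $h_j$ from the twist parameter, and the boundary-stratum coordinates absorbing the complementary area to power $n-1$, with the widths contributing the remaining powers) must be reconciled with Theorem~\ref{th:area-p}, where the exponent pattern gives $(n-1)!/(n+q-1)!$ for $c_{\cyl}$, i.e. a denominator consistent with integrand $t_1\cdots t_q(1-\sum t_j)^{n-1}$ and hence with the Beta function $B(n,q)$ after $u=1-s$. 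Thus the main obstacle — and the only non-routine point — is getting this exponent/measure bookkeeping exactly right so that the one-dimensional integral is genuinely $\int_0^{1-x}u^{n-1}(1-u)^{q-1}\,\dd u$, matching the $n$ and $q$ claimed; once that is pinned down the identity is immediate from the definition of $\I$, and the finite-sum expansion $(1-x)^n\sum_{k=0}^{q-1}\binom{n-1+k}{k}x^k$ follows by $q-1$ integrations by parts (or by the standard Beta-binomial identity), with $n=\dim_\C\cH(\alpha)-q-1=\dim_\C\cH(\alpha')$ by the dimension count already recorded before Theorem~\ref{th:area-p}.
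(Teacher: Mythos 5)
Your overall strategy --- reduce the ratio to an explicit integral over the cylinder and boundary-stratum parameters, observe that $M$ and the volume factors cancel, and finish with a Beta-function identity --- is in the spirit of the paper's proof. But your parametrization of the cylinder part is wrong, and the error is exactly the one your own exponent check detects. In a configuration of homologous saddle connections all $q$ cylinders share the same width $w=|\gamma|$ (the length of the configuration); the cylinder parameters are $\gamma\in B(\epsilon)$, the heights $h_1,\dots,h_q$, and the twists $t_i\in[0,w)$, so each twist contributes a factor $w$, not a factor $h_i$. There are no independent widths $w_j$ and no density $\prod_j h_j\,\dd h_j$: after integrating the twists, the measure on the heights is uniform (giving the simplex volume $h^{q-1}/(q-1)!$ as in Lemma~\ref{lem:cusp-p}), not $t_1\cdots t_q$. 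With your measure the one-dimensional reduction has density proportional to $A^{2q-1}(1-A)^{n-1}$ in the total cylinder area $A$, i.e.\ you would obtain $\I(1-x;n,2q)$ rather than the claimed $\I(1-x;n,q)$; you notice this mismatch but leave it as ``bookkeeping to be reconciled,'' and that bookkeeping is precisely the nontrivial content of the proof, so as it stands the argument does not close.

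The paper's resolution avoids your height--width coupling altogether: decompose $S\in C(K^{\epsilon})$ as $sS_1'$ and $tT_1$ with $S_1'\in\cH_1(\alpha')$ and $T_1\in\cH_1(0^q)$. Since $T_1$ consists exactly of the $q$ cylinders and has area one, the total periodic area of the rescaled area-one surface is $t^2/(s^2+t^2)$, a function of $(s,t)$ alone; the constraint $A\ge x$ becomes $t\ge\sqrt{x/(1-x)}\,s$ and never touches the individual heights. Hence the inner integral over $\cH_1^{\epsilon'}(0^q)$ is the same as in the unconstrained case, namely $2\pi\epsilon'^2/(q-1)!$, and --- a second point your plan omits --- the cusp radius $\epsilon'=\epsilon\sqrt{s^2+t^2}/t$ contributes the factor $(s^2+t^2)/t^2$ to the $(s,t)$-integrand; dropping it shifts the second Beta parameter by one. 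One is left with $I_x=\int s^{2n-1}t^{2q-1}(s^2+t^2)\,\dd t\,\dd s$ over the restricted region, which in polar coordinates followed by $u=\cos^2\theta$ equals $\B(1-x;n,q)/\bigl(4(n+q+1)\bigr)$, so $I_x/I_0=\I(1-x;n,q)$, with the finite-sum expansion supplied by Lemma~\ref{lem:combi4}.
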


See figure~\ref{fig:graphes} for the graph of $\I(1-x;n,q)$ for various values for $n$ and $q$.

\begin{figure}[!htb]
\begin{center}\includegraphics[scale=0.5]{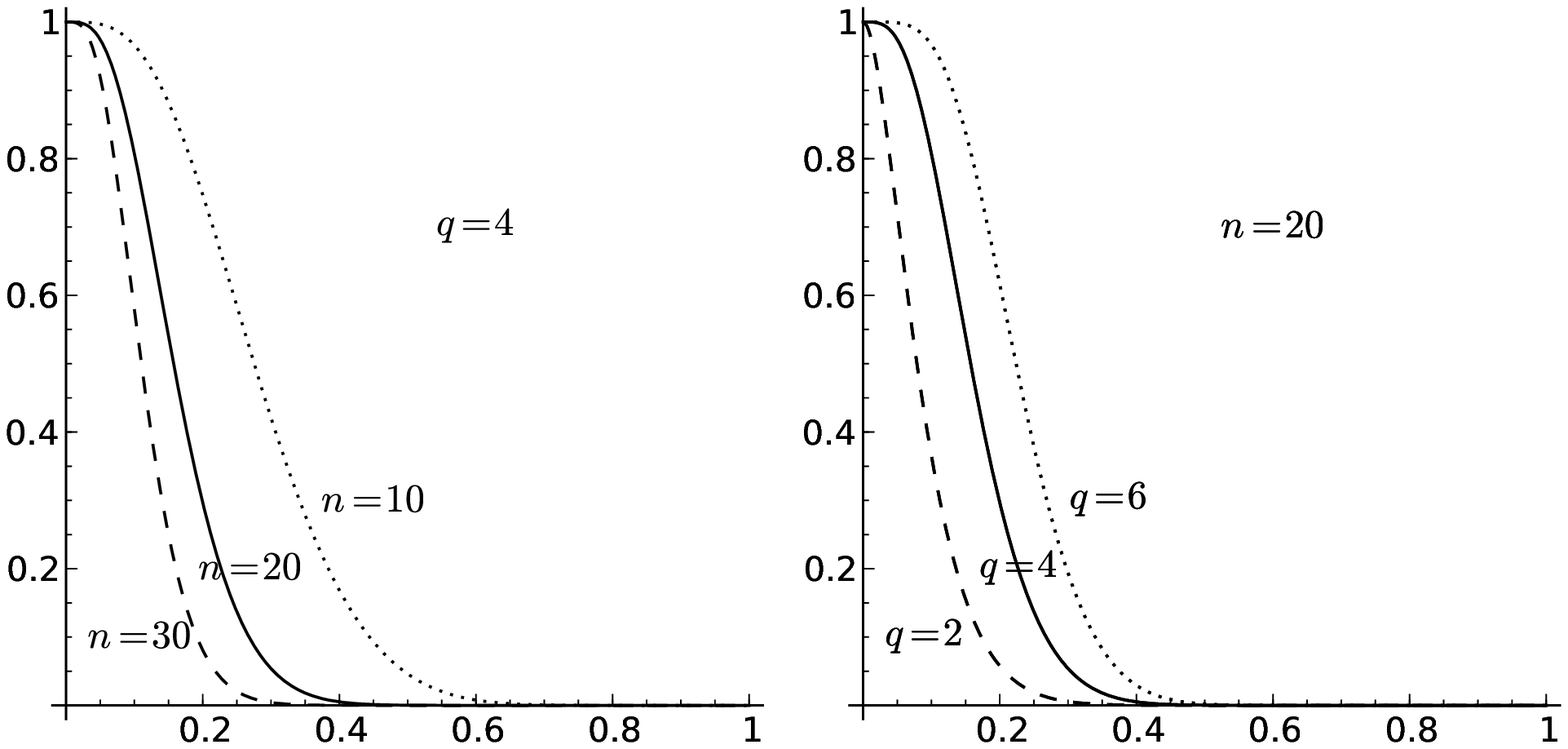}\end{center}
\caption{\label{fig:graphes} Graphs of the function 
$f(x)=\cfrac{c_{\conf,A>x}(K,\cC)}{c_{\conf}(K,\cC)}$}
\end{figure}

\begin{remark}
\begin{enumerate}
\item The fraction $\cfrac{c_{\conf,A\ge x}(K,\cC)}{c_{\conf}(K,\cC)}$ can be interpreted as the mean proportion of configurations of type $\cC$ whose periodic complementary region is big, that is, the total area of the cylinders is at least $x$ of the area of the surface.
 \item $\I(1;n,q)=1$ and \[\lim\limits_{x\to 1}\frac{I(0;n,q)}{(1-x)^n}=
 \sum_{\l=0}^{q-1}\binom{n+l-1}{l}=\binom{n+q-1}{n},\] so
\[\frac{c_{\conf,A\ge x}(K,\cC)}{c_{\conf}(K, \mathcal C)}\sim (1-x)^{n}\binom{n+q-1}{n}\mbox{ as }x\to 1.\]
In this form we can compare the result with the previous one for one cylinder, given in Theorem \ref{th:Vorobets:constant}.
\end{enumerate}
\end{remark}

In section~\ref{sec:correlation} we consider the problem of correlation between the area of two cylinders. Let $\cC$ be an admissible  configuration for a connected component $K$ that comes with at least two cylinders.
Choose (and fix) two cylinders and let $x,x_1\in [0,1)$. We denote by $N_{A_2\geq x, A_1\geq x_1}(S,\cC,L)$ the number of configurations of width length at least $L$ such that the area $A_1$ of the first cylinder is at least $x_1$ and such that the area $A_2$ of the second cylinder is at least $x$ of the remaining surface, i.e. it is at least $x(1-A_1)$. We denote by $c_{A_2\geq x, A_1\geq x_1}(K,\cC)$ the corresponding Siegel--Veech constant. To simplify notation we will write $c_{A_1\geq x_1}(K,\cC)$ instead of $c_{\conf,A_1\geq x_1}(K,\cC)$.
 We show in section~\ref{sec:correlation},
 
\begin{theorem}\label{th:correlation}
For any connected component $K$ of a stratum $\cH_1(\alpha)$, where $\alpha=(d_1,\dots,d_m)$  with $d_i\geq 1$, for $i=1,\dots,m$, and any admissible topological type $\cC$ of configurations  containing at least two cylinders,
 \[\frac{c_{A_2\geq x, A_1\geq x_1}(K,\cC)}{c_{A_1\geq x_1}(K,\cC)}=(1-x)^{\dim_\C \cH(\alpha)-3}.
 \]
 The result does not depend on $x_1$.
 \end{theorem}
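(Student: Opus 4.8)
The plan is to carry out the Eskin--Masur--Zorich computation of \cite{EMZ}, in the simplified form already used in this paper to prove Theorems~\ref{th:big-area} and~\ref{th:Vorobets:constant}, but now keeping track of the areas of the \emph{first two} distinguished cylinders of the configuration rather than of a single cylinder or of the total periodic area. Recall the outcome of those reductions: writing $A_1,\dots,A_q$ for the areas of the $q$ cylinders of a configuration of type $\cC$ on the area-one surface $S$, and $n=\dim_\C\cH(\alpha')=\dim_\C\cH(\alpha)-q-1$, the Siegel--Veech constant $c_{\conf}(K,\cC)$ equals $M\cdot\Vol(\cH_1(\alpha'))/\Vol(K)$ times the integral of the density $(1-A_1-\dots-A_q)^{n-1}$ over the open simplex $\{A_i>0,\ \sum_{i=1}^q A_i<1\}$, and imposing an area restriction amounts to restricting the domain of this integral (this is exactly the mechanism behind Theorem~\ref{th:big-area}). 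Equivalently, on a generic surface of $K$ the area vector $(A_1,\dots,A_q)$ of a configuration of type $\cC$ obeys the Dirichlet law $\mathrm{Dir}(1,\dots,1,n)$ with $q$ ones. With this dictionary the theorem becomes the identity $\Pr\!\left[A_2\ge x(1-A_1)\,\middle|\,A_1\ge x_1\right]=(1-x)^{\dim_\C\cH(\alpha)-3}$ for that Dirichlet law.

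First I would integrate out $A_1$. Fixing $A_1=t$ with $t\ge x_1$, the residual mass $1-t$ is distributed among the cylinders $2,\dots,q$ and the boundary surface; rescaling all of their areas by $1-t$ turns the residual density into $(1-t)^{\,n+q-2}$ times the density of $\mathrm{Dir}(1,\dots,1,n)$ with $q-1$ ones in the rescaled coordinates $u_i=A_i/(1-t)$, and the key point is that the constraint $A_2\ge x(1-A_1)$ becomes simply $u_2\ge x$, with no residual dependence on $t$. Hence both the numerator $c_{A_2\ge x,\,A_1\ge x_1}(K,\cC)$ and the denominator $c_{A_1\ge x_1}(K,\cC)$ factor as $\bigl(\int_{x_1}^{1}(1-t)^{\,n+q-2}\,\dd t\bigr)$ times an integral in the $u_i$ that does not involve $x_1$; the $t$-integral cancels in the ratio, which is the source of the independence from $x_1$ asserted in the statement. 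Conceptually this is the ``neutrality'' of the Dirichlet distribution: conditionally on $A_1$, the normalized vector $(A_2,\dots,A_q,A_{\mathrm{bdry}})/(1-A_1)$ is again Dirichlet and independent of $A_1$.

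It then remains to evaluate, inside $\mathrm{Dir}(1,\dots,1,n)$ with $q-1$ ones, the quantity $\Pr[u_2\ge x]$. Integrating out $u_3,\dots,u_q$ against the factor $(1-\sum u_i)^{n-1}$ collapses the remaining coordinates to a single variable $u_2$ with density proportional to $(1-u_2)^{\,n+q-3}$, i.e. the Beta law $\B(1,n+q-2)$, whose survival function at $x$ is $(1-x)^{\,n+q-2}$. Since $n+q-2=\dim_\C\cH(\alpha)-3$, this is the claimed value, and it is manifestly independent of $x_1$.

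The only step that is not a routine Beta-function manipulation is the first one: the assertion that counting configurations subject to the two nested area inequalities is again governed by the EMZ integral with the domain cut down accordingly, i.e. that in the limit $L\to\infty$ the length parameter and the thin-part decomposition of the surface decouple from the area constraints exactly as in \cite{EMZ} and in the proof of Theorem~\ref{th:big-area}. I expect this to be the main (and essentially the only) obstacle; once it is granted, the argument is the two-step Dirichlet reduction sketched above, and no geometric input beyond \cite{EMZ} is required.
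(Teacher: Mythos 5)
Your proposal is correct, and its backbone is the same as the paper's: both rest on the EMZ thick-part decomposition and equation~\eqref{initial-integral}, reducing the count to an explicit integral over the parameters $(s,t,w,h_1,\dots,h_{q-1})$. The difference is in packaging. The paper redoes the integral by hand: it computes the volume of $\Cusp_{a_1,a}(\epsilon')$ with the nested constraints $a_1h\le h_1\le\frac{1-a}{1-x}h$ and $ah-xh_1\le h_2\le h-h_1$ (whose bounds depend on $s,t$ through $a_1=x_1\frac{s^2+t^2}{t^2}$, $a=x\frac{s^2+t^2}{t^2}$), finds the extra factor $(1-a_2)^{q-1}/(1-x)$ with $x_2=x+x_1(1-x)$ together with the restricted $(s,t)$-domain $t\ge\sqrt{x_2/(1-x_2)}\,s$, recognizes the resulting integral as $I'_{x_2}/(1-x)$ from the proof of Theorem~\ref{th:Vorobets:constant}, and concludes via $1-x_2=(1-x)(1-x_1)$. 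Your route instead packages the same cusp computation (Lemma~\ref{lem:cusp-p} plus~\eqref{initial-integral}) into the statement that the normalized area vector $(A_1,\dots,A_q)$ is governed by the density $(1-A_1-\dots-A_q)^{n-1}$ on the simplex — a dictionary that is indeed exactly consistent with Theorem~\ref{th:area-p} and Theorems~\ref{th:Vorobets:constant} and~\ref{th:big-area}, including the normalization $\int_{\Delta^q}(1-\sum A_i)^{n-1}=\frac{(n-1)!}{(n+q-1)!}$ — and then uses the neutrality of the Dirichlet law; your cancellation of the $\int_{x_1}^1(1-t)^{n+q-2}\dd t$ factor is the probabilistic shadow of the paper's identity $I'_{x_2}=(1-x_2)^{n+q-1}\B(n,q)/(4(n+q+1))$. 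What each buys: the paper's version is self-contained and explicitly handles the domain subtleties (non-vacuousness of the $h_2$-constraint, positivity of the bounds for all relevant $s,t$); yours makes the independence from $x_1$ conceptually transparent and extends immediately to other nested constraints. The one step you leave ``granted'' — that indicator weights in the two nested inequalities are still computed by the same integral with the domain cut down — is precisely what the paper verifies by the computation just described; it is routine but not free, since the cuts on $h_1,h_2$ depend on $s,t$, so a complete write-up must either redo the cusp integral with the cut domain (as the paper does) or change variables to the normalized areas before imposing the constraints. With that verification included, your argument is a valid proof.
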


Morally, we compute the asymptotic probability that, among configurations whose first cylinder has area $x_1$, we have a second cylinder with area at least $x(1-x_1)$. Comparing Theorems~\ref{th:correlation} and~\ref{th:Vorobets:constant} we see that this is the probability that, among all configurations, the area of the second cylinder is at least $x$, except that the parameter space has one fewer dimension.
So, in some sense, except for the fact that the area of the first cylinder gives a restriction on the range for the area of the second cylinder, the area of the second cylinder is independent of the area of the first cylinder.

In the results presented above we studied individual configurations. In the remaining part of the paper we study extremal properties of configurations among all configurations in a given stratum or even among all strata for a  fixed genus.
 
In section~\ref{sec:maximal-abelian} we address the question of finding topological types of configurations $\cC$ (admissible for some connected component $K$) that maximizes
\[c_{\MeanArea}(K,\cC)=
\frac{c_{\area}(K,\cC)}{c_{\conf}(K,\cC)}.\]
The constant $c_{\MeanArea}(K,\cC)$ can be interpreted as the asymptotic mean area of the periodic part (union of the cylinders) of the complementary region of a configuration of topological type $\cC$.

Each stratum has at most three connected components that are classified by the invariants ``hyperellipticity'', and ``parity of spin structure''.
(We recall in section~\ref{sec:simple-abelian}  the classification of connected components from~\cite{KZ}.)

The quantity $ c_{\MeanArea}(K,\cC) $ varies considerably among strata. For the connected stratum $\cH(1, 1,\dots, 1)$, the maximal value of $ c_{\MeanArea}(K,\cC) $ over all the configurations is $\frac{1}{4}$. For the connected component $\cH^{hyp}(g-1, g-1)$ it is equal to $\frac{1}{2g}$. The following proposition gives an uniform bound on the ratio $ c_{\MeanArea}(K,\cC) $.
\begin{theorem}\label{prop:qmax:dim:ab} 
Let $K$ be any connected component of a stratum $\cH(\alpha)$ and $\cC$ be any admissible topological type of configuration for $K$, then the asymptotic mean area of the periodic complementary regions satisfies
\[c_{\MeanArea}(K,\cC)\leq \frac{1}{3}.\]
The maximum is attained for any genus $g\ge 2$: for each $g\geq 2$ there is a topological type of configuration $\cC_g$ that is admissible for the component $\cH^{odd}(2,2,\dots,2)$ ($g-1$ zeros of order $2$) of Abelian differentials on a surface of genus $g$ such that the corresponding constant $c_{\MeanArea}$ is $\frac{1}{3}$.
\end{theorem}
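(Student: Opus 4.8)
The plan is to reduce the statement to a sharp combinatorial bound on the number $q$ of cylinders of a configuration, and then to produce an extremal family realizing it.

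\textbf{Step 1: reduction to $3q\le 2g+m-2$.} Feeding $c_{\cyl}(K,\cC)=q\,c_{\conf}(K,\cC)$ into Theorem~\ref{th:area-p} (at $p=1$), all the shared factors — the combinatorial constant $M$, the volume ratio $\Vol(\cH_1(\alpha'))/\Vol(K)$, and $(n-1)!$ — cancel, leaving
\[
c_{\MeanArea}(K,\cC)=\frac{c_{\area}(K,\cC)}{c_{\conf}(K,\cC)}=\frac{q\cdot(n-1)!/(n+q)!}{(n-1)!/(n+q-1)!}=\frac{q}{n+q}=\frac{q}{\dim_\C\cH(\alpha)-1},
\]
since $n+q=\dim_\C\cH(\alpha)-1=2g+m-2=\sum_{i=1}^m(d_i+1)$. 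Hence $c_{\MeanArea}(K,\cC)\le\tfrac13$ is \emph{equivalent} to $3q\le 2g+m-2$, and the value $\tfrac13$ is attained exactly when this is an equality; equivalently one must show $n=\dim_\C\cH(\alpha')\ge 2q$.

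\textbf{Step 2: proving $n\ge 2q$.} I would work with the structure of an admissible configuration from \cite{EMZ}. Cutting $S$ along the homologous saddle connections produces the $q$ cylinders together with non-cylinder pieces $P_1,\dots,P_r$ (compact translation surfaces with boundary), and contracting the saddle connections turns the $P_i$ into the connected components of the principal boundary surface, so $n=\sum_{i=1}^r(2\tilde g_i+m_i-1)$, where $\tilde g_i$ and $m_i$ are the genus and number of zeros of the $i$-th component. An Euler-characteristic count across the gluing circles gives $\sum_i(2-2\tilde g_i-b_i)=2-2g$ with $\sum_i b_i=2q$ (two ends per cylinder), while cone-angle bookkeeping at the singularities where the cylinder ends attach, together with $\tilde g_i\ge 1$ (each component carries an Abelian differential), should yield the per-piece inequality $2\tilde g_i+m_i-1\ge\deg_i$, where $\deg_i$ is the number of cylinder ends glued to $P_i$. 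Summing over $i$ then gives $n\ge\sum_i\deg_i=2q$, i.e.\ $3q\le 2g+m-2$.

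\textbf{Step 3: sharpness.} For every $g\ge 2$ I would take $\cC_g$ to be a cyclic chain of $g-1$ cylinders whose complementary non-cylinder pieces are $g-1$ tori, one inserted between each consecutive pair of cylinders, with the two cylinder ends at each torus attached at a single marked point; a direct check of Euler characteristic and cone angles shows this yields a surface of genus $g$ with $g-1$ zeros of order $2$, so $\cC_g$ is an admissible configuration in $\cH(2,\dots,2)$ with $q=g-1$, whence $c_{\MeanArea}=(g-1)/(3(g-1))=\tfrac13$ by Step~1. The remaining point is to land in the \emph{odd} component: admissibility I would confirm by realizing the chain on an explicit square-tiled surface, and the parity I would compute from the Arf invariant as the sum of the contributions of the $g-1$ cylinders and the $g-1$ torus blocks, which I expect to come out odd for this symmetric configuration (for $g=2$ this is just the $1$-cylinder configuration in $\cH(2)=\cH^{odd}(2)$).

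\textbf{Expected main obstacle.} The crux is the sharp constant in Step~2. Using only $\tilde g_i\ge 1$ gives merely $n\ge 2r$, which is too weak as soon as the dual graph of pieces and cylinders contains a cycle, and the naive genus/zero identities are tautological (they just re-express $n+q=2g+m-2$). Obtaining $n\ge 2q$ requires the precise bound $2\tilde g_i+m_i-1\ge\deg_i$ for each piece, and I expect the real work to lie in extracting this from the admissibility constraints of \cite{EMZ} — in particular from the fact that several cylinder ends meeting at one singularity force its cone angle, hence its contribution to $m_i$ (or to $\tilde g_i$), to grow accordingly. Verifying the odd spin parity in Step~3 is the only other genuine computation.
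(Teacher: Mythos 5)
Your Step 1 is correct and coincides with the paper's starting point: Theorem~\ref{th:area-p} (equivalently Corollary~\ref{cor:mean-area-p} plus $c_{\cyl}=q\,c_{\conf}$) gives $c_{\MeanArea}(K,\cC)=q/(n+q)=q/(2g+m-2)$, so everything reduces to $n\ge 2q$. But Step 2, the heart of your argument, has a genuine gap: the inequality you need is exactly the one you do not prove, and your sketch of how to get it is off-target. Your bookkeeping identity $\sum_i b_i=2q$ is false in general (two non-cylinder pieces can share a boundary saddle connection, e.g.\ in blocks creating zeros of type $III$), and your worry about ``cycles in the dual graph'' shows you are missing the structural fact that makes the Abelian case easy: because the saddle connections of a configuration are homologous, each complementary non-cylinder piece has \emph{exactly two} boundary saddle connections (figure eight or pair of holes) and all pieces are arranged in a single cyclic order (Section~\ref{sec:notation}, following \cite{EMZ}). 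Hence $\deg_i\le 2$ automatically; moreover, since every $d_i\ge 1$, two cylinders are never adjacent (such a junction would be a cone point of angle $2\pi$, i.e.\ a marked regular point), so the number $r$ of non-cylinder pieces satisfies $q\le r$; and each contracted piece carries an Abelian differential, so $\tilde g_i\ge 1$, $m_i\ge 1$, and it contributes at least $2$ to $n$. This yields $n\ge 2r\ge 2q$ at once — precisely the estimate you dismissed as ``too weak''. So your route can be completed, but not by the cone-angle argument you propose; the missing input is the principal-boundary structure of \cite{EMZ}, not a new per-piece angle count.

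For comparison, the paper proceeds differently: it counts half-cylinders per newborn zero (a zero of type $I$ accounts for one cylinder, type $II$ for half, type $III$ for none, and only type $II$ zeros can have order $1$), obtaining $q_{\max}(\alpha)=\left[\sum_i\chi(d_i)\right]$ with $\chi(1)=1/2$, $\chi(d)=1$ for $d\ge 2$ (Proposition~\ref{prop:bound:for:q:max}), and then maximizes $q_{\max}(\alpha)/(2g-2+\ell(\alpha))$ over partitions $\alpha$ of $2g-2$, which forces all entries equal to $2$ and gives the bound $1/3$; your per-configuration inequality $3q\le 2g+m-2$, once actually proved, bypasses that optimization and is arguably cleaner. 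In Step 3 you identify the same extremal configuration as the paper (a cyclic chain alternating $g-1$ cylinders with $g-1$ tori carrying figure-eight boundaries created at regular points, landing in $\cH(2,\dots,2)$ with $q=g-1$), but the odd-spin-parity claim is again left as an expectation; the paper settles it by invoking Lemma~14.2 of \cite{EMZ} (a surface assembled using only type-$I$ zeros has odd parity), so you should either cite that lemma or genuinely carry out the Arf-invariant computation rather than assert it.
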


To prove this result we need to determine the maximal number of cylinders that can come from any configuration which is admissible for a fixed stratum $\cH(\alpha)$.
We insist on the fact that we compute here the number of cylinders in rigid collections of saddle connections, which is different from the studies of Naveh \cite{N} and Lindsey \cite{L} where they count the number of parallel cylinders.

In section~\ref{sec:simple-abelian} we answer a question of A. Eskin and A. Wright: is it possible to find in each connected component of each stratum a topological type of configuration  whose complementary regions are tori with boundary and cylinders.

The answer depends on the connected component. 
We show (Proposition~\ref{prop:Alex}): for hyperelliptic components this is not possible; for the components with even spin structure when the genus is even this is not possible unless we allow one of the complementary regions to be a genus two surface; in all other cases this is possible.

\subsection{Notation}\label{sec:notation}
We introduce here most of the notation we need for the computation of Siegel--Veech constants. 
For survey material on Abelian differentials and translation surfaces see~\cite{GJ,Z,MT}. For the exact definition of configurations and related results  see~\cite{EMZ}.

Let $M_{g,n}$ denote a closed oriented surface $M_g$ of genus $g$ on which there are  $m$ marked points $\Sigma=\{P_1,\dots,P_m\}$. By $(R,\omega)$ we denote a Riemann surface structure $R$ on $M_g$ together with an  Abelian differential $\omega$.
If $\omega$ is not identically zero then $\Sigma$ is the set of zeros of $\omega$.  
We usually only write $\omega$ for $(R,\omega)$.
If we denote by $(d_1,\ldots,d_m)$ the orders of the zeros of $\omega$  then we have $\sum d_i=2g-2$. 

The form $\omega$ can be used to define an atlas of adapted coordinates on $R$. In these adapted coordinates the Abelian differential $\omega$ becomes  $dz$ in a neighborhood of any point of 
$R\setminus \Sigma$ and is $(d_i+1)w^{d_i} dw=d(w^{d_i+1})$ in a neighborhood of a point $P_i\in\Sigma$, where $d_i$ is the order of the zero $P_i$. Transition functions away from the zeros for these adapted coordinates  are translations. We refer to a surface  together with an atlas whose transition functions are translations as {\em translation surfaces}.
 
Using such an atlas, $R\setminus \Sigma$ inherits from the complex plane a flat (zero curvature) Riemannian metric. A zero of order $d_i$ of the Abelian differential (that is a regular point of the Riemann surface structure) corresponds to a conical singularity of the flat metric of total angle $2\pi(d_i+1)$. These points are  also called {\em saddles}.

The horizontal unit vector field on $\C$ pulls back by adapted coordinates to a horizontal unit vector field on $R\setminus \Sigma$. Away from the singularities, the leaves of the corresponding foliation  are geodesics with respect to the flat metric. In fact, for each $\theta\in [0,2\pi[$ we have a unit vector field and so a foliation in that direction that comes from the unit vector field in direction $\theta$ of the complex plane.

The converse construction is also possible: suppose that $S$ is a translation surface structure on $M_{g,m}$, i.e. an atlas on $M_{g,m}$ whose transition functions are translations.
A translation surface inherits from $\R^2$ a flat Riemannian metric and a parallel vector field on $M_{g,m}$.  We assume that $M_g$ is the metric completion of $M_{g,m}$. The points of $\Sigma$ that are not regular points for the metric, are cone type singularities  where the total angle is of the form $2\pi(d_i+1)$ for some $d_i\in\N$. 

By identifying $\R^2$ and $\C$, a translation surface inherits from $\C$ a complex (Riemann surface) structure on $M_g$ and a holomorphic one form ({\em abelian differential}) $\omega$. The zeros of $\omega$ are contained in $\Sigma$.

Each translation surfaces can be represented by a polygon in the plane whose edges come in pairs of parallel sides of the same length. Identifying each pair by a translation one obtains a translation surface. The vertices give rise to singularities (or regular points if the total angle is $2\pi$.)

We identify two translation surface structures if there is a bijection of the underlying topological
surface that is in local coordinates  a translation. We identify two Abelian differentials (for some complex structures) if they are biholomorphically equivalent. There is then a one to one correspondance between Abelian differentials and translation surfaces. We denote by $\cH$ the moduli space of Abelian differentials $\omega$ or equivalently of translation surfaces $S$. The moduli space $\cH$ is an algebraic variety.
 
 Given $\alpha=(d_1,\ldots,d_m)$ such that $2g-2=\sum_i d_i$. The set $\cH(\alpha)$ of Abelian differentials that share the same zero structure $\alpha$ is called a stratum. The stratum $\cH(\alpha)$ is an algebraic subvariety that admits a natural affine structure and a natural ``Lebesgue'' volume element induced by this affine structure \cite{Ma1,Ve1}. The dimension of $\cH(\alpha)$ is $\dim_\C\cH(\alpha)=2g+m-1$.

The area of the translation surface $S$ defined by the Abelian differential $\omega=\phi(z) dz$ is $ \int _S |\phi(z)|^2 \dd x \dd y$. We denote by $\cH_1(\alpha)$ the hyperspace of $\cH(\alpha)$ of area one surfaces. Masur~\cite{Ma1} and Veech~\cite{Ve1} showed that $\cH_1(\alpha)$ with the measure induced by the measure on $\cH(\alpha)$ is of finite volume.

 There is a natural $\SL$ action on $\cH(\alpha)$. An element $g\in\SL$ acts on local  coordinates by postcomposition of $g$. The measure on $\cH(\alpha)$ and $\cH_1(\alpha)$ is $\SL$ invariant. If we represent a translation surfaces  by a polygon in $\R^2$, then the action of $g$ is the usual action on $\R^2$.

Geodesic segments in the flat metric have constant angle with respect to the flat metric, so a geodesic segment $\gamma$ can be represented by a holonomy vector $\hol(\gamma)$ in $\R^2$. The angle and length of the vector is given by the direction and length of the geodesic segment. We will often use $\gamma$ both for the geodesic segment and for the holonomy vector.

As we said above, a closed geodesic is always contained in a maximal cylinder of closed geodesics and each boundary component of the cylinder is generically a closed saddle connection.
 As all of the geodesics in the cylinder are represented by the same vector we say that this vector represents the cylinder. Its length is  the {\em width} of the cylinder.

Suppose that on a translation surface we find in some direction a {\em configuration of homologous saddle connections}, meaning a maximal family of saddle connections, that are homologous relative to the singularities. All of the saddle connections are parallel and of the same length, so they share the same holonomy vector. Its length is the {\em length} of the configuration. 

A configuration defines the following data: the named singularities the saddle connections are based at; the topological type of the complementary regions; the knowledge of which saddle connection bounds which complementary region, so in particular the cyclic order of the complementary regions; the order of the singularities (if there are any)  in the interior of each complementary region; the singularity structure on the boundary (the type of boundary and angles at the boundary singularities to be described below). 

We say that two configurations are of the same {\em topological type} if they define the same data.
Almost all surfaces in a given connected component $K$ share the same topological types of configurations that can be realized on the surface. We talk about an {\em admissible topological type} for $K$.

Denote by $V(S,\cC)$ the (discrete) set of  holonomy vectors associated to configurations on $S$ of type $\cC$. We  are allowed to associate weights to the holonomy vectors. Denote by $B(L)\subset\R^2$ the disk of radius $L$ centered at the origin and by $N(S,\cC,L)$ the cardinality of $V(S,\cC)\cap B(L)$, where the elements of $V(S,\cC)\cap B(L)$ are counted with their weights. So if we write an element of $V(S,\cC)$ as $(v,w(v))$ where $w(v)$ is the weight of the holonomy vector $v$, then $N(S,\cC,L)=\sum_{v\in V(S,\cC)\cap B(L)}w(v)$.

By using appropriate weights on the holonomy vectors the counting function $N(S,\cC,L)$ becomes the counting functions $N_{\conf}(S,\cC,L)$, $N_{\cyl}(S,\cC,L)$, $N_{\area}(S,\cC,L)$ e.t.c. introduced in section~\ref{sec:results}.
If for example we count holonomy vectors associated to a configuration with weight one (resp. the number of cylinders, total area of the cylinders) then $N(S,\cC,L)$ equals $N_{\conf}(S,\cC,L)$ (resp. $N_{\cyl}(S,\cC,L)$, $N_{\area}(S,\cC,L)$).

Given a connected component $K$ of some stratum $\cH_1(\alpha)$ and an admissible topological type $\cC$ for $K$. It follows from~\cite{EM} that the set of (weighted) holonomy vectors $V(S,\cC)$ and the associated counting functions $N(S,\cC,L)$ we consider in this paper verify the following conditions :

\begin{enumerate}
\item[(A)] for every $g\in SL(2,\R)$, $V(gS,\cC)=gV(S,\cC)$.
\item[(B)] for every $S\in K$ there exists a constant $c(S)>0$ such that $N(S,L,\cC)\leq c(S) L^2$. The constant $c(S)$ can be chosen uniformly on compact sets of $K$.
\item[(C)] there exist constants $L>0$ and $\epsilon>0$ such that 
$N(S,L,\cC)$ is $L^{1+\epsilon}(K,\mu)$ as a function of $S$.
\end{enumerate}

In fact, the authors of~\cite{EM} show that the above conditions are verified for the set $V(S)$ of holonomy vectors of closed saddle connections on $S$ where the weight associated to a holonomy vector is the number of saddle connections on $S$ that share this vector.
The weights we use in this paper are ``invariant under $\SL$'', so the sets $V(S,\cC)$ we use verify condition (A). The number of configurations is bounded by the number of saddle connections, and as we use bounded weights, the counting functions $N(S,\cC,L)$ we consider also satisfy conditions (B) and (C). 

For $f\in C_0^{\infty}(\R^2)$ one defines the function $\hat f :K\rightarrow \R$ by
\[ \hat{f}(S)=\sum_{v\in V(S,\cC)}w(v) f(v).\]

The fact that the sets $V(S,\cC)$ and associated counting functions $N(S,\cC,L)$ we consider in this paper satisfy the conditions (A), (B), (C), implies, using~\cite{EM}, that:

\begin{theocite}[\cite{EM,Ve3}] Let $K$ be a connected component of some stratum $\cH_1(\alpha)$ of Abelian differentials and $\cC$ an admissible topological type of saddle connections for $K$. 
Then for any of the sets $V=V(S,\cC,L)$ and associated counting functions $N_V(S,\cC,L)$ we consider in this paper there is a constant $c_V(K,\cC)$ such that the following holds :
\begin{enumerate}
\item[(a)] For almost any  translation surface $S$ in $K$,
\[\lim_{L\rightarrow \infty}\frac{N_V(S,\cC,L)}{\pi L^2}=c_V(K,\cC).\]
\item[(b)] For any $f\in C_0^{\infty}(\R^2)$,
\[\frac{1}{\vol(K)}\int_K\hat{f}(S) \dd vol(S)=c_V(K,\cC)\int_{\R^2} f(x,y) \dd x \dd y.\]
\end{enumerate}
\end{theocite}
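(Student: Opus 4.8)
This statement is quoted from~\cite{EM,Ve3}; what remains to be supplied is only the verification that the weighted point sets $V(S,\cC)$ attached to the counting functions used in this paper fall within the scope of those theorems, i.e.\ that they satisfy conditions (A), (B), (C) above. For (A): for each of the counting functions $N_{\conf}$, $N_{\cyl}$, $N_{\area^p}$, $N_{\conf,A_1\ge x}$, \dots\ the weight $w(v)$ of a holonomy vector $v\in V(S,\cC)$ is a function of the combinatorial type of the associated configuration and of ratios of areas measured on the area-one surface $S$; the $SL(2,\R)$-action rescales all areas by the same factor and leaves the combinatorics unchanged, so $w$ is unaffected and $V(gS,\cC)=gV(S,\cC)$ with matching weights. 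For (B) and (C): each configuration of type $\cC$ contributes one holonomy vector, each holonomy vector is shared by a uniformly bounded number of saddle connections, and all weights we use are bounded, so $N_V(S,\cC,L)$ is bounded by a fixed multiple of the number of closed saddle connections of length at most $L$; Masur's quadratic estimate~\cite{Ma2,Ma3}, with constants uniform on compact subsets of $K$, then gives (B), and the $L^{1+\epsilon}$-integrability of the saddle-connection counting function proved in~\cite{EM} gives (C).

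Granting (A), (B) and (C), for part (b) I would run the Siegel--Veech argument of~\cite{EM}: conditions (B) and (C) make $\hat f$ integrable on $K$, so $\Lambda(f)=\frac{1}{\vol(K)}\int_K\hat f\,\dd\mu$ is a positive linear functional on $C_0^\infty(\R^2)$; condition (A) together with the $SL(2,\R)$-invariance of $\mu$ shows $\Lambda$ is $SL(2,\R)$-invariant (substitute $f\circ g^{-1}$ for $f$ and change variables in the integral over $K$); and since $SL(2,\R)$ acts transitively on $\R^2\setminus\{0\}$ and the vectors of $V(S,\cC)$ are nonzero, the only such $\Lambda$ are constant multiples of Lebesgue measure, which produces the constant $c_V(K,\cC)$ and identity (b).

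For part (a) I would invoke the ergodic-theoretic part of~\cite{EM} (see also~\cite{Ve3}): approximate the indicator of the disk $B(L)$ from above and below by functions in $C_0^\infty(\R^2)$, use (b) to control the spatial average of the associated transforms, and then combine the mixing of the Teichm\"uller geodesic flow on $K$ with the $L^{1+\epsilon}$-bound (C) to pass from averages to a pointwise limit for almost every $S$; the ergodicity of the Masur--Veech measure on the connected component $K$ forces the flow-invariant limit to be the constant $c_V(K,\cC)$. I expect this passage from the averaged identity (b) to the almost-everywhere statement (a) to be the one genuinely delicate step, but here it is used verbatim as a black box from~\cite{EM}.
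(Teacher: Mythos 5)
Your proposal takes essentially the same route as the paper: the theorem itself is quoted from \cite{EM,Ve3}, and all that the paper supplies is the verification of conditions (A), (B), (C) — (A) because the weights used are invariant under the $\SL$-action, and (B), (C) because the configuration counts are dominated by the closed-saddle-connection count (for which \cite{EM} verifies these conditions) with bounded weights — which is exactly what you do, with the Siegel--Veech transform and ergodic arguments of \cite{EM} correctly left as a black box. One small remark: elements of $\SL$ have determinant one, so areas (and hence your weights) are preserved exactly rather than ``rescaled by a common factor''; this only simplifies your verification of (A).
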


If the convergence in (a) is replaced by convergence in $L^1$ then this theorem follows from a more general theorem under similar hypotheses in~\cite{Ve3}. In that paper, (b) is proved for integrable functions $f$ of compact support. 
We will only use (b) for the characteristic function on a disk.

The paper~\cite{EMZ} explains how this last theorem can be used to compute the Siegel--Veech constant $c_{\conf}(K,\cC)$. The same method can be used to compute the other Siegel--Veech constants we consider in this paper. 

The strategy used in~\cite{EMZ} is as follows: 
if we apply (b) for the characteristic function $f_{\epsilon}$ of the disc $B(\epsilon)$ of radius $\epsilon$ centered at the origin then the integral of the right hand side becomes $\pi\epsilon^2$ and we have 
\[ \hat{f_{\epsilon}}(S)=\sum_{v\in V(S,\cC)\cap B(L)}w(v)=N(S,\cC,\epsilon).\]
So
\[  c_V(K,\cC)=\frac{1}{\vol(K)}\frac{1}{\pi\epsilon^2}\int_KN_V(S,\cC,\epsilon) \dd \vol(S).\]

Denote by $K(\epsilon,\cC)$ the subset of translation surfaces in $K$ that contain at least one configuration of type $\cC$ of length smaller than $\epsilon$. 
So $N_V(S,\cC,\epsilon)$ is zero outside $K(\epsilon,\cC)$.
Denote by $K^{\thick}(\epsilon,\cC)$ the subset of $K(\epsilon,\cC)$ of translation surfaces that contain exactly one configuration of type $\cC$ of length smaller than $\epsilon$ but contain no other closed saddle connection of length smaller than $\epsilon$. Using a result from~\cite{EM} the autors of~\cite{EMZ} show that
\[\vol(K(\epsilon,\cC))=\vol(K^{\thick}(\epsilon,\cC))+o(\epsilon^2).\] So
\begin{align}
  c_V(K,\cC)\quad=\quad&\frac{1}{\vol(K)}\frac{1}{\pi\epsilon^2}\int_{K(\epsilon,\cC)} N_V(S,\cC,\epsilon) 
  \dd \vol(S)\notag \\
  \quad=\quad&
  \lim_{\epsilon\to 0}\frac{1}{\vol(K)}\frac{1}{\pi\epsilon^2}\int_{K^{\thick}(\epsilon,\cC)} N_V(S,\cC,\epsilon) \dd \vol(S).\label{Siegel--Veech}
  \end{align}

Consider a translation surface $S$ in $K^{\thick}(\epsilon,\cC)$. On $S$ we have a configuration of closed saddle connections of topological type $\cC$ of length smaller than $\epsilon$ and no other short closed saddle connection. Cutting along the closed saddle connections we decompose
$S$ into several pieces. 
There will be some, say $p\geq 1$, surfaces $S_1,\dots,S_p$ with boundary and some, say $q\geq 0$, periodic cylinders $C_1,\dots,C_q$, all having the same width. 
The pieces are arranged in a cyclic order. The boundary of each $S_i$ is made up of two closed saddle connections. 

For each connected component $S_i$, by taking out the boundary and then taking the compactification we get a surface with either one boundary component, a {\em figure eight}, as in the right part of figure~\ref{fig:figureeight}, or we get a surface with two boundary components, a {\em pair of holes}, as in the right part of figure \ref{fig:twoholes}.

In the first case, the figure eight boundary describes two interior sectors of the surface of angles  $2\pi(a'+1)$ and $2\pi(a''+1)$, for some integers $a',a''\geq 0$. (Figure~\ref{fig:figureeight} illustrates the case $a'=1$ and $a''=0$.) By shrinking the figure eight boundary to a point we produce a singularity of order $a'+a''$ if $a'+a''\geq 1$, or a regular point if $a'+a''=0$. (See the left part of figure~\ref{fig:figureeight}.)

\begin{figure}[!htb]
\begin{center}\includegraphics[scale=1]{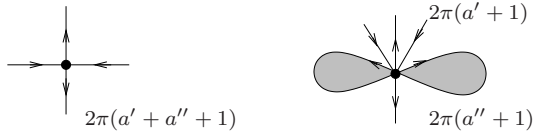}\end{center}
\caption{
\label{fig:figureeight}
Figure eight construction}
\end{figure}

In the second case, each of the boundary components comes with a boundary singularity of angles $\pi(2b'+3)$ and $\pi(2b''+3)$ for some integers $b',b''\geq 0$. (Figure~\ref{fig:twoholes} illustrates the case $b'=1$ and $b''=0$.)
By shrinking the two boundary components we produce singularities (or regular points) of orders $b'$ and $b''$. (See the left part of figure~\ref{fig:twoholes}.)

\begin{figure}[!htb]
\begin{center}\includegraphics[scale=1]{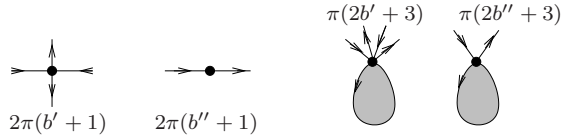}\end{center}
\caption{
\label{fig:twoholes}
Creating a pair of holes}
\end{figure}

The type of boundary, figure eight or pair of holes, and the associated angles is what we referred to above as ``singularity structure on the boundary''.

We get in this way closed surfaces $S_i'$ that  belong to some stratum $\cH(\alpha_i')$, for $i=1,\dots,p$. 
We write $\alpha'=\sqcup_{i=1}^p \alpha_i'$ and $\cH(\alpha')=\Pi_{i=1}^p\cH(\alpha_i')$.  We will say that the surface $S'$ whose connected components are $S_1',\dots,S_p'$ belongs to  $\cH(\alpha')$. We say that $S'$ belongs to the {\em principal boundary} of $\cH(\alpha)$ determined by $\cC$ and that $\cH(\alpha')$ is the corresponding {\em principal boundary stratum}. All this data (topological type of complementary regions, cyclic order, boundary stratum, e.t.c.) are the same for any configuration of the same topological type
$\cC$. In fact the  topological type is characterized by this data.

By shrinking a closed saddle connection, the singular point it is based at might become a regular point with total angle $2\pi$. 
In this case the regular point will be considered as a marked point of order $0$. So $\alpha_i'$ 
can contain one or two $0$.
 
 The procedure of shrinking saddle connections can be reversed. We summarize the description form~\cite{EMZ}  in case where $\cH(\alpha)$ has only one connected component $K$.
Start with a (maybe disconnected) surface $T'$ in $\cH(\alpha')$ and call the connected 
components $T_i'$. 
Choose some holonomy vector $\gamma$ in $B(\epsilon)$. 
There are two types of surgery. A {\em figure eight surgery} where we start with a singularity or a marked point of order $a\ge 0$, choose $a',a''\ge 0$ such that $a=a'+a''$ and then metrically create a figure eight boundary that consists of two saddle connections in direction $\gamma$ that are of length $|\gamma|$ and are based at the same singularity. There will be two sectors of angles $2\pi(a'+1)$ and $2\pi(a''+1)$  (see figure~\ref{fig:figureeight}).
A {\em pair of holes surgery} where we start with two points that are either  a singularity or a marked point of orders $b'\ge 0$ and $b''\ge 0$ and then create a boundary saddle connection at each singularity.
The pair of holes boundary has two boundary singularities of angles $\pi(2b'+3)$ and  $\pi(2b''+3)$ (see figure~\ref{fig:twoholes}).

By performing an appropriate surgery on each $T_i'$ we obtain surfaces $T_i$ that are   homeomorphic to the surfaces $S_i$ and have the same type of singularities in the interior and on the boundary. We then take $q$ cylinders $C_j$ with a marked point on each boundary. We finally combine the surfaces $T_i$ and the cylinders $C_j$ in the way prescribed by the topological type $\cC$ to produce a surface $T$ in $K^{\thick}(\epsilon,\cC)$. 
We do this by identifying pairs of boundary components by an isometry that identifies boundary singularities. The boundary components give rise to a configuration of closed saddle connections of topological type $\cC$. In fact each surface in $K^{\thick}(\epsilon,\cC)$ can be produced in this way.

The parameters used to produce surfaces in $K^{\thick}(\epsilon,\cC)$ are the following:
\begin{itemize}
\item a maybe disconnected surface in $\cH(\alpha')$.
\item a holonomy vector $\gamma$ in $B(\epsilon)$.
\item a combinatorial constant $M$ that only depends on the configuration. There is a $M:1$ correspondence between the surfaces in $K^{\thick}(\epsilon,\cC)$ and the surfaces in $\cH_1(\alpha')$. This is mainly due to the facts that at a zero of order $k$ there are $k+1$ 
sectors of angle $2\pi$ where we can produce a saddle connection in the direction of $\gamma$ and to 
possible symmetries of the surface in $K^{\thick}(\epsilon,\cC)$ (see \S~13.3. of \cite{EMZ}).
 \item the heights $h_i$ of the $q$ cylinders $C_i$ (the width is given by $|\gamma|$).
 \item for each cylinder $C_i$ a twist parameter $t_i \in [0,|\gamma|)$ that describes the relative position of the marked points on the two boundary components.
 \end{itemize}
 
\begin{remark}
If $\cH(\alpha)$ has more than one connected component than 
the correspondence between the thick part $K^{\thick}(\epsilon,\cC)$ 
of a connected component $K$ of $\cH_1(\alpha)$ and the principal boundary 
$\cH(\alpha')$ is slightly more complicated. For example, to construct a surface 
$T$ in $K^{\thick}(\epsilon,\cC)$, where $K$ is a hyperelliptic component, we must start with a surface $T'$ in the principal boundary such that all connected components $T_i'$ of $T'$ are hyperelliptic surfaces.
 So only the hyperelliptic components of some strata are in the principal boundary of  $K$. It might 
 even happen that only part of a connected component of a stratum is in the principal boundary. We still denote the principal boundary of $K^{\thick}(\epsilon,\cC)$ by $\cH(\alpha')$, although $\cH(\alpha')$ might be the union of (parts of) connected components of some strata.
 
We remark in passing that there are also some parameters used to describe the surgeries but this does not affect our computations. 
\end{remark}
If $q>1$, then to parametrize the $q$ tori we will always replace $h_q$ by $h=h_1+\dots+h_q$.
So the heights $(h_1,\dots,h_{q-1})$ are in the cone 
$\Delta^{q-1}(h)$ given by the conditions $h_i>0$, for $1\leq i\leq q-1$, and $h_1+\dots+h_{q-1}<h$.

For a given $\epsilon>0$ we define
\begin{multline*}
\cH^{\epsilon}(0^q)=
     \{(\gamma, h,h_1,\dots,h_{q-1},t_1,\dots,t_{q})\;|\;\\
      \gamma\in B(\epsilon),  (h_1,\dots,h_{q-1})\in \Delta^{q-1}(h),
           (t_1,\dots,t_{q})\in[0,|\gamma|]^q\}.\qquad
\end{multline*}
In what follows, $|\gamma|$ will always be small, 
but if no specific restriction on $|\gamma|$ is needed we will write $\cH(0^q)$.
We write $\dd\nu(T)$ for the measure 
\[
\dd\nu(T)=\dd\gamma\; \dd h\;\prod_{i=1}^{q-1} \dd h_i\,\prod_{i=1}^q\dd t_i.\]

We refer to the elements of $\cH^{\epsilon}(0^q)$ as tori as we 
obtain a torus with $q$ marked points by joining the $q$ cylinders. 
We write $\cH_1^{\epsilon}(0^q)$ for the subset of $\cH^{\epsilon}(0^q)$ of area $1$ tori, 
meaning that they satisfy the condition $h|\gamma|=1$.

We remark in passing that $\cH^{\epsilon}(0^q)$ can be interpreted as the $\epsilon$ -neighborhood of the ``cusp'' of the moduli space of flat tori with $q$ marked points  except that the marked points are already named by the way we parametrize them.
This is in contrast to the cusp of the usual moduli space $\cH^{\epsilon}(\underbrace{0,\dots,0}_{\textrm{$q$ times}})$ where the marked points can be arbitrarily named.

We denote by $\dd\nu(S)$ (resp. $\dd\nu(S')$) the measure on $\cH(\alpha)$ (resp. $\cH(\alpha')$).
It is shown in \cite{EMZ} that 
\[\dd\nu(S)=\dd\nu(S')\cdot \dd\nu(T).\]

Let  $S$ be a translation surface in $\cH_1(\alpha)$.  For $r$ a positive real number we denote by $rS\in\cH(\alpha)$ the surface we get by multiplying the flat metric on $S$ by $r$. Equivalently, if we represent $S$ by an Abelian differential $\omega$ with respect to some complex structure, then $rS$ corresponds to the Abelian differential $r\omega$ with respect to the same complex structure.
Note that we have in particular $\area(rS)=r^2\area(S)$.

If $X$ is a subset of $\cH_1(\alpha)$ then the cone $C(X)$ is defined to be \[C(X)=\{rS\,|\,0<r<1, S\in X\}\subset \cH(\alpha).\]
If we denote by $\dd\vol(S)$ the measure on  $\cH_1(\alpha)$ induced by the measure $\dd\nu(S)$ then we have
\[\dd\nu(S)=r^{\dim_\R \cH(\alpha)-1}\dd r\;\dd\vol(S).\]
So in particular one has 
\[ \vol(\cH_1(\alpha))=\dim_\R \cH(\alpha) \nu(\cH(\alpha)).\]
 
One has analogous statements for $\cH_1(\alpha')$ and $\cH_1(0^q)$ with their induced measures denoted by $\dd\vol(S')$ and $\dd\vol(T)$.

Note that there are $q+1$ complex parameters for $\cH^{\epsilon}(0^q)$ and that one has $\dim_\R\cH(\alpha)=\dim_\R\cH(\alpha')+\dim_\R\cH(0^q)$.
So by writing $n=\dim_{\C}\cH(\alpha')$ we have
\begin{align*}
\dim_{\R}\cH(\alpha')&=2n\\
\dim_{\R}\cH(0^q)&=2(q+1)\\
\dim_\R\cH(\alpha)&=2(n+q+1)
\end{align*}
We recall for completeness that if   $\alpha=(d_1,\dots,d_m)$ with $d_i\geq 1$ for $i=1,\dots,m$, then
\[\dim_\R\cH(\alpha)=2(2g+m-1).\]

\section{Siegel--Veech constants.}
 
 \subsection{General method}\label{sec:general-method}
We will describe in this section the method used to compute the Siegel--Veech constants introduced in section~\ref{sec:results}. 
Consider a stratum $\cH(\alpha)$ of Abelian differentials where $\alpha=(d_1,\dots,d_m)$ and $d_i\geq 1$ for $i=1,\dots,m$.  Suppose that $K$ is a connected component of  $\cH_1(\alpha)$ and that $\cC$ is an admissible topological type  of configurations for $K$ containing exactly $q\geq 1$ cylinders. We denote the corresponding principal boundary stratum by  $\cH(\alpha')$. Recall that $K^{\thick}(\epsilon,\cC)$ is the set of translation surfaces in $K$ that contain exactly one configuration of type $\cC$ of length smaller than $\epsilon$ but do not contain  another closed saddle connection of length smaller than $\epsilon$. 
To simplify notation we will write from now on $K^{\epsilon}$ for $K^{\thick}(\epsilon,\cC)$.

Suppose that $N(S,\cC,\epsilon)$ is one of the above mentioned counting functions that counts configurations on $S$ of length smaller than $\epsilon$ and of topological type $\C$. 
We want to evaluate $\int_{K^{\epsilon}} N(S,\cC,\epsilon) \dd\nu(S)$.

The decomposition of a surface $S\in \cH(\alpha)$ as $q$ tori with marked points on the boundary and a surface $S'$ in $\cH(\alpha')$ on which one performs surgeries ($M$ choices) gives the following parametrisation of the cone $C(K^{\epsilon})$: before performing the surgeries, a surface $S$ in $C(K^{\epsilon})$ is made up of $s S_1'$ and $tT_1$ for some scalars $s,t$ where $S_1'\in \cH_1(\alpha')$ and $T_1\in \cH_1(0^q)$. 
The conditions are the following :\\
 (i) the total area $s^2+t^2$ of $S$ satisfies $s^2+t^2\leq 1$; \\
The area $1$ surface $\frac{1}{\sqrt{s^2+t^2}}S$ is in $K^{\epsilon}$
so the waist curve of $\frac{t}{\sqrt{s^2+t^2}}T_1$ is smaller than $\epsilon$ which means
that the waist curve of $T_1$ is smaller than
$\epsilon\frac{\sqrt{s^2+t^2}}{t}$. So the second condition
is: \\
(ii) $T_1$ has to lie in $\cH_1^{\epsilon'}(0^q)$, where 
$\epsilon'=\epsilon\frac{\sqrt{s^2+t^2}}{t}$.

We have 
\begin{multline}\label{initial-integral}
\int_{C(K^{\epsilon})} N(S,\cC,\epsilon) \dd\nu(S)=\\
=M\vol(\cH_1(\alpha'))
\int\limits_0^1 s^{2n-1} \dd s
\int\limits_0^{\sqrt{1-s^2}} t^{2q+1}\dd t\int\limits_{\cH_1^{\epsilon'}(0^q)}N(S,\cC,\epsilon)\dd \vol (T)+ o(\epsilon^2),
\end{multline}
where $\epsilon'=\epsilon'(s,t)=\frac{\epsilon\cdot\sqrt{s^2+t^2}}{t}$. Note that if we construct $S\in C(K^{\epsilon})$ as described above then $S$ contains exactly one configuration of type $\cC$ and of width less than $\epsilon$, so $N(S,\cC,\epsilon)$ can be replaced by the weight with which we count configurations.

This integral is a simplification of the integral from~\cite{EMZ}, page 133. In~\cite{EMZ} the statement is about the volume of  $C(K^{\epsilon})$ (in our notaton) as the authors only count configurations with weight one, so $N(S,\cC,\epsilon)=1$ on $K^{\epsilon}$.

 \subsection{Mean area of a cylinder}\label{sec:mean-area}
  
Fix an admissible configuration $\cC$ in a connected component $K$ of a stratum 
$\cH_1(\alpha)$ that comes with $q\geq 1$ cylinders and let $p$ be a real number $p\ge 0$.

\begin{proof}[Proof of Theorem~\ref{th:area-p}]
 We choose and fix one of the named closed saddle connection of $\cC$ that bounds a cylinder and call this cylinder the first one. 
We denote by $N_{\area_1^p}(S,\cC,\epsilon)$ the number of
configurations on $S$ of type $\cC$ of length at most $\epsilon$,
counted with   weight the p-th power of  the area of the first cylinder.
We denote the corresponding Siegel-Veech constant by 
$c_{\area_1^p}(K,\cC)$.

As described above, we decompose a surface $S$ in $C(K^{\epsilon})$ as $sS_1'$ and $tT_1$.
We use  the usual parameters
  $\gamma,h,h_1,\dots,h_{q-1},t_1,\dots,t_q$ to parametrize 
$T_1\in \cH_1^{\epsilon}(0^q)$. We assume the notation chosen so that the first cylinder of $S$
corresponds to the first cylinder of $T_1$. So the
area of the first cylinder of $S$ is $t^2 h_1w$, where
 $w=|\gamma|$.
We use the following weight:
\[\left(\frac{t^2h_1w}{s^2+t^2}\right)^p.\]
This weight is invariant under scaling of $S$ and if the area
$s^2+t^2$ of $S$ equals $1$ then 
 it reduces to $(t^2h_1w)^p$ which is the $p$-th power of the area  of the first cylinder of $S$. 
A surface $S\in C(K^{\epsilon})$ contains exactly one configuration of type $\cC$
of width at most $\epsilon$, so 
we need to evaluate the integral of equation~\eqref{initial-integral} for 
\[N(S,\cC,\epsilon)=N_{\area_1^p}(S,\cC,\epsilon)=
\left(\frac{t^2}{s^2+t^2}\right)^p (h_1w)^p. \]

The first step to do this is to show that

\begin{lemma}\label{lem:cusp-p}
\begin{equation*}
\int_{\cH_1^{\epsilon}(0^q)}(h_1w)^p \dd\vol (T)=\frac{2\pi\epsilon^2}{(p+1)\cdot (p+2)\cdots (p+q-1)},
\end{equation*}
where  for $q=1$ the denominator is by convention equal to $1$.
\end{lemma}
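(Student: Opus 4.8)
The plan is to evaluate the ``cone integral'' $\displaystyle J:=\int_{C(\cH_1^\epsilon(0^q))}(h_1w)^p\,\dd\nu(T)$ in two different ways and then compare. First, by the cone decomposition $\dd\nu(T)=r^{\dim_\R\cH(0^q)-1}\dd r\,\dd\vol(T)=r^{2q+1}\dd r\,\dd\vol(T)$, together with the observation that $T\mapsto rT$ multiplies $h_1w$ by $r^2$ (both $h_1$ and $w=|\gamma|$ are lengths, hence scale by $r$), one obtains
\[
J=\Big(\int_0^1 r^{2p+2q+1}\,\dd r\Big)\int_{\cH_1^\epsilon(0^q)}(h_1w)^p\,\dd\vol(T)=\frac{1}{2(p+q+1)}\int_{\cH_1^\epsilon(0^q)}(h_1w)^p\,\dd\vol(T),
\]
so it suffices to prove $J=\dfrac{\pi\epsilon^2}{(p+q+1)(p+1)(p+2)\cdots(p+q-1)}$, with the usual convention that the empty product equals $1$ when $q=1$.

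On the other hand, $J$ will be computed directly in the coordinates $(\gamma,h,h_1,\dots,h_{q-1},t_1,\dots,t_q)$, where $\dd\nu(T)$ is just Lebesgue measure. The first task is to describe $C(\cH_1^\epsilon(0^q))$: a surface $rT$ with $T$ of area one and waist $w<\epsilon$ and $0<r<1$ has area $r^2<1$ and waist-over-$\sqrt{\text{area}}$ equal to $w<\epsilon$, so unwinding the scaling shows that $C(\cH_1^\epsilon(0^q))$ consists exactly of those tuples satisfying the simplex condition $(h_1,\dots,h_{q-1})\in\Delta^{q-1}(h)$, the twist condition $t_i\in[0,w]$, and $w/\epsilon^2<h<1/w$ (which in particular forces $w<\epsilon$). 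Then I would integrate iteratively by Fubini: the $q$ twists contribute $w^q$; the heights over $\Delta^{q-1}(h)$ weighted by $h_1^p$ give $h^{p+q-1}$ times the Dirichlet integral $\int_{\Delta^{q-1}(1)}u_1^p\,\dd u_1\cdots\dd u_{q-1}=\Gamma(p+1)/\Gamma(p+q)=1/\big((p+1)\cdots(p+q-1)\big)$, which is precisely where the product in the denominator is manufactured (for $q=1$ there is no such integral, $h_1=h$, and this factor is $1$); the remaining $h$-integral $\int_{w/\epsilon^2}^{1/w}h^{p+q-1}\,\dd h$ equals $\frac1{p+q}\big(w^{-(p+q)}-w^{p+q}\epsilon^{-2(p+q)}\big)$; and the final $\gamma$-integral in polar coordinates $\dd\gamma=w\,\dd w\,\dd\theta$ over $B(\epsilon)$ yields $\pi\epsilon^2\cdot\frac{p+q}{p+q+1}$. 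Assembling the factors $w^{p}\cdot w^{q}$ with these contributions gives $J=\frac{1}{(p+1)\cdots(p+q-1)}\cdot\frac1{p+q}\cdot\pi\epsilon^2\frac{p+q}{p+q+1}$, the claimed value; comparing with the cone formula gives $\int_{\cH_1^\epsilon(0^q)}(h_1w)^p\,\dd\vol(T)=2(p+q+1)\,J=\frac{2\pi\epsilon^2}{(p+1)(p+2)\cdots(p+q-1)}$.

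The step I expect to be the main obstacle is pinning down $C(\cH_1^\epsilon(0^q))$ correctly: it is \emph{not} the naive region $\{hw<1,\ w<\epsilon\}$ but the coupled region $\{w/\epsilon^2<h<1/w\}$, and replacing it by the naive set would destroy the cancellation that collapses the two powers of $\epsilon$ appearing in the $\gamma$-integral into a single $\epsilon^2$ with the right constant. Once this is correctly set up, everything else is a routine iterated integral, the only non-elementary ingredient being the Dirichlet (Beta) integral $\int_{\Delta^{q-1}}u_1^p\,\dd u=\Gamma(p+1)/\Gamma(p+q)$.
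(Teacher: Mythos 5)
Your proposal is correct and follows essentially the same route as the paper: both reduce to a cone integral over $C(\cH_1^{\epsilon}(0^q))$ in the explicit coordinates, identify the same domain $w/\epsilon^2\le h\le 1/w$ (hence $w<\epsilon$) with heights in $\Delta^{q-1}(h)$ and twists in $[0,w]^q$, and the same simplex/Beta integral produces the factor $(p+1)\cdots(p+q-1)$. The only (harmless) difference is that you extend the weight off the unit-area locus as the degree-$2p$ homogeneous function $(h_1w)^p$, so the cone-to-hypersurface comparison gives the factor $2(p+q+1)$, whereas the paper uses the scale-invariant extension $(h_1/h)^p$ and the factor $2(q+1)$; the two bookkeepings yield the same value $\frac{2\pi\epsilon^2}{(p+1)\cdots(p+q-1)}$.
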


\begin{proof} For $q=1$ we have $h_1w=hw=1$, so the computation is a simplified
version of the general case. Assume that $q>1$.
We will first integrate over the domain $C(\cH_1^{\epsilon}(0^q))$, still using the parameters
$\gamma,h,h_1,\dots,h_{q-1},t_1,\dots,t_q$ for 
$T\in C(\cH_1^{\epsilon}(0^q))$. 
To have a weight that is invariant under scaling of $T$ and that becomes
 $(h_1w)^p$ for an area one torus $T_1$, we use the weight
 \[\left(\frac{h_1w}{hw}\right)^p=\left(\frac{h_1}{h}\right)^p.\]
The domain of integration for $T\in C(\cH_1^{\epsilon}(0^q))$ is as follows:
The area $w h$ of $T$ satisfies $w h\leq 1$. 
We have $\frac{1}{\sqrt{wh}} T\in \cH_1^{\epsilon}(0^q)$, so the length $\frac{w}{\sqrt{wh}}$ of the waist curve of $\frac{1}{\sqrt{wh}} T$ is smaller than $\epsilon$, which gives $h\geq \frac{w}{\epsilon^2}$.
(See figure~\ref{fig:domain:of:integration:2}.)

\begin{figure}[!htb]
\begin{center}\includegraphics[scale=1]{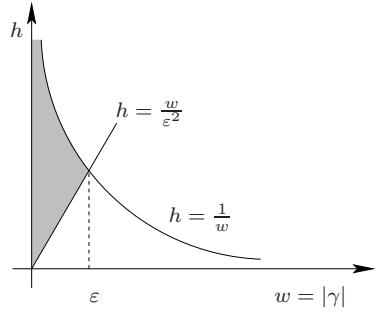}\end{center}
\caption{\label{fig:domain:of:integration:2}
Domain of integration for
$C(\cH^{\epsilon}_1(0))$}
\end{figure}

\begin{multline}\label{eq:cusp}
\int\limits_{C(\cH_1^{\epsilon}(0^q))}\left(\frac{h_1}{h}\right)^p\dd\vol (T)=
\int\limits_0^{2\pi} \dd\theta\int\limits_0^\epsilon w\; \dd w \int\limits_{w/\epsilon^2}^{1/w} \dd h
           \int_0^h\left( \frac{h_1}{h}\right)^p\dd h_1\cdot\\
           \cdot\int\limits_ {\Delta^{q-2}(h-h_1)} \dd h_2\dots \dd h_{q-1} 
        \int\limits_{ [0,w]^q}\dd t_1\dots \dd t_{q}  .
\end{multline}

The volume of the cone  $\Delta^{q-2}(h-h_1)$ is $\ds\frac{(h-h_1)^{q-2}}{(q-2)!}$ and 
the volume of the cube $[0,w]^q$ is $w^q$. Using the change of variables $u=h_1/h$,  
the right hand side of equation~\eqref{eq:cusp} then evaluates to
\[\frac{\pi\epsilon^2}{(q-2)!} \frac{\B(p+1,q-1)}{q+1}=
\frac{\pi\epsilon^2}{q+1}\frac{1}{(p+1)\cdots (p+q-1)},\]
where $\B(\cdot ,\cdot)$ is the Beta function defined by~\eqref{def:beta}. We get the
right hand side using relations~\eqref{eq:beta} and~\eqref{eq:gamma}.

The weight $f(S)=(h_1/h)^p$ satisfies $f(rS)=f(S)$ and, if $S_1\in \cH_1^{\epsilon}(0^q)$,
$f(S_1)=(h_1w)^p$, so

\begin{multline*}
\int\limits_{C(\cH_1^{\epsilon}(0^q))}\left(\frac{h_1}{h}\right)^p\dd\vol (T)=\\
=\int\limits_0^1 r^{2(q+1)-1}\dd r\int\limits_{\cH_1^{\epsilon}(0^q)}f(rS_1)\dd \vol(S_1)
=\frac{1}{2(q+1)}\int\limits_{\cH_1^{\epsilon}(0^q)}(h_1w)^p\dd \vol(S_1),
\end{multline*}
which completes the proof of the lemma.
\end{proof}

To continue the proof of Theorem~\ref{th:area-p}, we evaluate equation~\eqref{initial-integral}.
We
use lemma~\ref{lem:cusp-p} with 
$\epsilon'=\cfrac{\epsilon\cdot\sqrt{s^2+t^2}}{t}$ and integrate the function
$\left(\cfrac{t^2}{s^2+t^2}\right)^{p}$ that is the remaining part of 
$N_{\area_1^p}(S,\cC,\epsilon)$. So equation~\eqref{initial-integral} becomes

\[
\int_{C(K^{\epsilon})}N_{\area_1^p}(S,\cC,\epsilon)  \dd\nu(S)
=\frac{M  \cdot\Vol(\cH_1(\alpha'))\cdot2\pi\epsilon^2}{(p+1)\cdot (p+2)\cdots (p+q-1)}J_p+ o(\epsilon^2),
\]
where 
\[J_p=\int_0^1 s^{2n-1} \dd s
\int_0^{\sqrt{1-s^2}} t^{2q+1}\cdot
\left(\frac{t^2}{s^2+t^2}\right)^{p-1}
\,\,\dd t\]

Using polar coordinates $s=r\cos\theta$, $t=r\sin\theta$  we get
\[J_p(\cC)=\frac{1}{2(n+q+1)}\int_0^{\frac{\pi}{2}}(\cos\theta)^{2n-1}(\sin\theta)^{2p+2q-1}\dd\theta.\]
Using $u=\cos^2\theta$ we get
\begin{equation}\label{eq:area-p}
J_p(\cC)=\frac{\B(n,q+p)}{4(n+q+1)}=\frac{1}{4(n+q+1)}\frac{(n-1)!}{(p+q)\cdots (p+q+n-1)}.
\end{equation}

Using the fact that for $S\in C(K^{\epsilon})$, $N_{\area^p}(S,\cC,\epsilon)=\left(\frac{t^2}{s^2+t^2}\right)^p$ 
is invariant under scaling of $S$ and reduces to the initial definition of $N_{\area^p}(S_1,\cC,\epsilon)$ if 
$S_1\in K^{\epsilon}$, we can show as in the proof of lemma~\ref{lem:cusp-p} that 
\[\int\limits_{K^{\epsilon}}N_{\area_1^p}(S_1,\cC,\epsilon)\dd \vol(S_1)=
2(n+q+1)\int\limits_{C(K^{\epsilon})}N_{\area_1^p}(S,\cC,\epsilon)  \dd\nu(S),
   \]
where $2(n+q+1)=\dim_\R\cH(\alpha)$.
And so
\begin{multline*}
\int\limits_{K^{\epsilon}}N_{\area_1^p}(S,\cC,\epsilon)  \dd \vol(S)=\\
=M\cdot\pi\epsilon^2\cdot\Vol(\cH_1(\alpha'))\cdot\frac{(n-1)!}{(p+1)\cdots (p+q+n-1)}+ 
o(\epsilon^2).
\end{multline*}
Using  equation~\eqref{Siegel--Veech} we conclude that
\[c_{\area_1^p}(K,\cC)=
M\cdot\frac{\Vol(\cH_1(\alpha'))}{\Vol(K)}\cdot\frac{(n-1)!}{(p+1)\cdots (p+q+n-1)}.\]
Note that if we define $N_{\area_i^p}(S,\cC,\epsilon)$ in the same way as 
$N_{\area_1^p}(S,\cC,\epsilon)$, except that we use the area of the $i$-th cylinder,
for $i=1,\cdots,q$, then we have
$N_{\area^p}(S,\cC,\epsilon)=\sum_i N_{\area_i^p}(S,\cC,\epsilon)$.
It follows that $c_{\area^p}(K,\cC)=qc_{\area_1^p}(K,\cC)$, which completes
 the proof of Theorem~\ref{th:area-p}.
\end{proof}

\begin{remark}
(a)
 We note that the  volume  of  the  stratum  $\cH_1(\alpha')$ of  disconnected surfaces was   computed  in~\cite{EMZ}, equation (12); writing $\cH(\alpha')=\Pi_{i=1}^p\cH(\alpha_i')$ and $n_i=\dim_\C\cH(\alpha_i')$, we have
 \[
\Vol(\cH_1(\alpha'))=
\cfrac{1}{2^{p-1}}\cdot
\cfrac{\prod_{i=1}^p (n_i-1)!}
{(n-1)!}\cdot
\prod_{i=1}^p\Vol(\cH_1(\alpha'_i)).
\]
(b) As an example we consider the moduli space of tori. To have a configuration we need to mark one regular point.
The only possible topological type $\cC$ of configuration is  a closed saddle connection based at this regular point.
 We get, using Lemma~\ref{lem:cusp-p} for $p=0$ and $q=1$,
\[
c_{\conf}(\cH_1(0),\cC)=
\lim_{\epsilon\to 0}\cfrac{1}{\pi\epsilon^2}\cdot\cfrac{\Vol(\cH_1^{\epsilon}(0))}{\Vol(\cH_1(0))}=
\cfrac{1}{\pi\epsilon^2}\cdot\cfrac{2\pi\epsilon^2}{\pi^2/3} = \cfrac{6}{\pi^2}=\cfrac{1}{\zeta(2)},
\]
which is  the  well-known  factor  for  the proportion of  coprime lattice points in $\Z\oplus\Z$.
\end{remark}

 \subsection{Mean area of the periodic region}\label{sec:mean-area-conf}
 Fix an admissible topological type of configuration $\cC$ for a connected component $K$ 
of a stratum $\cH_1(\alpha)$ that comes with $q\geq 1$ cylinders.
Recall that  $N_{\area^p,\conf}(S,\cC,L)$ denotes the 
$p$-th power of the
total area of the periodic region (union of the cylinders) on $S$ coming from a 
configuration of topological
type $\cC$ whose length is at most $L$.

\begin{proof}[Proof of theorem~\ref{th:area-p-conf}]
 The argument is a special case of the argument in section~\ref{sec:mean-area}.
By decomposing as before a surface $S$ in $C(K^{\epsilon})$ as $sS_1'$ and $tT_1$ we use 
the  weight 
\[N_{\area^p,\conf}(S,\cC,\epsilon)=\left(\frac{t^2}{s^2+t^2}\right)^p.\]
So by taking $p=0$ in lemma~\ref{lem:cusp-p},
\begin{equation}\label{lem:cusp}
\int_{\cH_1^{\epsilon}(0^q)}\dd\vol (T)=\frac{2\pi \epsilon^2}{(q-1)!}.
\end{equation}

We then have 
\[
\int_{C(K^{\epsilon})}N_{\area^p,\conf}(S,\cC,\epsilon)  \dd\nu(S)
=M  \cdot\Vol(\cH_1(\alpha'))\cdot\frac{2\pi\epsilon^2}{(q-1)!}J_p+ o(\epsilon^2),
\]
where $J_p$ satisfies relation~\eqref{eq:area-p}. We conclude as in section~\ref{sec:mean-area}
that
\[
\int\limits_{K^{\epsilon}}N_{\area^p,\conf}(S,\cC,\epsilon)  \dd \vol(S)
=\frac{M\cdot\pi\epsilon^2\cdot\Vol(\cH_1(\alpha'))\cdot (n-1)!}{(q-1)!\cdot(p+q)\cdots (p+q+n-1)}+ o(\epsilon^2).
\]
It suffices to apply relation~\eqref{Siegel--Veech}.
\end{proof}

\subsection{Configurations with periodic regions of large area.}\label{sec:allcylindersp}

Fix an admissible configuration $\cC$ in a connected component $K$ of a stratum $\cH_1(\alpha)$ that comes with $q\geq 1$ cylinders and let $x\in [0,1)$ be a real parameter.
Recall that  $N_{\conf,A\ge x}(S,\cC,\epsilon)$ denotes the number of configurations  on $S$ of type $\cC$ of 
length  smaller than  $\epsilon$ and such that the total area of the $q$ cylinders is at least $x$ (of the area one surface $S$).  
 
 \begin{proof}[Proof of Theorem~\ref{th:big-area}]
We use a modification of the argument from section~\ref{sec:mean-area}. Here we consider the subset of  $K^{\epsilon}$ consisting of surfaces $S$ whose area of the periodic part is at least $x$ (of
the area $1$ surface $S$).
Construct $S$ in the cone of this set using  $s S_1'$ and $tT_1$ for some scalars $s,t$ and $S_1'\in \cH_1(\alpha')$ and $T_1\in \cH_1^{\epsilon'}(0^q)$. We need to evaluate the integral in equation~\eqref{initial-integral}
with an additional condition : when scaling $S$ by $\frac{1}{s^2+t^2}$ we get a surface of area one that satisfies 
\[\area\left( \cfrac{t}{\sqrt{s^2+t^2}}T_1\right)=  \cfrac{t^2}{s^2+t^2}>x\qquad\iff
\qquad t\geq\sqrt{ \cfrac{x}{1-x}}\;s.\]
 We count a configuration that satisfies this additional constraint with weight $1$, so equation~\eqref{initial-integral} becomes

\begin{multline*}
\int\limits_{C(K^{\epsilon})} N_{\conf,A>x}(S,\cC,\epsilon) \dd\nu(S)\\
=M\vol(\cH_1(\alpha'))
\int\limits_0^1 s^{2n-1} \dd s
\int\limits_{\sqrt{\frac{x}{1-x}}s}^{\sqrt{1-s^2}} t^{2q+1}\dd t
\int\limits_{\cH_1^{\epsilon'}(0^q)}\dd\vol (T)+ o(\epsilon^2),
\end{multline*}
where $\epsilon'=\cfrac{\epsilon\cdot\sqrt{s^2+t^2}}{t}$.

Using relation~\eqref{lem:cusp}, the right hand side becomes 
\[\frac{M\cdot2\pi\epsilon^2}{(q-1)!}\cdot\Vol(\cH_1(\alpha'))\cdot I_x+ o(\epsilon^2)\] where
\[
I_x=\int_0^{\sqrt{1-x}}s^{2n-1}\int_{\sqrt{\frac{x}{1-x}}s}^{\sqrt{1-s^2}}t^{2q+1}
       \frac{s^2+t^2}{t^2}\dd t\dd s.
\]

The domain of integration for $s$ and $t$ is described in Figure \ref{fig:domaininte}.

\begin{figure}[!htb]
\begin{center}\includegraphics[scale=1]{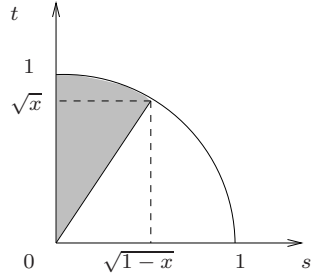}\end{center}
\caption{\label{fig:domaininte}Domain of integration.}
\end{figure}

Using polar coordinates $s=r\cos\theta$, $t=r\sin\theta$ and setting $\alpha=\arccos\sqrt{1-x}$ we get
\[I_x=\frac{1}{2(n+q+1)}\int_\alpha^{\frac{\pi}{2}}(\cos\theta)^{2n-1}(\sin\theta)^{2q-1}\dd\theta.\]
Using the change of variables $u=\cos^2\theta$ we get
\[
I_x=\frac{1}{4(n+q+1)}\int_0^{1-x}u^{n-1}(1-u)^{q-1}\dd u =\frac{B(1-x;n,q)}{4(n+q+1)},
\]
where $B(\cdot;\cdot,\cdot)$ is the incomplete Beta function as defined in equation~\eqref{def:incomplete-beta}.

Note that we have $c_{\conf}(K,\cC)=c_{\conf,A\ge 0}(K,\cC)$, so up to the same constant, the integral used to compute $c_{\conf,A\ge x}(K,\cC)$, resp $c_{\conf}(K,\cC)$ is given by $I_x$, resp $I_0$ (or equivalently $J_0$, see equation~\eqref{eq:area-p}), so \[\frac{c_{\conf,A\ge x}(K,\cC)}{c_{\conf}(K,\cC)}=\frac{I_x}{I_0}=\I(1-x;n,q).\]

We proved Theorem~\ref{th:big-area} of section~\ref{sec:results}, where, to expand $\I(1-x;n,q)$, we use Lemma~\ref{lem:combi4}. 
\end{proof}

%____________________________________________________________
\subsection{Configurations with a cylinder of large area}\label{sec:onecylindersp}

Fix an admissible configuration $\cC$ in a connected component $K$ of a stratum $\cH_1(\alpha)$ that comes with $q\geq 1$ cylinders and let $x$ be a parameter that satisfies $0\le x<1$.

We choose and fix one of the named closed saddle connection of $\cC$ that bounds a cylinder and call this cylinder the first one. 
Recall that $N_{\conf,A_1\ge x}(S,\cC,L)$ is the number of configurations of type  $\cC$ of length at most  $L$  and such that the area of the first cylinder is at least $x$ (of the area one surface $S$). 

\begin{proof}[Proof of theorem~\ref{th:Vorobets:constant}]
 Suppose that we have $q\geq 1$ cylinders. The argument is as in section~\ref{sec:allcylindersp} with the following modification: We replace ``$\area$'' by  ``area of the first cylinder'' in the condition  $\area \left( \frac{t}{\sqrt{s^2+t^2}}T_1\right)=  \frac{t^2}{s^2+t^2}>x$. 
Denote by $Cusp_a(\epsilon')$ the subset of $\cH_1^{\epsilon'}(0^q)$ of tori $T_1$ such that the area of the first cylinder of $T_1$ is at least $a=x\frac{s^2+t^2}{t^2}$. 
Using the usual parameters $(\gamma,h,h_i,t_j)$, a torus $T$ is in the cone of  $Cusp_a(\epsilon')$ if the area one torus $\frac{1}{\sqrt{wh}}T$ is in $Cusp_a(\epsilon')$, so the parameters for $T$ have the additional constraint $\frac{wh_1}{wh}>a$.

To compute the volume of $Cusp_a(\epsilon')$ we proceed as in the proof of lemma~\ref{lem:cusp-p}
for $x=0$.
The only difference is that we replace the condition $0\leq h_1\leq h$ by $ah\leq h_1\leq h$. So
$(h_1,\dots,h_{q-1})$ is in a cone whose volume is $ \frac{((1-a)h)^{q-1}}{(q-1)!}$. 
 So the volume is as in equation~\eqref{lem:cusp} except that we have an extra factor $(1-a)^{q-1}$. So we need to integrate 
\begin{equation}\label{int:Veech}
 I'_x=\int_0^{\sqrt{1-x}}s^{2n-1}\int_{\sqrt{\frac{x}{1-x}}s}^{\sqrt{1-s^2}}t^{2q+1}
         (1-a)^{q-1}\frac{s^2+t^2}{t^2}\;\dd t\dd s.
\end{equation}
Using polar coordinates $s=r\cos\theta$, $t=r\sin\theta$ followed by the change of variables $w=\frac{\cos^2\theta}{1-x}$  we get
\begin{equation}\label{eq:Veech}
I_x'=\frac{(1-x)^{n+q-1}}{4(n+q+1)}\;B(n,q).
\end{equation}
Note that $c_{\conf}(K,\cC)=c_{\conf,A_1\ge 0}(K,\cC)$, so $\frac{c_{\conf,A_1\ge x}(K,\cC)}{c_{\conf}(K,\cC)}=\frac{I_x'}{I_0'}$ as claimed.
(Recall that $\dim_\C \cH(\alpha)=n+q+1$.)
\end{proof}

\begin{remark} 
For $q=1$ we have  $c_{\conf,A_1\ge x}(K,\cC)=c_{\conf,A\ge x}(K,\cC)$ (see Theorem~\ref{th:big-area}).
\end{remark}

%-----------------------------------------
 \subsection{Correlation between the area of two cylinders.}\label{sec:correlation}

Let $\cC$ be an admissible  configuration for a connected component $K$ that comes with at least two cylinders.
Choose (and fix) two cylinders and let $x,x_1\in [0,1)$. Recall that we denote by $N_{A_2\geq x, A_1\geq x_1}(S,\cC,L)$ the number of configurations of length at least $L$ such that the area $A_1$ of the first cylinder is at least $x_1$ and such that the area $A_2$ of the second cylinder is at least $x(1-A_1)$.  We denote by $c_{A_2\geq x, A_1\geq x_1}(K,\cC)$ the corresponding Siegel--Veech constant. To simplify notation we will write $c_{A_1\geq x_1}(K,\cC)$ instead of $c_{\conf,A_1\geq x_1}(K,\cC)$.
  
\begin{proof}[Proof of theorem~\ref{th:correlation}]
 We proceed as in the proof of Theorem~\ref{th:Vorobets:constant} (see section~\ref{sec:onecylindersp}). Using the same notation, we have the condition that the area $\tilde A_1$ of the first cylinder of $\frac{t}{\sqrt{s^2+t^2}} T_1$ is at least $x_1$ and the area $\tilde A_2$ of the second cylinder of $\frac{t}{\sqrt{s^2+t^2}} T_1$ is at least $x(1-\tilde A_1)$. This means that the area $A_1$ of the first cylinder of $T_1$ satisfies  $A_1\geq a_1=x_1\frac{s^2+t^2}{t^2}$ and the area $A_2$ of the second cylinder is at least $x(1-\tilde A_1)\frac{s^2+t^2}{t^2}$ which gives $A_2\geq x \left(\frac{s^2+t^2}{t^2}- A_1\right)=a-xA_1$, where $a=x\frac{s^2+t^2}{t^2}$.
  
 Denote by $\Cusp_{a_1,a}(\epsilon')$ the subset of tori in $\cH_1^{\epsilon'}(0^q)$ that satisfies these conditions.
 Using the usual parameters $(\gamma,h,h_i,t_j)$, a torus $T$ is in the cone $C(\Cusp_{a_1,a}(\epsilon'))$ if and only if $T_1=\frac{1}{\sqrt{wh}}T$ is in $\Cusp_{a_1,a}(\epsilon')$. So the areas $A_1$ and $A_2$ of the first and second cylinders of $T_1$ must satisfy $A_1=\frac{wh_1}{wh}\geq a_1$ and $A_2=\frac{wh_2}{wh}\geq a- x\frac{wh_1}{wh}$.
 So we get
 \[
 a_1h\leq h_1\leq h, \quad (*)\qquad\qquad
 ah-xh_1\leq h_2\leq h-h_1. \quad (**)
 \]
 Equation $(**)$ has a solution if and only if $ah-xh_1\leq h-h_1$ which can be written as $h_1\leq \frac{1-a}{1-x}h$ so we need to modify $(*)$:
 \[
 a_1h\leq h_1\leq  \frac{1-a}{1-x}h.\quad (*')
 \]
Equation $(*')$ has a solution if and only if $a_1\leq \frac{1-a}{1-x}$. This translates into $x_2 \frac{s^2+t^2}{t^2}\leq 1$, where $x_2=x+x_1(1-x)$, which in turn becomes
 \[t\geq\sqrt{\frac{x_2}{1-x_2}} s.\]

 We also have $0\leq h_1\leq h$ and $0\leq h_2\leq h-h_1$. But  for all possible $t,s$ we have $a_1\geq 0$, $ah-xh_1\geq 0$, and $\frac{1-a}{1-x}\leq 1$, so $h_1$  can take all values between $a_1 h$ and $\frac{1-a}{1-x}h$ and $h_2$ can take all values between $ah-xh_1$  and $h-h_1$.

The computation of the volume of $\Cusp_{a_1,a}(\epsilon')$ is as in the proof of 
Lemma~\ref{lem:cusp-p} for $x=0$, except that we replace 
\[\int\limits_{\Delta^{q-1}(h)} \dd h_1\dots \dd h_{q-1}=\frac{h^{q-1}}{(q-1)!}\]
by
\begin{multline*}
\int\limits_{a_1h}^{\frac{1-a}{1-x}h}\;\dd h_1\int\limits_{ah-ph_1}^{h-h_1}\; \dd h_2\int\limits_{\Delta^{q-3}(h-h_1-h_2)} \dd h_3\dots \dd h_{q-1}\\
   =\frac{\left[(1-a)-(1-x)a_1\right]^{q-1}}{1-x} \frac{h^{q-1}}{(q-1)!}
   =\frac{(1-a_2)^{q-1}}{1-x} \frac{h^{q-1}}{(q-1)!},
\end{multline*}
where $a_2=x_2\frac{s^2+t^2}{t^2}$.

So the volume is as in equation~\eqref{lem:cusp} except that we have an extra factor $\frac{(1-a_2)^{q-1}}{1-x}$. So we need to integrate 
\[ \frac{1}{1-x} \int_0^{\sqrt{1-x_2}}s^{2n-1}\int_{\sqrt{\frac{x_2}{1-x_2}}s}^{\sqrt{1-s^2}}t^{2q+1}(1-a_2)^{q-1} \frac{s^2+t^2}{t^2}\;\dd t\dd s.\]
Note that up to the factor $1/(1-x)$ this is $I_{x_2}'$ as defined in~\eqref{int:Veech}.
The integral used to compute $c_{X_1\geq x_1}(K,\cC)=c_{\conf,X_1\geq x_1}(K,\cC)$ is $I_{x_1}'$, so we conclude
\[\frac{c_{A_2\geq x, A_1\geq x_1}(K,\cC)}{c_{A_1\geq x_1}(K,\cC)}=
\frac{I_{x_2}'}{1-x}\cdot\frac{1}{I_{x_1}'}.\]
Using equation~\eqref{eq:Veech} and $1-x_2=(1-x)(1-x_1)$ we find
\[\frac{c_{A_2\geq x, A_1\geq x_1}(K,\cC)}{c_{A_1\geq x_1}(K,\cC)}=
\frac{(1-x_2)^{n+q-1}}{(1-x)(1-x_1)^{n+q-1}}=(1-x)^{n+q-2}.\]
\end{proof}

\section{Extremal properties of configurations}

\subsection{Maximal total mean area of a configuration}\label{sec:maximal-abelian}

Consider an admissible topological type $\cC$ of configuration for some 
connected component $K$ of some stratum $\cH(\alpha)$, where $\alpha=(d_1,\dots,d_m)$ satisfies $d_i\geq 1$, for  $i=1,\dots,m$. We denote by $q(\cC)$ the number of cylinders that come with $\cC$.
In this section we prove Theorem~\ref{prop:qmax:dim:ab}, so we look for a topological type of configuration $\cC$ (that is admissible for some connected component $K$) that maximizes
\[c_{\MeanArea}(K,\cC)=
\frac{c_{\area}(K,\cC)}{c_{\conf}(K,\cC)}=
\frac{q(\cC)}{2g+m-2},\]
where for the last equality we used Corollary~\ref{cor:mean-area-p} and $c_{\cyl}(K,\cC)=q(\cC)c_{\conf}(K,\cC)$.

For a given stratum $\cH(\alpha)$, we start by determining the maximal possible number of cylinders $q_{\max}(\alpha)$ that can come from an admissible topological type of configuration for $\cH(\alpha)$ :
\[q_{\max}(\alpha)=\max_{\cC \text{ in } \cH(\alpha)} q(\cC).\]

 It is shown in~\cite{EMZ} that topological types of configurations can be constructed by creating  singular points of the following three types: 
 \begin{enumerate}
\item a cylinder, followed by $k\geq 1$ surfaces $S_i$ of genus $g_i\geq 1$ with figure eight boundary, followed by a cylinder.
See figure~\ref{fig:type I} for $k=3$ and $g_i=1$, $i=1,2,3$. We say that the newborn singularity is  of type $I$.
 \item a cylinder, followed by $k\geq 0$ surfaces $S_i$ of genus $g_i\geq 1$ with figure eight boundary, followed by a surface $S_{k+1}$ of genus $g_{k+1}\geq 1$ with a pair of holes boundary.
See Figure~\ref{fig:type II} for $k=2$ and $g_i=1$, $i=0,1,2$. We say that the newborn singularity is  of type  $II$.
 For $k=0$ we just have a cylinder followed by a surface with a pair of holes boundary.
 We might also reverse the order: a pair of holes torus followed by $k\geq 0$ figure eight tori followed by a cylinder.
  \item a singularity of type $III$, which is obtained from a singularity of type $II$ by  replacing the cylinder by a surface with a pair of holes boundary. As for surfaces of type $II$ 
  there might be  no surface with a figure eight boundary.
  See Figure~\ref{fig:type III} where all of the surfaces are of genus $1$ and where we have two surfaces with figure eight boundary.
 \end{enumerate}
\begin{figure}[!htb]
\begin{center}\includegraphics[scale=0.4]{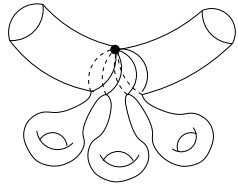}\end{center}
\caption{Block of surfaces creating a zero of type $I$}\label{fig:type I}
\end{figure}

\begin{figure}[!htb]
\begin{center}\includegraphics[scale=0.4]{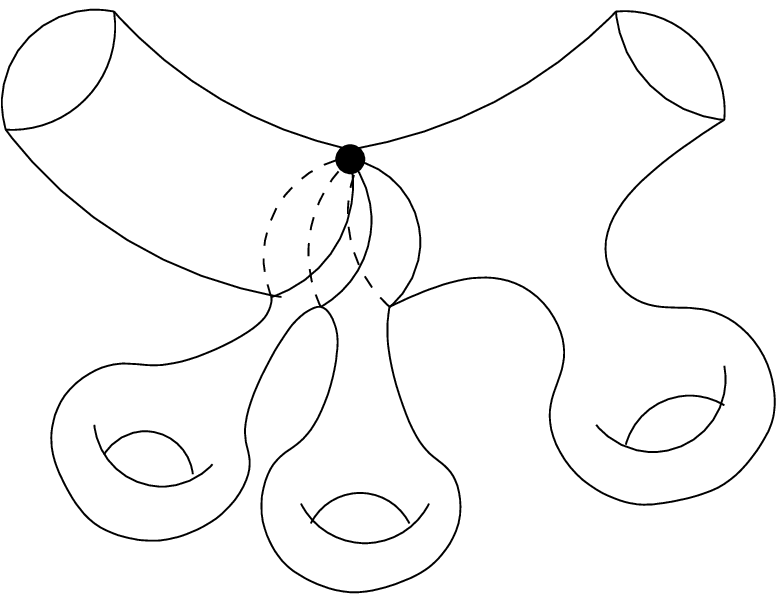}\end{center}
\caption{Block of surfaces  creating a zero of type $II$}\label{fig:type II}
\end{figure}

\begin{figure}[!htb]
\begin{center}\includegraphics[scale=0.4]{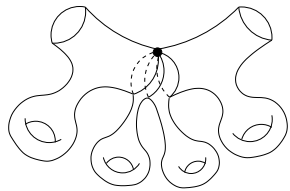}\end{center}
\caption{Block of surfaces  creating a zero of type $III$}\label{fig:type III}
\end{figure}

Counting angles, it is shown in~\cite{EMZ} that the order of the newborn zeros is as follows:
\begin{enumerate}
\item To create a zero of type $I$ one uses $k\geq 1$ figure eight boundaries that were created at zeros  of orders $a_1\geq 0$,\dots,$a_k\geq 0$ (a zero of order $0$ being a regular point). The zero then has order $\sum_{i=1}^k(a_i+2)$. All orders bigger or equal to $2$ are possible.
\item To create a zero of type $II$ one uses a figure eight boundary that was created at a zero of order $b'\geq 0$. If there is no figure eight boundary involved then the newborn zero has order $b'+1$.
  If there are $k\geq 1$ figure eight boundaries involved that were created at zeros of orders $a_1\geq 0$,\dots,$a_k\geq 0$ then the newborn zero has order $(b'+1)+\sum_{i=1}^k(a_i+2)$.
 All orders bigger or equal to $1$ are possible.
\item To create a zero of type $III$ we use two pair of holes boundaries created at zeros of orders $b_1'$ and $b_2''$. If there are no figure eight boundaries involved then the order of the newborn zero is $(b_1'+1)+(b_2''+1)$. If there are $k\geq 1$ figure eight boundaries involved that were created at zeros of orders $a_1\geq 0$,\dots,$a_k\geq 0$ then the newborn zero has order $(b'+1)+\sum_{i=1}^k(a_i+2)+(b_2''+1)$.
 All orders bigger or equal to $2$ are possible.
\end{enumerate}

For a given small $\gamma$ one can create all of the boundaries 
of the surfaces involved in the above construction by an appropriate figure eight surgery or a pair of holes surgery (see Figures~\ref{fig:figureeight} and~\ref{fig:twoholes}) such
that the boundaries all  have holonomy vector $\gamma$.

By arranging the blocks in a cyclic order and identify boundary components we create an admissible topological type of configuration of homologous saddle connections where each saddle connection is based at a newborn singularity of one of the three types. There is only the following obstruction: one needs to either use at least one surface with a pair of holes boundary or, if one only uses 
surfaces with figure eight boundaries, then one needs to use at least one cylinder.

Each cylinder is bounded by two saddle connections, so to have a one to one correspondence we think of the cylinder as being cut into two parts by the central waist curve. Each half cylinder then has a saddle connection $\gamma$ on the boundary (that is not the waist curve) that joins a saddle $P$ to itself. 
We say that $P$ accounts for this half-cylinder. A zero  of type $I$ (see Figure~\ref{fig:type I}) accounts for two half-cylinders (so for one cylinder), a zero of type $II$ (see Figure~\ref{fig:type II})
accounts for one half of a cylinder, and a zero of type
$III$ (see Figure~\ref{fig:type III}) does not account for any half cylinder.  
Note that only singularities of type $II$ can have order $1$.
It follows that if for $n\in\N$ we define 
\[
\chi(n)=
\begin{cases}
1/2,\ &\text{ if }n=1\\
1,\   &\text{ if }n>1
\end{cases}
\] then we have
$q_{\max}(d_1,\dots,d_k)\le \sum_{i=1}^m \chi(d_i)$. 
 To maximize the number of cylinders we construct the zeros  of orders greater than or equal to $2$ by zeros of type  $I$.
The other zeros of order $1$ must be created by zeros of type $II$ (coming from a cylinder followed by a torus with a pair of holes boundary). This works well if there is an even number 
of zeros of order $1$ in which case we constructed a configuration with $\sum_{i=1}^n \chi(d_i)$ cylinders. If there is an odd number of zeros of order $1$ then the construction of the zeros of order $1$ ends with a surface with a pair of holes boundary. In this case we are obliged to construct one of the zeros of order greater than $1$ by a surface of type $II$, so we constructed a configuration with 
$\sum_{i=1}^m \chi(d_i)-1/2$ cylinders. We showed
\begin{proposition}
\label{prop:bound:for:q:max}
Consider a stratum $\cH(\alpha)$, where $\alpha=(d_1,\dots,d_m)$ satisfies $d_i\geq 1$, for  $i=1,\dots,m$.
We have
\begin{equation}\label{eq:qmax}
q_{\max}(\alpha)= \left[\sum_{i=1}^n \chi(d_i)\right],
\end{equation}
where the square brackets denote the integer part of a number.
\end{proposition}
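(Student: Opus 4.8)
The plan is to prove the two bounds $q_{\max}(\alpha)\le\bigl[\sum_i\chi(d_i)\bigr]$ and $q_{\max}(\alpha)\ge\bigl[\sum_i\chi(d_i)\bigr]$ separately, in both cases exploiting the half-cylinder bookkeeping set up above. For the upper bound, let $\cC$ be any admissible topological type for $\cH(\alpha)$, assembled from blocks creating newborn zeros of types $I$, $II$, $III$. Cutting each of the $q(\cC)$ cylinders along its waist curve produces $2q(\cC)$ half-cylinders, and each half-cylinder carries exactly one boundary saddle connection based at one of the newborn zeros $P_1,\dots,P_m$. Since a type $I$ zero accounts for two half-cylinders, a type $II$ zero for one, and a type $III$ zero for none, $2q(\cC)$ equals the sum over the $P_i$ of the number of half-cylinders accounted for by $P_i$. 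As types $I$ and $III$ force order $\ge 2$, a zero of order $1$ must be of type $II$ and so accounts for at most one half-cylinder, whereas any zero accounts for at most two; hence $2q(\cC)\le 2\sum_{i=1}^m\chi(d_i)$. Dividing by $2$ and using that $q(\cC)\in\N$ gives $q(\cC)\le\bigl[\sum_i\chi(d_i)\bigr]$.

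\emph{Lower bound.} I would realise the bound by an explicit construction. Build each zero of order $d_i\ge 2$ from a block of type $I$ and each zero of order $1$ from a block of type $II$ of the simple shape ``a cylinder followed by a genus-one pair-of-holes torus''; this is possible by the order formulas recalled from~\cite{EMZ}. Arrange the blocks cyclically and glue boundary components: the figure-eight boundaries stay internal to their block, while the pair-of-holes ends coming from the type $II$ blocks must be identified in pairs. If the number $\ell$ of zeros of order $1$ is even, the $\ell$ type $II$ blocks pair off into $\ell/2$ cylinders, yielding $\sum_i\chi(d_i)$ cylinders in total, and the lone obstruction (at least one pair of holes, or else at least one cylinder) is met. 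If $\ell$ is odd then, $\sum_i d_i=2g-2$ being even, there is at least one zero of order $\ge 2$; build one such zero, say of order $d_1$, from a type $II$ block instead (take $b'=d_1-1\ge 1$ and no figure-eight surgery), so that the surplus pair-of-holes ends cancel, giving $\sum_i\chi(d_i)-\tfrac12$ cylinders. In either parity the realised (integer) number of cylinders equals $\bigl[\sum_i\chi(d_i)\bigr]$, which together with the upper bound yields \eqref{eq:qmax}.

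\emph{Main obstacle.} The arithmetic is light; the point that needs care is verifying that these blocks genuinely assemble into an \emph{admissible} configuration lying in exactly $\cH(\alpha)$ --- i.e. that the auxiliary genera and the orders feeding the figure-eight and pair-of-holes surgeries can always be chosen to reproduce the prescribed $d_i$ (the ``all orders $\ge 2$'', resp.\ ``$\ge 1$'', claims of~\cite{EMZ}), and that the cyclic gluing is compatible with the matching of boundary types. The parity split in the lower bound is the one genuinely fiddly step.
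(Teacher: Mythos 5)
Your proof is correct and follows essentially the same route as the paper: the upper bound via the half-cylinder accounting (type $I$ zeros give two half-cylinders, type $II$ one, type $III$ none, and order-one zeros are forced to be of type $II$), and the lower bound via the same explicit construction, realizing zeros of order at least $2$ by type $I$ blocks and order-one zeros by type $II$ blocks, with the identical parity fix of converting one higher-order zero to type $II$ when the number of order-one zeros is odd. The only difference is that you spell out the integrality step and the choice $b'=d_1-1$ slightly more explicitly than the paper does.
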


\begin{remark} The proposition says that it is  possible to find in each stratum $\cH(\alpha)$ a topological type of configuration $\cC$ such that $q(\cC)=q_{\max}(\alpha)$ is as stated.
It is not possible to find a topological type $\cC$ in each {\em connected component} of $\cH(\alpha)$ with $q_{\max}(\alpha)$ cylinders.
\end{remark}

We next show that given any connected component $K$ of a stratum $\cH(\alpha)$ and any admissible topological type $\cC$ of configuration for $K$, 
\[c_{\MeanArea}(K,\cC)\leq \frac{1}{3}.\]
\begin{proof}
Let $g\geq 2$. Denoting by $\ell(\alpha)$ the length of $\alpha$ we have 
\[c_{\MeanArea}(K,\cC)=
\frac{q(\cC)}{2g-2+\ell(\alpha)}\leq \cfrac{q_{\max}(\alpha)}{2g-2+\ell(\alpha)}.\]
We determine
\[
\max_{\alpha\in\Pi(2g-2)}\ \cfrac{q_{\max}(\alpha)}{2g-2+\ell(\alpha)},
\]
where $\Pi(2g-2)$ denotes the set of permutations of $2g-2$.

First let us prove that a partition $\alpha$ containing at least one entry $1$ is not maximizing. Indeed, if there is at least one {\em pair} of entries $1$ we can modify the initial partition $\alpha$ by replacing the elements $1,1$ with a single entry $2$. This does not change the value~\eqref{eq:qmax} of $q_{\max}(\alpha)$, but decreases the denominator in the ratio $q_{\max}/(2g-2+\ell(\alpha))$.

If there is a {\it single} entry $1$ in the partition $\alpha$, then $\sum_{i=1}^n \chi(d_i)$ is not an integer. We can modify $\alpha$ by deleting the entry $1$ and increasing some other entry by $1$. This operation does not change the value~\eqref{eq:qmax} of $q_{\max}(\alpha)$, but decreases the denominator in the ratio $q_{\max}/(2g-2+\ell(\alpha))$.

Thus we have proved that all of the entries of the partition maximizing the ratio $q_{\max}(\alpha)/(2g-2+\ell(\alpha))$ are greater then or equal to $2$. The formula~\eqref{eq:qmax} for such partitions simplifies to $q_{\max}(\alpha)= \ell(\alpha)$. Now note that for any $g\ge 2$ the function
\[
f_g(x)=\cfrac{x}{2g-2+x}=1-\cfrac{2g-2}{2g-2+x}
\] 
is strictly decreasing. Thus, among all partitions of $2g-2$ with entries strictly greater 
than one we have to chose the one maximizing $\ell(\alpha)$.
This is the partition $(2,\dots,2)$ where the order $2$ appears $g-1$ times. For this partition we get $q_{\max}(2,\dots,2)=g-1=\ell(\alpha)$  and so $c_{\MeanArea}(K,\cC)$ is bounded from above by $1/3$ as claimed.
\end{proof}

\begin{proof}[Proof of Theorem~\ref{prop:qmax:dim:ab}]
It suffices to show that for each genus $g$ there is an admissible topological type of configuration $\cC$ for a connected component $K$ of the stratum $\cH(2,\dots,2)$ ($g-1$ zeros of order $1$) such that $c_{\MeanArea}(K,\cC)$ attains the upper bound $1/3$.
 
 Note that for the  upper bound $1/3$ for $c_{\MeanArea}(K,\cC)$ we used $q_{\max}$.
As we explained in the proof of the relation~\eqref{eq:qmax}, the only way to obtain $q_{\max}(2,\dots,2)$ is to only use zeros of type $I$ (a cylinder followed by a torus with a figure eight boundary that was created at a regular point followed by a cylinder). Doing this we obtain a surface in $\cH(2,\dots,2)$ with a configuration that comes with $g-1$ cylinders. 

By Lemma 14.2 in~\cite{EMZ} the surface in $\cH(2,\dots,2)$ we constructed  that comes with a maximizing configuration has odd parity of spin structure, so this surface is in $\cH^{\odd}(2,\dots,2)$. (We recall in the next section the classification of connected components.)
 Proposition~\ref{prop:qmax:dim:ab} is proved.
 \end{proof}

\subsection{Configurations with simple complementary regions}\label{sec:simple-abelian} 

This section provides an answer to the following question of Alex Eskin and Alex Wright: is it possible to find in each connected component of a stratum an admissible topological type of configuration whose complementary regions are tori (with boundary) and cylinders. 
Motivations for this problem can be found in~\cite{W}.

The answer depends on the connected component, so we need to recall the  classification of connected components for  strata $\cH(\alpha)$ of Abelian differentials from~\cite{KZ}. Some connected components are characterized by the fact that they only contain hyperelliptic surfaces. For a surface $S$ in a stratum $\cH(d_1,\dots,d_n)$ where all $d_i$ are even one has the notion of {\em parity of spin structure} that is either $0$ or $1$.  We then have
\begin{theocite}[M.~Kontsevich, A. Zorich~\cite{KZ}]
Let $\cH(d_1,\dots,d_m)$ be a stratum of Abelian differentials on a surface of genus $g\geq 4$. 
The strata $\cH(2g-2)$ and $\cH(g-1,g-1)$ are the only strata to have a hyperelliptic component $\cH^{\hyp}(2g-2)$, resp. $\cH^{\hyp}(g-1,g-1)$.
Apart from these hyperelliptic components we have:
\begin{enumerate} 
\item If at least one of the $d_i$ is odd then there is only one non-hyperelliptic component. 
\item If all of the $\alpha_i$ are even then there are two non-hyperelliptic components,  $\cH^{\even}$ with even and $\cH^{\odd}$ with odd parity of spin structure.
\end{enumerate}
\end{theocite}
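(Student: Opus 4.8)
The statement to be established is the Kontsevich--Zorich classification, so the plan is to prove it in two halves. First I would show that the invariants appearing in the statement --- hyperellipticity and, when all $d_i$ are even, the parity of the spin structure --- are \emph{locally constant} on $\cH(d_1,\dots,d_m)$, which already partitions each stratum into at least the asserted number of pieces. Then I would prove the converse, i.e. that any two translation surfaces in the stratum carrying the same invariants can be joined by a path; this makes the count exact.

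For the lower bound the point is that each invariant is a deformation invariant. Hyperellipticity of a flat surface --- the underlying Riemann surface is hyperelliptic and the hyperelliptic involution $\tau$ satisfies $\tau^*\omega=-\omega$ --- cuts out a subset of the stratum that is closed and, on the two strata $\cH(2g-2)$ and $\cH(g-1,g-1)$, also open of the correct dimension; hence it is a union of connected components, and one checks it is in fact a single component by bringing any such surface to a canonical form adapted to $\tau$ and connecting these normal forms. For the spin parity: when all $d_i$ are even the divisor $\sum (d_i/2)P_i$ is a theta characteristic, and by the theorem of Atiyah and Mumford the parity $h^0(\sum (d_i/2)P_i)\bmod 2$ is constant in families; since the stratum is the total space of such a family, the parity is locally constant. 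Counting values gives three components for $\cH(2g-2)$ and $\cH(g-1,g-1)$ (hyperelliptic, even, odd), two for the other all-even strata, and one when some $d_i$ is odd.

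For the upper bound --- the real work --- I would argue by induction on the genus, using surgeries that alter the zero data in a controlled way: (i) colliding two zeros of orders $a,b$ into one zero of order $a+b$, and its inverse splitting, realizing paths between (parts of) $\cH(\dots,a,b,\dots)$ and $\cH(\dots,a{+}b,\dots)$; and (ii) ``bubbling a handle'' at a zero, which raises the genus by one and the order of that zero by two, so that everything reduces to the minimal stratum $\cH(2g-2)$ of the same genus, or, run backwards, to lower genus. Representing surfaces by Jenkins--Strebel differentials turns the combinatorial skeleton into a ribbon graph / generalized permutation, and connectedness of the stratum reduces to connectedness under the allowed moves of the associated combinatorial data, a Rauzy-class--type statement, all the while tracking how each move acts on the spin parity. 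The low-genus strata ($g=2,3$ and the $g=4$ base cases) are handled by direct inspection; the hypothesis $g\geq 4$ is exactly what makes the inductive step uniform.

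The main obstacle is this last step. One must pick a supply of surgeries rich enough that the induced action on the combinatorial data is transitive on each fixed-invariant set, yet controlled enough that each surgery visibly preserves hyperellipticity-versus-not and has a computable effect on the spin parity; keeping the bookkeeping of the spin structure consistent with handle-bubbling and zero-collision, and ruling out any further hidden invariant, is where the genuine difficulty lies --- which is why the original proof of Kontsevich and Zorich is long and delicate, and why here we simply invoke it.
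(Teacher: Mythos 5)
There is nothing in the paper to compare your attempt with: this statement is quoted as an external input (a \emph{theocite} attributed to Kontsevich--Zorich \cite{KZ}), used only to organize the case analysis in Proposition~\ref{prop:Alex} and Theorem~\ref{prop:qmax:dim:ab}; the paper never proves it. Your sketch does reproduce the architecture of the actual KZ argument: the lower bound via deformation-invariance of hyperellipticity (the locus with $\tau^*\omega=-\omega$ is closed, and open of full dimension exactly in $\cH(2g-2)$ and $\cH(g-1,g-1)$) and of the parity of the theta characteristic $\sum (d_i/2)P_i$ (Atiyah/Mumford), and the upper bound via collisions of zeros, bubbling handles, induction on genus, and connectivity of the combinatorial (Rauzy-class--type) data.

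As a proof, however, the second half is only a plan: transitivity of the chosen surgeries on each fixed-invariant locus, the computation of how each move acts on the spin parity, and the exclusion of any further invariant are precisely the content of the theorem, and you explicitly defer all of this to the original paper. That is a genuine gap if the goal were a self-contained proof, though it is the same move the paper itself makes by citing \cite{KZ}; just state clearly that you are invoking the classification rather than proving it. One small correction to your component count: $\cH(g-1,g-1)$ has three components (hyperelliptic, even, odd) only when $g-1$ is even; when $g-1$ is odd there is no spin invariant and the stratum has two components, the hyperelliptic one and a single non-hyperelliptic one, which is exactly how the statement above is phrased.
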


\begin{remark} The classification in~\cite{KZ} also covers $g=2,3$ but we will not need this.
\end{remark}

\begin{proposition}\label{prop:Alex}
 Let $\cH^{comp}(\alpha)$ denote a connected component of a stratum of Abelian differentials on a surface of genus $g\geq 5$.

Then: 
 \begin{enumerate}
 \item If $\cH^{comp}(\alpha)$ is  hyperelliptic (so $\cH^{\hyp}(2g-2)$ or, if $g-1$ is even, $\cH^{\hyp}(g-1,g-1)$) then it is not possible to find an admissible topological type of configuration whose complementary regions are only tori (with boundary) and cylinders.
 \item If $g$ is even and $\cH^{comp}(\alpha)=\cH^{even}(\alpha)$ then we can find a topological type of configuration  whose complementary regions are tori, cylinders and one surface of genus two. But it is not possible to only have tori and cylinders. 
\item In all remaining connected components one can explicitly construct an admissible topological type of configuration  whose complementary regions are tori and cylinders.
\end{enumerate}
\end{proposition}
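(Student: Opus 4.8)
The plan is to work throughout with the building-block description of admissible configurations recalled in section~\ref{sec:general-method} and section~\ref{sec:maximal-abelian}: a configuration is produced by choosing a (possibly disconnected) closed surface $S'$ in the principal boundary stratum $\cH(\alpha')$, performing figure eight and pair of holes surgeries at its zeros and marked points, inserting $q$ cylinders, and gluing the resulting boundary components in a single cyclic chain. Requiring that every complementary region be a torus with boundary or a cylinder amounts exactly to requiring that every connected component of $S'$ lie in $\cH(0)$ or $\cH(0,0)$. The two invariants of the assembled surface that must be controlled are its genus and, when $\alpha$ has only even entries, the parity of its spin structure; both are read off the combinatorics of the chain (how many tori, of which boundary type, with figure eights placed at which regular points, in which cyclic order), the first by an Euler characteristic count, the second by additivity of the Arf invariant under the two surgeries. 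Writing down these two formulas is the first step. A secondary combinatorial fact, already in \cite{EMZ} and used in section~\ref{sec:maximal-abelian}, is that tori with figure eight boundaries produce newborn zeros only of even order $\ge 2$, so that a newborn zero of odd order requires a type $II$ block and forces an odd entry of $\alpha$; in particular for an all-even $\alpha$ only blocks of types $I$ and $III$ are available.

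For part~(c) I would, for each connected component not excluded by (a) or (b), exhibit an explicit cyclic chain of tori and cylinders realizing the prescribed $\alpha$: a zero of order $d$ is created from $\lfloor d/2\rfloor$ tori with figure eight boundaries at regular points, inside a type $I$ block when $d$ is even and inside a type $II$ block when $d$ is odd, with the order-one entries grouped in pairs exactly as in the proof of Proposition~\ref{prop:bound:for:q:max}. When $\alpha$ has at least one odd entry, the Kontsevich--Zorich classification~\cite{KZ} recalled in section~\ref{sec:simple-abelian} leaves only one non-hyperelliptic component, so it is enough to realize $\alpha$ by such a chain and to check that the surface obtained is non-hyperelliptic, which is easy once several pieces are present and $g\ge 5$. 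When $\alpha$ is all even, I must in addition land in the prescribed spin component: I compute the parity of the chain from the torus contributions and, if it comes out wrong, flip it by substituting one short block for another (for instance inserting an extra figure eight torus), changing $\alpha$ in a controlled way.

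For part~(a) the essential input is the rigidity of the principal boundary of a hyperelliptic component (see \S~14 of~\cite{EMZ}): an admissible configuration for $\cH^{\hyp}(2g-2)$ or $\cH^{\hyp}(g-1,g-1)$ is invariant under the hyperelliptic involution $\sigma$, so cutting along it leaves either a single complementary surface globally fixed by $\sigma$, or a pair of isomorphic complementary surfaces exchanged by $\sigma$, together with at most one $\sigma$-invariant cylinder. An Euler characteristic count shows the complementary surface has genus $g-1$ in the first case and genus at least $\tfrac{g-1}{2}$ for each piece in the second; for $g\ge 5$ both are $\ge 2$, so no complementary region is a torus. For part~(b), the impossibility of an all-tori-and-cylinders configuration in $\cH^{\even}(\alpha)$ with $g$ even comes from comparing, for such a chain, the genus formula and the parity formula from the first step: both are affine modulo $2$ in the number of complementary tori (and the chain's combinatorics), and one verifies that ``$g$ even'' then forces odd parity. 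The affirmative part of (b) is then obtained by replacing one torus of such a chain by a genus-two complementary surface with figure eight boundary (in $\cH(2)$ or $\cH(1,1)$): this raises the genus by one and shifts the parity, restoring even parity while still realizing $\alpha$.

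The step I expect to be the real obstacle, in all the affirmative subcases, is the simultaneous bookkeeping: the blocks must be arranged so that the genus, the \emph{ordered} multiset $\alpha$ of zero orders, and --- in the all-even case --- the spin parity all come out correctly at once, over the various subcases (few versus many order-one entries, $g$ even versus odd, prescribed parity even versus odd). Part~(a), by contrast, is essentially an Euler characteristic count once the structural description of \cite{EMZ} has been quoted.
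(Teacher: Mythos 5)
Your route is the same as the paper's in outline: part (a) via the structure of configurations on hyperelliptic components from \S 14 of \cite{EMZ}, and the constructive parts via cyclic chains of type $I$/$II$/$III$ blocks, with the odd entries of $\alpha$ handled by type $II$ blocks (in even number). Part (a) and the case of $\alpha$ with an odd entry are fine. The genuine gap is in the all-even case, which is exactly where (b) and the remaining half of (c) live. The crux in the paper is structural: if all $d_i$ are even and only tori and cylinders are used, no type $II$ zero can occur, and one cannot mix type $I$ and type $III$ zeros without forcing a type $II$ zero in between; hence the chain is either entirely of type $I$ or entirely of type $III$, and by \S 14.1--14.2 of \cite{EMZ} the spin parity is odd in the first case and equals the parity of $g-1$ in the second. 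This dichotomy is what simultaneously gives the even-spin construction for odd $g$ and the impossibility in (b) for even $g$. Your substitute --- a ``parity formula affine mod $2$ in the number of complementary tori (and the chain's combinatorics)'' --- does not do this work: the genus count forces $\sum_i g_i=g-1$, so the number of tori is fixed and the parity can only vary through the block types; controlling that variation (i.e.\ proving the no-mixing dichotomy and computing the two parities) is precisely the argument you have not supplied, and it is the heart of (b).

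The explicit moves you propose to adjust the parity also fail inside a fixed stratum. Inserting an extra figure-eight torus raises the order of the corresponding newborn zero by $2$ and the genus by $1$, so it changes $\alpha$; since $\alpha$ is prescribed, ``changing $\alpha$ in a controlled way'' is not an available option --- the legitimate flip is the global switch between the all-type-$I$ and all-type-$III$ chains. Likewise, in the affirmative part of (b), a genus-two piece that ``raises the genus by one \dots while still realizing $\alpha$'' is self-contradictory ($\sum d_i=2g-2$); the replacement must keep $\sum_i g_i=g-1$ and leave every newborn order unchanged, which means trading the genus-two piece against torus genus (the paper replaces a pair-of-holes torus in the all-type-$III$ chain by a genus-two surface with a pair-of-holes boundary), and the parity shift to the parity of $g$ then has to be verified by the \S 14.1 computation, not merely asserted. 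Note also that $\cH(1,1)$ cannot serve as the genus-two piece when $\alpha$ is all even: its odd zeros would either appear as odd entries of $\alpha$ or, if used as surgery points, change the orders of the adjacent newborn zeros. As written, your all-even constructions do not land in the prescribed stratum and component.
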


\begin{proof}
First note that a configuration containing only tori and cylinders, or tori, cylinders, and at most one genus two surface does not occur in hyperelliptic strata, since a hyperelliptic surface can contain at most two closed homologous saddle connections, which rules out $g\geq 5$. Lemma $14.5$ in~\cite{EMZ} describes precisely configurations in hyperelliptic components of strata.

We suppose for what follows that $\cH^{comp}(\alpha)$ is not a hyperelliptic component.

Recall the description of singularities of types $I$, $II$, and $III$ from the previous section. If apart from cylinders, we only use surfaces of genus $1$, then we must do figure eight surgeries and pair of holes surgeries at regular marked points. So we have:
\begin{enumerate}
\item A zero of type $I$  has order $2k$, where $k\geq 1$ is the number of tori with figure eight boundaries (see Figure~\ref{fig:type I}).  
\item A zero of type $II$ has order $2k+1$, where  $k\geq 0$ is the number of tori with figure eight boundaries (see figure~\ref{fig:type II}).  
  \item A zero of type $III$ has order $2k+2$, where  $k\geq 0$ is the number of tori with figure eight boundaries (see figure~\ref{fig:type III}).
 \end{enumerate}
 Assume that $\alpha$ contains at least one odd $d_i$, so $\alpha=(2a_1,\cdots,2a_p,2b_1+1,\cdots, 2b_r+1)$ with $p\geq 0$ and $r\geq 1$ ($p=0$ corresponds to the case when all $\alpha_i$ are odd). 

We construct blocs of surfaces that contain zeros of type $II$ of orders  $2b_1+1$,\dots, $2b_r+1$.
 Note that since $\sum_i d_i$ is even, the number $r$ of odd $d_i$ is even, so in our construction the first and last surface is a cylinder. If there is at least one zero of even order then we construct in addition blocs of surfaces that contain zeros of type $I$ of orders $2a_1$,\dots,$2a_p$. For this construction the first and last surface is also a cylinder.  
Arranging the blocs in a cyclic order and identifying cylinders we get in each case an admissible topological type of configuration for $\cH(\alpha)$ whose complementary regions are cylinders and tori. 
 
 Suppose now that all $d_i$ in $\cH(\alpha)$ are even, so we can not have a zero of type $II$. We then either have only zeros of type $I$ or only zeros of type $III$ as having zeros of types $I$ and $III$ necessarily implies that we have a zero of type $II$.
 
It is easy to verify ($\S 14.1$ in~\cite{EMZ}) that if we only have zeros of type $I$ then the corresponding surface has an odd parity of spin structure and if we only only have zeros of type $III$ then the parity of the resulting surface is the parity of $g-1$. So for $g$ odd we are done, but for even $g$ the parity of the spin structure is $1$ in both cases. 
 
If we only have zeros of type $III$ but replace one of the pair of holes boundary tori with a genus $2$ surface with a pair of holes boundary then the parity of the spin structure of the resulting surface is the parity of $g$. So for even $g$ we constructed a surface of parity $0$. This completes the proof.
  \end{proof}
  
%___________________________________________________________  
\section{Toolbox}\label{sec:toolbox}
We recall some well known facts about the Beta function and incomplete Beta function.

The (real) {\em Gamma function} is defined for each $t>0$ by
\[
\Gamma(t)=\int_0^\infty e^{-u}u^{t-1}\dd u.
\]
It satisfies 
\begin{equation}\label{eq:gamma}
\Gamma(t)=(t-1)\Gamma(t-1),\quad\text{so for $n\in\N$,}\quad\Gamma(n)=(n-1)!.
\end{equation}

The \emph{Beta function} defined for real numbers $a,b>0$ by
\begin{equation}\label{def:beta}
\B(a,b)=\int_0^1u^{a-1}(1-u)^{b-1} \dd u
\end{equation}
satisfies for positive real numbers $a,b$ and positive integers $n,m$ 
\begin{equation}\label{eq:beta}
\B(a,b)=\frac{\Gamma(a)\Gamma(b)}{\Gamma(a+b)}\qquad\qquad
\B(n,m)=\frac{(n-1)!(m-1)!}{(n+m-1)!}.
\end{equation}

The \emph{Incomplete Beta function} defined for real positive numbers $t,a,b$ by 
\begin{equation}\label{def:incomplete-beta}
\B(t;a,b)=\int_0^tu^{a-1}(1-u)^{b-1} \dd u
\end{equation}
satisfies
\[
\B(t;a,b)=\B(a,b)\sum_{k=a}^{a+b-1}{a+b-1\choose k}t^k(1-t)^{a+b-1-k}.
\]

The {\em regularized incomplete Beta function} is defined as
\[
 \I(t;a,b)=\frac{\B(t;a,b)}{\B(1;a,b)}=\frac{\B(t;a,b)}{\B(a,b)}.
 \]
 
 See figure~\ref{fig:graphesbis} for the density function $f(t)=\frac{d}{dt}\I(t;a,b)$ for various values for $a$ and $b$. See~\cite{D} for a historical development of the incomplete Beta function.

\begin{figure}[!htb]
\begin{center}\includegraphics[scale=0.5]{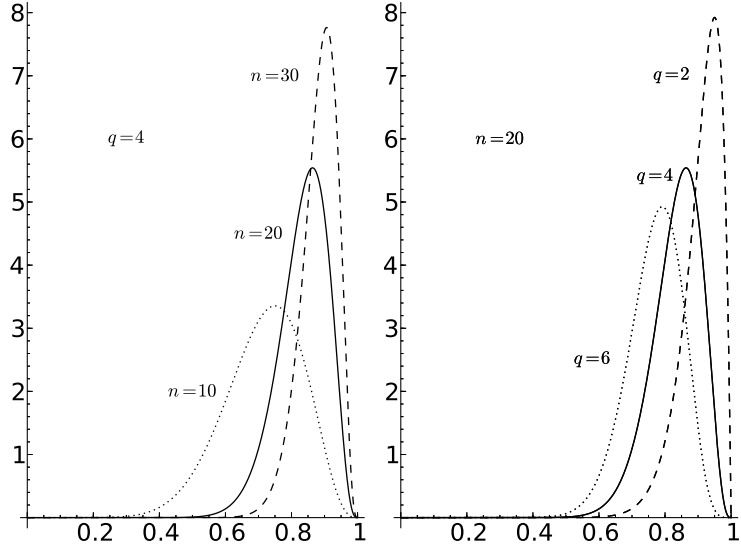}\end{center}
\caption{\label{fig:graphesbis} Graphs of the density function 
$f(t)=\frac{d}{dt}\I(t;a,b)$}
\end{figure}

\begin{lemma}\label{lem:combi1}
\begin{eqnarray}
I(A,B)=\sum_{k=0}^ B (-1)^ k{A+B\choose A+k}={A+B-1\choose B}\label{eq:tool1}\\
\tilde{I}(A,B)=\sum_{k=0}^{B}k(-1)^{k+1}{A+B\choose A+k}={A+B-2\choose B-1}\label{eq:tool2}
\end{eqnarray}
\end{lemma}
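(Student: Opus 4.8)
The plan is to reduce both \eqref{eq:tool1} and \eqref{eq:tool2} to the single elementary identity
\[
\sum_{j=0}^{m}(-1)^j\binom{n}{j}=(-1)^m\binom{n-1}{m},\qquad n\ge 1,\ m\ge 0,
\]
which is the partial-sum refinement of the vanishing of the full alternating binomial sum. I would establish it in one line by coefficient extraction: $\sum_{j=0}^{m}(-1)^j\binom{n}{j}$ is the coefficient of $x^m$ in $\frac{1}{1-x}(1-x)^n=(1-x)^{n-1}$, and that coefficient equals $(-1)^m\binom{n-1}{m}$. (Alternatively, a short induction on $m$ using Pascal's rule $\binom{n}{j}=\binom{n-1}{j}+\binom{n-1}{j-1}$ telescopes to the same conclusion.)

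For \eqref{eq:tool1}, I would use the symmetry $\binom{A+B}{A+k}=\binom{A+B}{B-k}$ together with the substitution $j=B-k$, which rewrites $I(A,B)$ as $(-1)^B\sum_{j=0}^{B}(-1)^j\binom{A+B}{j}$; applying the auxiliary identity with $n=A+B$ and $m=B$ gives $(-1)^B(-1)^B\binom{A+B-1}{B}=\binom{A+B-1}{B}$, as claimed.

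For \eqref{eq:tool2}, the same symmetry and substitution $j=B-k$ (so $k=B-j$) turn $\tilde I(A,B)$ into $-(-1)^B\bigl[\,B\sum_{j=0}^{B}(-1)^j\binom{A+B}{j}-\sum_{j=0}^{B}j(-1)^j\binom{A+B}{j}\,\bigr]$. The first inner sum is evaluated by the auxiliary identity exactly as above. For the second I would use the absorption relation $j\binom{A+B}{j}=(A+B)\binom{A+B-1}{j-1}$ to reduce it, after an index shift, to a further instance of the auxiliary identity (now with upper index $A+B-1$). Collecting the two contributions yields $\tilde I(A,B)=(A+B)\binom{A+B-2}{B-1}-B\binom{A+B-1}{B}$, and the proof finishes with one more absorption step, $B\binom{A+B-1}{B}=(A+B-1)\binom{A+B-2}{B-1}$, which leaves exactly $\binom{A+B-2}{B-1}$.

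There is no genuine obstacle here; the only care needed is in the bookkeeping of the signs and in the two index shifts ($j=B-k$ versus the shift $j\mapsto j-1$ coming from the absorption identity), together with a quick check of the degenerate cases $B=0,1$ so that the right-hand sides are read correctly.
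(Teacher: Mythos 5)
Your proof is correct, but it follows a different route from the paper. The paper introduces the polynomial versions $I_x(A,B)=\sum_{k=0}^B(-x)^k\binom{A+B}{A+k}$ and $\tilde I_x(A,B)=\sum_{k=0}^B k(-x)^k\binom{A+B}{A+k}$, observes the recurrence $I_x(A,B+1)-(1-x)I_x(A,B)=\binom{A+B}{A-1}$, and gets \eqref{eq:tool1} by setting $x=1$ (so the $(1-x)$ term drops out); \eqref{eq:tool2} is then obtained by differentiating the recurrence in $x$, using $\tilde I_x=xI_x'$, and again setting $x=1$. You instead reduce everything to the partial alternating sum identity $\sum_{j=0}^m(-1)^j\binom{n}{j}=(-1)^m\binom{n-1}{m}$ (proved by coefficient extraction from $(1-x)^n/(1-x)=(1-x)^{n-1}$), after reversing the summation index via the symmetry $\binom{A+B}{A+k}=\binom{A+B}{B-k}$, and you handle the weight $k$ in \eqref{eq:tool2} by the absorption identity $j\binom{A+B}{j}=(A+B)\binom{A+B-1}{j-1}$ rather than by differentiation; the final simplification $B\binom{A+B-1}{B}=(A+B-1)\binom{A+B-2}{B-1}$ is checked directly and your intermediate expression $(A+B)\binom{A+B-2}{B-1}-B\binom{A+B-1}{B}$ is right. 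What your approach buys is that every step is an explicitly verified standard identity, whereas the paper's proof is shorter but leans on an asserted recurrence and a differentiation trick; the only care your version needs, as you note, is the sign bookkeeping in the index reversal and the degenerate cases (e.g.\ $B=0$, where $\binom{A+B-2}{B-1}=\binom{A-2}{-1}=0$, and the requirement $A+B\ge 1$, resp.\ $A+B\ge 2$, for the auxiliary identity, which is harmless in the range where the lemma is applied).
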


\begin{proof} Let $ I_x(A,B)=\sum_{k=0}^ B (-x)^ k{A+B\choose A+k}$\quad and\quad $\tilde{I}_x(A,B)=\sum_{k=0}^B k(-x)^k{A+B\choose A+k}$.

We have the recurrence relation $I_x(A,B+1)-(1-x)I_x(A,B)={A+B\choose A-1}$. Taking $x=1$ we get~\eqref{eq:tool1}.

Taking the derivative of the recurrence relation with respect to $x$ we get $\tilde{I}_x(A,B)=xI'_x(A,B)$ and hence $\tilde{I}_x(A,B+1)=-x{I}_x(A,B)+(1-x)\tilde{I}_x(A,B)$. Taking $x=1$ we get~\eqref{eq:tool2}.
\end{proof}

\begin{lemma}\label{lem:combi2} For $q\geq 0$ and $l\leq q$ the value of \[\hat{I}(n,q,l)=\sum_{k=0}^{q+1}{n+q+1\choose n+k} {k\choose q+1-l}(-1)^{l+k+q+1}\] is
\[\hat{I}(n,q,l)={n+l-1\choose l}.\]
\end{lemma}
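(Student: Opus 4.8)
The plan is to reduce the double-indexed sum $\hat I(n,q,l)$ to the simpler alternating binomial sums already established in Lemma~\ref{lem:combi1}, by swapping the roles of the summation and expanding ${k\choose q+1-l}$ appropriately, or — more cleanly — by a generating-function argument. First I would set $j=q+1-l$, so that $0\le j\le q+1$, and rewrite
\[
\hat I(n,q,l)=(-1)^{l+q+1}\sum_{k=0}^{q+1}(-1)^k{n+q+1\choose n+k}{k\choose j}.
\]
The key observation is that $\sum_{k}(-1)^k{n+q+1\choose n+k}{k\choose j}x^{k-j}$ is, up to sign and reindexing, the $j$-th derivative (divided by $j!$) of a polynomial of the shape $\sum_k(-1)^k{n+q+1\choose n+k}x^k$, which by the binomial theorem is essentially $x^{-n}(1-x)^{n+q+1}$ restricted to the relevant range of exponents. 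So the natural approach is: identify the generating polynomial, differentiate $j$ times, and evaluate at $x=1$.

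Concretely, I would start from the identity $(1-x)^{N}=\sum_{i=0}^{N}(-1)^i{N\choose i}x^i$ with $N=n+q+1$, substitute $i=n+k$ to get $\sum_{k=-n}^{q+1}(-1)^{n+k}{N\choose n+k}x^{n+k}$, and then truncate: since we only sum $k$ from $0$ to $q+1$, the sum $\sum_{k=0}^{q+1}(-1)^k{N\choose n+k}x^{n+k}$ differs from $(-1)^n(1-x)^N$ by the low-order terms $k=-n,\dots,-1$, i.e. by a polynomial all of whose terms have degree $<n$ in $x$. Dividing by $x^n$, the quantity $P(x):=\sum_{k=0}^{q+1}(-1)^k{N\choose n+k}x^{k}$ equals $(-1)^n(1-x)^N/x^n$ plus a Laurent polynomial with only negative powers of $x$ (degrees $-n$ through $-1$). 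Applying $\frac{1}{j!}\frac{d^j}{dx^j}$ and evaluating at $x=1$: on the negative-power part every term $x^{-s}$ with $1\le s\le n$ contributes $\frac{1}{j!}(-1)^j s(s+1)\cdots(s+j-1)$, while on the main part I would use the Leibniz rule on $(1-x)^N x^{-n}$ and note that, because $N=n+q+1>q\ge j$ (here I use $l\le q$, hence $j=q+1-l\ge 1$ and also $j\le q+1\le N$), enough factors of $(1-x)$ survive so that the main part contributes $0$ at $x=1$ unless we differentiate $(1-x)^N$ fewer than... — actually the surviving contribution at $x=1$ comes only from terms where all $j$ derivatives hit $x^{-n}$, giving $(-1)^n\cdot \binom{-n}{j}j!\cdot$ (value of $(1-x)^N$ at $1$) $=0$ since $N\ge 1$. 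Hence only the negative-power remainder contributes, and one reads off $\frac{1}{j!}P^{(j)}(1)=(-1)^j\binom{n+j-1}{j}=(-1)^j\binom{n+j-1}{n-1}$ after resumming $\sum_{s=1}^{n}\binom{s+j-1}{j}=\binom{n+j}{j+1}$... I would double-check this last combinatorial identity (hockey-stick) and the bookkeeping of which $k$-values are dropped; that bookkeeping is where sign errors will creep in.

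Finally, combining the prefactor $(-1)^{l+q+1}$ with $(-1)^j=(-1)^{q+1-l}$ gives $(-1)^{l+q+1}(-1)^{q+1-l}=(-1)^{2(q+1)}=1$, leaving $\hat I(n,q,l)=\binom{n+j-1}{j}$ with $j=q+1-l$, and then re-expressing $\binom{n+q-l}{q+1-l}=\binom{n+l-1}{l}$ would require $n+q-l=(n+l-1)+(q+1-2l)$, which only matches when... — so in fact I suspect the cleanest route avoids the derivative computation and instead proves the identity by induction on $l$ (base case $l=0$ is exactly \eqref{eq:tool1} with $A=n$, $B=q+1$, giving $\binom{n+q}{q+1}=\binom{n+q}{n-1}$; hmm), or by a direct substitution into Vandermonde. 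The main obstacle, and the step I would spend the most care on, is matching the final closed form $\binom{n+l-1}{l}$: I would verify it on small cases ($n=1$; $l=0$; $l=q$) to pin down the correct normalization before committing to a proof strategy, and I expect the honest proof is a short induction on $q$ (or on $l$) using the Pascal recurrence ${n+q+1\choose n+k}={n+q\choose n+k}+{n+q\choose n+k-1}$ together with the recurrences in the proof of Lemma~\ref{lem:combi1}, rather than the generating-function manipulation sketched above.
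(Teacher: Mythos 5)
Your proposal does not close, and the two routes you sketch both have concrete problems. In the generating-function computation, when you write $P(x)=\sum_{k=0}^{q+1}(-1)^k\binom{N}{n+k}x^k$ as $(-1)^n(1-x)^N x^{-n}$ minus the truncated low-order part, the discarded terms are $\sum_{k=-n}^{-1}(-1)^k\binom{N}{n+k}x^{k}$: each negative power $x^{-s}$ comes with the coefficient $(-1)^{-s}\binom{N}{n-s}$, which you drop when you ``read off'' the contribution as a plain hockey-stick sum $\sum_{s=1}^{n}\binom{s+j-1}{j}$. Carried out correctly, the derivative evaluation gives $\hat I(n,q,l)=\sum_{s=1}^{n}(-1)^{s+1}\binom{n+q+1}{n-s}\binom{s+j-1}{j}$ with $j=q+1-l$, which is an alternating sum still in need of an identity, not the closed form $\binom{n+j-1}{j}$ you arrive at; and indeed that closed form contradicts the lemma (for $n=2$, $q=2$, $l=1$ it gives $\binom{3}{2}=3$ while $\hat I=2=\binom{2}{1}$), which is the inconsistency you yourself ran into. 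The fallback you then gesture at (induction using Pascal's rule) is the right idea but is not carried out, and your proposed base case is wrong: for $l=0$ the factor $\binom{k}{q+1}$ kills every term except $k=q+1$, so $\hat I(n,q,0)=1=\binom{n-1}{0}$ directly; it is not an instance of~\eqref{eq:tool1}.

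For comparison, the paper's proof is exactly the short induction you suspected exists, but organized differently: applying Pascal's recurrence to $\binom{n+q+2}{n+k}$ shows that $\hat I(n,q+1,l)=\hat I(n,q,l)$, i.e.\ $\hat I$ is independent of $q$ for $q\ge l$; one then evaluates at the endpoint $q=l$, where $\binom{k}{q+1-l}=\binom{k}{1}=k$ and the sum becomes $\tilde I(n,l+1)$, which~\eqref{eq:tool2} of Lemma~\ref{lem:combi1} evaluates to $\binom{n+l-1}{l}$. So the missing ideas in your write-up are precisely (i) the $q$-independence step via Pascal, and (ii) the identification of the reduced case $q=l$ with $\tilde I$ rather than with a base case at $l=0$. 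As it stands, your proposal is an exploration that ends without a proof, so it cannot be accepted as a verification of the lemma.
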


\begin{proof} Using the recurrence relation on binomial coefficients one obtains $\hat{I}(n,q+1,l)=\hat{I}(n,q,l)$. So we have $\hat{I}(n,q+1,l)=\hat{I}(n,l,l)$. Noting that $\hat{I}(n,l,l)=\tilde I(n,l+1)$ we get the desired result.
\end{proof}

\begin{lemma}\label{lem:combi4} The incomplete Beta function satisfies
\[\B(1-x,n,q)=(1-x)^n\B(n,q)\sum_{l=0}^{q-1}{n+l-1\choose l}x^l.\]
\end{lemma}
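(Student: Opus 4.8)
The plan is to express both sides in terms of the ordinary Beta function and reduce the identity to a binomial-coefficient identity that is already available in the toolbox, namely Lemma~\ref{lem:combi2}. First I would start from the definition $\B(1-x;n,q)=\int_0^{1-x}u^{n-1}(1-u)^{q-1}\,\dd u$ and apply the substitution $u=(1-x)v$, so that the integral becomes $(1-x)^n\int_0^1 v^{n-1}\bigl(1-(1-x)v\bigr)^{q-1}\,\dd v$. Expanding $\bigl(1-(1-x)v\bigr)^{q-1}$ by the binomial theorem and integrating term by term in $v$ gives a finite sum of the shape $(1-x)^n\sum_j \binom{q-1}{j}(-1)^j\frac{(1-x)^j}{n+j}$. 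This already has the factor $(1-x)^n$ out front and a polynomial of degree $q-1$ in $(1-x)$ inside; the remaining task is purely algebraic: rewrite this polynomial in powers of $x$ rather than powers of $1-x$, and identify the coefficient of $x^l$ with $\B(n,q)\binom{n+l-1}{l}$.

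The cleaner route, which I would actually carry out, is to avoid the substitution and instead expand $(1-u)^{q-1}$ directly under $\int_0^{1-x}$, then use $\int_0^{1-x}u^{n+k-1}\,\dd u=\frac{(1-x)^{n+k}}{n+k}$, or better, integrate by writing $u^{n-1}(1-u)^{q-1}$ and changing variable to $t=1-u$ so the upper limit interacts nicely with $x$. Whichever bookkeeping is chosen, one arrives at an expression for $\B(1-x;n,q)$ as $(1-x)^n$ times a polynomial in $x$ of degree $q-1$, and one must check that the coefficient of $x^l$ equals $\B(n,q)\binom{n+l-1}{l}$. Here is where I expect to invoke the combinatorial lemmas: after collecting, the coefficient of $(1-x)^n x^l$ will be a signed sum of terms $\binom{n+q+1}{n+k}\binom{k}{q+1-l}(-1)^{\ast}$ (or a close variant), which is exactly $\hat I(n,q,l)$ from Lemma~\ref{lem:combi2}, evaluated to $\binom{n+l-1}{l}$; the global normalization $\B(n,q)=\frac{(n-1)!(q-1)!}{(n+q-1)!}$ comes out of the factorials produced by the term-by-term integration.

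The main obstacle is purely organizational rather than conceptual: matching the indices in the finite sum produced by term-by-term integration to the precise form of $\hat I(n,q,l)$ in Lemma~\ref{lem:combi2}, including getting the signs and the shift between ``powers of $1-x$'' and ``powers of $x$'' exactly right. A convenient sanity check along the way is to verify the two extreme cases: at $x=0$ the identity must reduce to $\B(n,q)=\B(n,q)$ (so the alternating sum of the $\binom{n+l-1}{l}$ with the binomial weights from $(1-(1-x))^{\,\cdot}$ must telescope to $1$), and the leading term as $x\to 1$ must reproduce $(1-x)^n\binom{n+q-1}{n}$, consistent with the remark following Theorem~\ref{th:big-area}. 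Once the index-matching is pinned down, the proof is a two-line appeal to Lemma~\ref{lem:combi2} together with~\eqref{eq:beta} and~\eqref{eq:gamma}.
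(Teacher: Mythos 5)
Your overall plan (expand the integrand, integrate term by term, and compare coefficients of $x^l$) can be made to work, but the one step you delegate to the toolbox is precisely the step that fails as stated. Expanding $(1-u)^{q-1}$ under $\int_0^{1-x}$ gives
\[
\B(1-x;n,q)=(1-x)^n\sum_{j=0}^{q-1}\binom{q-1}{j}\frac{(-1)^j}{n+j}\,(1-x)^j,
\]
and after rewriting $(1-x)^j$ in powers of $x$ the coefficient of $x^l$ becomes
\[
(-1)^l\sum_{j=l}^{q-1}\binom{q-1}{j}\binom{j}{l}\frac{(-1)^j}{n+j}.
\]
Because of the factors $1/(n+j)$ this is \emph{not} a sum of the shape $\hat I(n,q,l)$: Lemma~\ref{lem:combi2} is a purely binomial identity and does not apply to it directly, so "a two-line appeal to Lemma~\ref{lem:combi2}" is not available here. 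What you actually need is the (elementary, but different) identity $\sum_{i=0}^{m}\binom{m}{i}\frac{(-1)^i}{a+i}=\B(a,m+1)$ (expand the Beta integral), which together with $\binom{q-1}{j}\binom{j}{l}=\binom{q-1}{l}\binom{q-1-l}{j-l}$ evaluates the coefficient to $\binom{q-1}{l}\B(n+l,q-l)=\B(n,q)\binom{n+l-1}{l}$, as required. So the route is viable, but you must supply this identity explicitly rather than cite Lemma~\ref{lem:combi2}.

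For comparison, the paper avoids the denominators altogether: it starts from the finite binomial-sum representation recorded in the toolbox, $\B(t;a,b)=\B(a,b)\sum_{k=a}^{a+b-1}\binom{a+b-1}{k}t^k(1-t)^{a+b-1-k}$ (obtained, e.g., by repeated integration by parts), sets $t=1-x$, factors out $(1-x)^n$, and reorganizes in powers of $x$; the resulting coefficient sums are pure binomial sums to which Lemma~\ref{lem:combi2} applies (with shifted parameters). If you first integrate by parts to reach that representation, your argument becomes the paper's proof; if you keep the direct term-by-term integration, replace the appeal to Lemma~\ref{lem:combi2} by the Beta-integral identity above. Your sanity checks at $x=0$ and $x\to 1$ are fine and consistent with the remark after Theorem~\ref{th:big-area}.
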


\begin{proof}
\begin{align*}
&\sum_{k=n}^{n+q-1}{n+q-1\choose k}(1-x)^kx^{n+q-1-k}\\
=\quad& (1-x)^n\sum_{k=0}^{q-1}{n+q-1\choose n+k}(1-x)^{n+k}x^{q-1-k}\\
 =\quad&             (1-x)^n\sum_{k=0}^{q-1}
        \sum_{j=0}^k{n+q-1\choose n+k}{k\choose j}(-1)^{k-j}x^{q-1-j}\\
 =\quad&       (1-x)^n\sum_{l=0}^{q-1}
      \left[\sum_{k=0}^{q-1}{n+q-1\choose n+k}{k\choose q-1-l}(-1)^{k+l+q+1}\right]x^l
\end{align*}
It suffices to apply Lemma~\ref{lem:combi2}
\end{proof}

\begin{acknowledgement}
We would like to thank Anton Zorich for having initiated the present work. He asked most of the questions and initiated some answers. We thank Alex Wright for the formulation of the problem of section~\ref{sec:simple-abelian}, and for some comments on typos. We thank the anonymous referee(s) for the careful reading of the manuscript.   Both authors thank ANR ``GeoDyM'' for financial support.
\end{acknowledgement}

\bibliographystyle{amsalpha}

\end{document}